\journal{Advances in Mathematics}
\DeclareMathAlphabet{\pazocal}{OMS}{zplm}{m}{n}
\newtheorem{defi}{\textbf{Definition}}[section]
\newtheorem{teo}[defi]{\textbf{Theorem}}
\newtheorem{cor}[defi]{\textbf{Corollary}}
\newtheorem{prop}[defi]{\textbf{Proposition}}
\newtheorem{lem}[defi]{\textbf{Lemma}}
\newtheorem{rem}[defi]{\textbf{Remark}}
\newcommand{\dbl}{[\hspace{-0.2ex}[}
\newcommand{\dbr}{]\hspace{-0.2ex}]}
\newcommand{\db}[1]{\dbl {#1} \dbr}
\newcommand{\res}[1]{\hspace{-0.6mm}\downarrow_{\hspace{-0.25mm}{#1}}}
\newcommand{\ind}[1]{\hspace{-0.6mm}\uparrow^{\hspace{-0.25mm}{#1}}}
\newcommand{\ctens}{\widehat{\otimes}}
\newcommand{\iso}{\cong}
\newcommand{\invlim}{\underleftarrow{\textnormal{lim}}\,}
\newcommand{\ds}{\raisebox{0.5pt}{\,\big|\,}}
\newcommand{\tn}[1]{\textnormal{#1}}
\begin{document}

\begin{frontmatter}

\title{Block theory and Brauer's first main theorem for profinite groups\tnoteref{Block theory and Brauer's first main theorem for profinite groups}} 



\author[ufmg]{Ricardo J. Franquiz Flores\corref{cor1}\fnref{fn1}}
\ead{rfranquiz@ufmg.br}
\author[ufmg]{John W. MacQuarrie}
\ead{john@mat.ufmg.br}

\cortext[cor1]{Corresponding author}
\fntext[fn1]{The author was financially supported by a CAPES doctoral scholarship.}

\address[ufmg]{Universidade Federal de Minas Gerais, Belo Horizonte, Brazil}

\begin{abstract}
We develop the local-global theory of blocks for profinite groups. Given a field $k$ of characteristic $p$ and a profinite group $G$, one may express the completed group algebra $k\db{G}$ as a product $\prod_{i\in I}B_i$ of closed indecomposable algebras, called the blocks of $G$.  To each block $B$ of $G$ we associate a pro-$p$ subgroup of $G$, called the defect group of $B$, unique up to conjugacy in $G$.  We give several characterizations of the defect group in analogy with defect groups of blocks of finite groups.  Our main theorem is a version of Brauer's first main theorem: a correspondence between the blocks of $G$ with defect group $D$ and the blocks of the normalizer $\tn{N}_G(D)$ with defect group $D$.
\end{abstract}

\begin{keyword}
 block theory \sep modular representation theory \sep profinite group\sep defect group \sep Brauer's first main theorem. \MSC[2020]{20C20, 20E18} 
\end{keyword}

\end{frontmatter}


\section{Introduction}
\label{sec:intro}
Let $G$ be a finite group and $k$ a field of characteristic $p$. The algebra $kG$, as with any finite dimensional associative algebra, has a unique decomposition as a direct product of indecomposable algebras $kG = B_1\times \cdots \times B_n$, the $B_i$ being known in group representation theory as the \emph{blocks} of $G$.  Rather than try to deal with $kG$ all at once, the philosophy of block theory is to study the blocks of $G$ individually, as the representation theory of certain $B_i$ may be considerably more tractable than the representation theory of $kG$.

The difficulty of a block $B$ is measured by a $p$-subgroup of $G$ (unique up to $G$-conjugacy) called the \emph{defect group} of $B$.  As examples of this ``measure'': blocks with cyclic defect group are completely understood, while there are structural conjectures about blocks whose defect group is abelian that are false for arbitrary blocks, perhaps the most famous being Brou\'e's Abelian Defect Group Conjecture (for a light-hearted survey of the state-of-the-art in modular representation theory and block theory see \cite{Craven}).

The most fundamental theorem of the block theory of finite groups is Brauer's first main theorem (see \cite[Theorem 5]{brauer1944} for the original statement or for instance \cite[Theorem 6.2.6]{B} for a more modern one), which gives a natural bijection between the blocks of $G$ with defect group $D$ and the blocks of the normalizer $\tn{N}_G(D)$ of $D$ in $G$ with defect group $D$.  Our main theorem is Brauer's first main theorem for profinite groups:

\begin{teo}
Let $k$ be a field of characteristic $p$, $G$ a profinite group and $D$ a closed subgroup of $G$ that is open in a $p$-Sylow subgroup that contains it. There is a natural bijection between the blocks of $G$ over $k$ with defect group $D$, and the blocks of $\tn{N}_{G}(D)$ over $k$ with defect group $D$.
\end{teo}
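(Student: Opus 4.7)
The overall plan is to realize the correspondence as an inverse limit of classical Brauer correspondences on finite quotients $G/U$, where $U$ runs through a cofinal system of open normal subgroups of $G$. By earlier material in the paper, one expects that a block of $G$ is determined by a compatible family of blocks of the $k[G/U]$ under the projections $k[G/U'] \onto k[G/U]$ whenever $U \subseteq U'$, and similarly that its defect group can be recovered from the defect groups of those finite blocks. So the first step is to set up that dictionary precisely and to restrict to a cofinal subsystem of open normal subgroups $U$ for which $DU/U$ is a $p$-subgroup of $G/U$ and for which $\tn{N}_{G/U}(DU/U) = \tn{N}_G(D)U/U$; the hypothesis that $D$ is open in a Sylow pro-$p$ subgroup containing it is exactly what guarantees cofinality here, since then $D$ intersects each small enough $U$ in a subgroup that is open in $D \cap U$, and the image $DU/U$ stabilizes to a finite $p$-subgroup of $G/U$ detecting $D$.

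Next, I would identify, for each admissible $U$, the blocks of $k[G/U]$ whose defect group is precisely $DU/U$. By the classical Brauer first main theorem at the finite level, these are in natural bijection with the blocks of $k[\tn{N}_{G/U}(DU/U)]$ with defect group $DU/U$. Using the identification $\tn{N}_{G/U}(DU/U) = \tn{N}_G(D)U/U$, these latter blocks are blocks of $k[\tn{N}_G(D)/(U \cap \tn{N}_G(D))]$ with defect group $DU/U$. So at each finite stage $U$ one has a canonical bijection between the appropriate blocks of $k[G/U]$ and those of $k[\tn{N}_G(D)/(U \cap \tn{N}_G(D))]$.

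The third step is to assemble these bijections into a bijection between blocks of $k\db{G}$ with defect group $D$ and blocks of $k\db{\tn{N}_G(D)}$ with defect group $D$, by passing to the inverse limit. This requires naturality: the Brauer correspondence on finite groups is compatible with the surjections $G/U \onto G/U'$ in a suitable sense, so that a compatible system of blocks on the $G$-side corresponds to a compatible system on the $\tn{N}_G(D)$-side. Once naturality is in place, each block $B$ of $G$ with defect group $D$ determines, via its image $B_U$ in each $k[G/U]$, a block $b_U$ of $k[\tn{N}_G(D)/(U \cap \tn{N}_G(D))]$, and the inverse limit of the $b_U$ is the desired block $b$ of $\tn{N}_G(D)$; the inverse direction is analogous.

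I expect the main obstacle to be verifying two technical compatibilities simultaneously. The first is the cofinality claim: that sufficiently small open normal $U$ satisfy both $\tn{N}_{G/U}(DU/U) = \tn{N}_G(D)U/U$ and the property that the defect group of the block $B_U$ of $k[G/U]$ corresponding to $B$ is exactly $DU/U$ rather than some strictly larger $p$-subgroup. The hypothesis on $D$ should feed precisely into this, but the deduction is delicate because defect groups can in principle jump as one passes to a quotient. The second is naturality of the classical Brauer correspondence under the quotient maps $G/U \onto G/U'$, restricted to those blocks whose defect group is controlled by $D$; this is essentially a compatibility statement for Brauer homomorphisms with respect to quotients, and once proved it makes the final inverse-limit assembly routine.
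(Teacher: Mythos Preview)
Your approach is genuinely different from the paper's. The paper does \emph{not} assemble the bijection as an inverse limit of finite Brauer correspondences. Instead it mimics the classical finite-group proof directly at the profinite level: it defines the Brauer correspondent $b^G$ via the condition $f = f\cdot\tn{Br}_D(e)$ on idempotents, and then uses a purpose-built modified trace $\widetilde{\tn{Tr}}_D^G$ (Definition~\ref{def:modifiedtrace}) together with the key technical Lemma~\ref{lem:Brdiagramcommutes} --- a commutativity statement relating $\tn{Br}_D$ and $\widetilde{\tn{Tr}}_D^G$ --- to show that the correspondent has the right defect group and that the map is bijective. The inverse-limit machinery appears only in the supporting lemmas (e.g.\ Corollary~\ref{cor:defgrpsG/N-blocksX_N}), not in the correspondence itself.

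Your strategy is plausible, and one of your worries is unfounded: once $U$ is small enough that $D$ is a Sylow $p$-subgroup of $DU$ (which is cofinal by the openness hypothesis), a Frattini argument gives $\tn{N}_G(DU) = \tn{N}_G(D)\,U$, so $\tn{N}_{G/U}(DU/U) = \tn{N}_G(D)U/U$ as you want. The defect-group stability you need is exactly Corollary~\ref{cor:defgrpsG/N-blocksX_N}. However, two points are genuinely incomplete. First, ``the image $B_U$ of $B$ in $k[G/U]$'' is not a block: $\varphi_U(e)$ is a central idempotent that may decompose, and you must use the specific inverse system of blocks $X_U$ from Proposition~\ref{prop:invlimblocks} rather than a naive image. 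Second, and more seriously, the naturality of the finite Brauer correspondence under the maps $G/U\to G/U'$ is precisely where all the difficulty lives. Proving it amounts to showing that the Brauer homomorphisms $\tn{Br}_{DU/U}$ are compatible with the $\varphi_{U'U}$ in a way that respects block idempotents on both the $G$-side and the $\tn{N}_G(D)$-side; the paper establishes exactly this compatibility inside the proof of Lemma~\ref{lem:Brdiagramcommutes}, and it is delicate (the kernel of $\tn{Br}_{DU/U}$ does not obviously map into the kernel of $\tn{Br}_{DU'/U'}$ without the index arguments carried out there). So your route is not shorter: it relocates the hard analysis from the profinite trace to a naturality lemma of comparable difficulty, and you have not yet supplied that lemma.
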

The hypothesis that $D$ be open in a $p$-Sylow subgroup is necessary, but is not a restriction because we will show that the defect group of every block of $G$ has this property.

More broadly, we begin a systematic study of the local-global theory of blocks of a profinite group $G$ over a field $k$ of characteristic $p$.  We define the defect group of a block of $G$, prove its basic properties and give several alternative characterizations (Theorem \ref{theorem:CompleteDefectGroupChar}), before proving Brauer's first main theorem.  

The most striking feature of the discussion is the generality that block theory permits: 
all results apply to arbitrary profinite groups.  This suggests that the block theoretic approach to the representation theory of profinite groups will be as central to the theory as it is with finite groups.  For comparison, we note that with modules, conditions on the group are frequently required to prove the most powerful results -- see for instance \cite{J1,J}.

Because we do not demand that the field $k$ be finite, our algebras and modules are not compact, but they are \emph{pseudocompact}, thus maintaining a weaker version of compactness (linear compactness) that suffices for our purposes.  While pseudocompact objects are perhaps less familiar than compact ones, the advantage of allowing this greater generality is undeniable, as it gives a representation theorist access to algebraically closed fields.

In a future paper, the tools developed here will be used to provide a complete classification of the blocks of a profinite group having (finite or infinite) cyclic defect group, in close analogy with the classification of blocks of finite groups with cyclic defect group.

The background to the material discussed in this article is well served by the literature.  For an introduction to profinite groups see \cite{ZR} or \cite{W}.  For basic properties of pseudocompact algebras see \cite{Bru} or \cite{PPJ}.   For introductory expositions on the modular representation theory of finite groups see \cite{B} or \cite{A}.  For an excellent and encyclopedic introduction to the block theory of finite groups see \cite{Lin1, Lin2}.

\section*{Acknowledgements:}

We thank Jeremy Rickard, Peter Symonds and Pavel Zalesskii for helpful conversations, and the anonymous referee for pertinent comments.

\section{Preliminaries}
\label{sec:prelim}
\subsection{Pseudocompact algebras and modules}

Let $k$ be a discrete topological field of characteristic $p$.  In what follows, when the coefficient ring is unspecified it is assumed to be $k$ (so for example ``algebra'' means ``$k$-algebra'').  Throughout this section, unless otherwise specified, all modules are topological left modules. For a general introduction to pseudocompact objects, see \cite{Bru}.

\begin{defi}
	A \emph{pseudocompact algebra} is an associative, unital, Hausdorff topological $k$-algebra $A$ possessing a basis of $0$ consisting of open ideals $I$ having cofinite dimension in $A$ that intersect in 0 and such that $A=\varprojlim_{I}A/I$.
\end{defi}

Equivalently, a pseudocompact algebra is an inverse limit of discrete finite dimensional algebras in the category of topological algebras.  If $G$ is a profinite group, an inverse system of finite continuous quotients $G/N$ of $G$ induces an inverse system of finite dimensional algebras $k[G/N]$, whose inverse limit $k\db{G}$, the \emph{completed group algebra} of $G$, is a pseudocompact algebra.  

We will occasionally make use of the Jacobson radical $J(A)$ of a pseudocompact algebra $A$: that is, the intersection of the maximal open left ideals of $A$.  Essentially the only property we require of the Jacobson radical is the easily checked fact that if $\rho:A\to B$ is a surjective homomorphism of pseudocompact algebras, then $\rho(J(A))\subseteq J(B)$ (cf.\ \cite[\S 1]{Bru} for more on $J(A)$).

If $A$ is a pseudocompact algebra, a \emph{pseudocompact $A$-module} is a topological $A$-module $U$ possessing a basis of 0 consisting of open submodules $V$ of finite codimension that intersect in 0 and such that $U=\varprojlim_{V} U/V$. The category of pseudocompact modules for a pseudocompact algebra has exact inverse limits.

If $V$ is a topological vector space, cosets of closed subspaces of $V$ are called \emph{closed linear varieties}.  We say that $V$ is \emph{linearly compact} if for every family $\mathcal{F}=\{W_{i}\mbox{ : } i\in I\}$ of closed linear varieties of $V$ with the finite intersection property, we have that $\bigcap_{i\in I}W_{i}\neq\emptyset$.

\begin{lem}\label{JK-lem}
	Let $A$ be a pseudocompact algebra and $U,V$ pseudocompact $A$-modules.
	\begin{enumerate}
		\item The module $U$ is linearly compact.
		\item If ever $\rho: U\to V$ is a continuous homomorphism, then $\rho(U)$ is linearly compact and hence closed in $V$.
		\item The submodule abstractly generated by a finite subset of $U$ is closed.
		\item If $U$ is finitely generated as an $A$-module, then every $A$-module homomorphism $U\to V$ is continuous.
	\end{enumerate}
\end{lem}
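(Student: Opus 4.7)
My plan is to handle parts (1)--(3) as routine consequences of the theory of linearly compact modules, reserving part (4) as the main substantive statement. For part (1), express $U = \varprojlim_V U/V$ as an inverse limit over the open submodules $V$ of finite codimension. Each $U/V$ is a finite-dimensional discrete $k$-vector space, which is linearly compact by induction on dimension. Since linear compactness is preserved by arbitrary products (the analogue of Tychonoff's theorem) and by closed subspaces, and $U$ sits as a closed submodule of $\prod_V U/V$, it is linearly compact. For part (2), a direct check shows the continuous image $\rho(U)$ is linearly compact: preimages of a family of closed linear varieties in $\rho(U)$ with the finite intersection property form a corresponding family in $U$ with the same property, and any common point of the latter maps to a common point of the former. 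Closure in $V$ then follows because, if $v \in V \setminus \rho(U)$, the intersections $(v + W) \cap \rho(U)$ as $W$ ranges over open submodules of $V$ would themselves have the finite intersection property, forcing $v$ to lie in $\rho(U)$. Part (3) is immediate: the map $\pi\colon A^n \to U$ sending $(a_1,\ldots,a_n) \mapsto \sum a_i u_i$ is continuous by continuity of the module action, and its image is the abstract submodule generated by the $u_i$, which is closed by part (2).

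For part (4), let $f\colon U \to V$ be an abstract $A$-module homomorphism, fix generators $u_1,\ldots,u_n$ of $U$, and let $\pi\colon A^n \onto U$ be the continuous surjection from part (3). The composite $f\pi$, being $(a_i) \mapsto \sum a_i f(u_i)$, is continuous by the module axioms, so it suffices to show $\pi$ is a topological quotient map. Since $\ker\pi$ is closed in $A^n$, this reduces to showing the induced continuous bijection $\bar\pi\colon A^n/\ker\pi \to U$ is open. Given any open submodule $W$ of $A^n/\ker\pi$, it is also closed (open submodules of a topological module are closed), hence itself pseudocompact and thus linearly compact; part (2) then gives that $\bar\pi(W)$ is closed in $U$. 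Moreover $\bar\pi$ descends to an abstract isomorphism $(A^n/\ker\pi)/W \iso U/\bar\pi(W)$, so $U/\bar\pi(W)$ is a finite-dimensional Hausdorff quotient of the pseudocompact module $U$. A chain argument on the defining open submodules of $U/\bar\pi(W)$, which must stabilize in dimension, shows any such quotient is discrete, so $\bar\pi(W)$ is open in $U$. Hence $\bar\pi$ is a homeomorphism, $\pi$ is a quotient map, and continuity of $f$ follows from that of $f\pi$.

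The main obstacle is part (4): upgrading the abstract surjection $\pi$ to a topological quotient map is a form of the open mapping theorem for pseudocompact modules. This rests critically on the linear compactness established in part (1), which is what allows us to conclude that $\bar\pi(W)$ is closed in $U$ despite the absence of any direct topological control over $\bar\pi$.
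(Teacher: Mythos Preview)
Your argument is correct. The paper itself does not prove this lemma: it simply cites \cite[Lemma 2.2]{JK} for parts (1)--(3) and \cite[Proposition 3.5]{VanGas} for part (4). So your self-contained treatment is necessarily a different route, and the comparison is just ``citation versus direct proof''.

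One remark on part (4): you effectively reprove, in this special case, the fact recorded immediately afterwards in the paper as Lemma~\ref{lem:propertiesofLCspaces}(4), namely that a continuous linear bijection between linearly compact spaces is a homeomorphism. Once you know $A^n/\ker\pi$ and $U$ are both linearly compact (which you have from part (1) and the fact that quotients by closed submodules remain pseudocompact), that lemma gives you $\bar\pi$ is a homeomorphism in one line, bypassing the chain argument on open submodules of $U/\bar\pi(W)$. Your hands-on argument is fine and has the virtue of being self-contained, but if you are writing this up in a context where Lemma~\ref{lem:propertiesofLCspaces} is already available, invoking it is cleaner.

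A small point of presentation in part (2): your closure argument is stated as ``if $v\in V\setminus\rho(U)$, the intersections $(v+W)\cap\rho(U)$ would have the finite intersection property'', which reads slightly backwards. What you mean is: if $v$ lies in the \emph{closure} of $\rho(U)$, then each $(v+W)\cap\rho(U)$ is a nonempty closed linear variety in $\rho(U)$, the family has the finite intersection property, and linear compactness forces a common point, which must equal $v$ since $\bigcap W=0$; hence $v\in\rho(U)$. The mathematics is right, just tighten the phrasing.
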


\begin{proof}
	Items 1 to 3 are \cite[Lemma 2.2]{JK} and Item 4 is \cite[Proposition 3.5]{VanGas}.
\end{proof}

We collect here several properties of linearly compact vector spaces, given in \cite[II, \S6.27]{L}, which will be used frequently:

\begin{lem}\label{lem:propertiesofLCspaces}
	\begin{enumerate}
		\item Discrete finite dimensional $k$-vector spaces are linearly compact. 
		\item A product of linearly compact $k$-vector spaces is linearly compact.
		\item Let $\{V_{i},\varphi_{ij},I\}$ be an inverse system of topological vector spaces and for each $i$, let $W_{i}$ be a non-empty closed linear variety in $V_{i}$.  If $\varphi_{ij}(W_{i})\subseteq W_{j}$, so that $\{W_{i},\varphi_{ij},I\}$ forms an inverse system, then its inverse limit is non-empty. 
		\item A continuous linear bijective map between two linearly compact spaces is an isomorphism.
	\end{enumerate}
\end{lem}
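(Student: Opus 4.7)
For item 1, I would observe that in a discrete finite dimensional vector space every subset is closed, so closed linear varieties reduce to affine subspaces. The finite intersections of a family with the finite intersection property form a downward-directed collection of non-empty affine subspaces of bounded dimension; a minimal-dimension member must equal every intersection below it, hence equals the whole intersection, which is therefore non-empty.

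For item 2, I would adapt the classical Tychonoff argument. Given a FIP family $\mathcal{F}$ of closed linear varieties in $\prod_i V_i$, enlarge it by Zorn's lemma to a maximal such family $\mathcal{M}$; maximality makes $\mathcal{M}$ closed under finite intersection and shows it contains every closed linear variety meeting each of its members. For each $i$, the closures $\overline{\pi_i(M)}$ with $M \in \mathcal{M}$ form a FIP family of closed linear varieties in $V_i$, so by linear compactness they share a point $v_i$. To verify that $(v_i) \in \bigcap \mathcal{M}$, I would take a basic closed linear variety neighborhood $(v_i) + \prod_i O_i$ of $(v_i)$ (with $O_i$ an open subspace of $V_i$ for finitely many $i$ and $O_i = V_i$ otherwise); each cylinder $\pi_{i_k}^{-1}(v_{i_k} + O_{i_k})$ meets every $M$ because $v_{i_k} \in \overline{\pi_{i_k}(M)}$, hence lies in $\mathcal{M}$ by maximality, so the whole neighborhood lies in $\mathcal{M}$, and closedness of each $M$ then forces $(v_i) \in M$. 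I expect this to be the main obstacle: making maximality interact correctly with the linear structure requires the care familiar from the classical Tychonoff proof.

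For item 3, I would realize the inverse limit as $\prod_i W_i \cap \bigcap_{i \ge j} E_{ij}$ inside $\prod_i V_i$, where $E_{ij} = \{(v_k) : \varphi_{ij}(v_i) = v_j\}$ is a closed linear subspace. By item 2 the ambient product is linearly compact, so it suffices to check the finite intersection property; this is immediate from directedness of $I$ by picking a common upper bound $k$ for the indices involved, choosing any $w_k \in W_k$, setting $w_j = \varphi_{kj}(w_k) \in W_j$ for $j \le k$, and extending arbitrarily elsewhere.

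For item 4, continuity of $f^{-1}$ reduces to showing that $f$ sends every open subspace $U \le V$ to a neighborhood of $0$ in $W$. I would first note that open subspaces of a linearly compact Hausdorff space have finite codimension, because the quotient is discrete and (as a continuous image) linearly compact, hence finite dimensional by item 1. Then $U$ is also closed, hence linearly compact, so $f(U)$ is linearly compact in $W$; a linearly compact subspace of a Hausdorff space is closed, by applying linear compactness of the subspace to the family of translates of basic open neighborhoods intersected with the subspace. Thus $W/f(U)$ is a finite dimensional linearly compact Hausdorff space over the discrete field $k$, and a minimal-dimension argument among its basic open subspaces at $0$ forces $\{0\}$ to be open, so $f(U)$ is open in $W$ as required.
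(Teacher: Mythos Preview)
The paper does not actually prove this lemma: it simply records the four statements and attributes them to Lefschetz \cite[II, \S 6.27]{L}. So there is no argument in the paper to compare yours against, and your sketches already go well beyond what the paper supplies.

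Your arguments are essentially sound. Two small points are worth flagging. In item~3 you invoke item~2 on $\prod_i V_i$, which requires the $V_i$ themselves to be linearly compact; the paper's stated hypothesis (``inverse system of topological vector spaces'') omits this, and without it the conclusion is false --- take all $V_i$ equal to the countable direct sum $k^{(\mathbb{N})}$ with the discrete topology, constant transition maps, and $W_i$ the affine subspace of vectors whose first $i$ coordinates equal $1$. In every application the paper makes of item~3 the ambient spaces are pseudocompact, hence linearly compact, so your added assumption is both necessary and harmless; you might just make it explicit. In item~4 you justify ``open subspaces have finite codimension'' by saying the discrete quotient is linearly compact ``hence finite dimensional by item~1'', but item~1 is the opposite implication. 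What you need is the (easy) converse, that a discrete linearly compact space is finite dimensional; the minimal-dimension trick you deploy at the end of item~4 for $W/f(U)$ in fact proves exactly this, so the ingredients are all present --- only the cross-reference is backwards.
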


Several times in this section we will give results for pseudocompact algebras and cite for the proof a reference whose statement is for profinite algebras (that is, compact rather than just linearly compact algebras). When we do this, what we mean is that the proof in the given reference goes through without change for pseudocompact algebras.

\subsection{Induction and restriction}

The following definition makes use of the \emph{completed tensor product} (cf.\ \cite[\S 2.2]{PPJ}).  Let $G$ be a profinite group and $H$ a closed subgroup of $G$. If $V$ is a pseudocompact $k\db{H}$-module, then the \emph{induced $k\db{G}$-module} is defined by $V\uparrow^{G}=k\db{G}\widehat{\otimes}_{k\db{H}}V$, with action from $G$ on the left factor. If $U$ is a $k\db{G}$-module, then the \emph{restricted $k\db{H}$-module} $U\downarrow_{H}$ is the original $k\db{G}$-module $U$ with coefficients restricted to the subalgebra $k\db{H}$. Induction is left (but not right) adjoint to restriction \cite[\S 2.2]{PPJ}. If $H$ is a closed subgroup of $G$, denote by $k\db{G/H}$ the left pseudocompact permutation $k\db{G}$-module given as the inverse limit of the modules $k[G/HN]$, as $N$ runs through the open normal subgroups of $G$ (cf.\ \cite[\S 2.3]{PPJ}).

\begin{lem}(\cite[Proposition 5.8.1]{ZR})\label{lem:isoind/res}
	Let $G$ be a profinite group, $H$ a closed subgroup of $G$ (denoted $H\leq_{C}G$) and $U$ a pseudocompact $k\db{G}$-module. The map $g\ctens_{k\db{H}} u\mapsto gH\ctens_k gu$ induces a natural continuous
	isomorphism of left $k\db{G}$-modules
	$$k\db{G}\widehat{\otimes}_{k\db{H}}U\cong k\db{G/H}\widehat{\otimes}_{k}U,$$
	
	where the action of $k\db{G}$ on $k\db{G/H}\widehat{\otimes}_{k}U$ is diagonal.
\end{lem}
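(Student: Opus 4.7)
The plan is to build the isomorphism at two levels: first in the setting of finite groups, where it is classical, then pass to the profinite setting by taking inverse limits and appealing to the linear compactness provided by Lemma \ref{JK-lem} and Lemma \ref{lem:propertiesofLCspaces}.

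First I would verify the formula for a finite group $\bar G$ with subgroup $\bar H$ and $k\bar G$-module $\bar U$: the assignment $\varphi_0:g\otimes_{k\bar H}u\mapsto g\bar H\otimes_k gu$ is balanced over $k\bar H$, since $gh\otimes u\mapsto gh\bar H\otimes g(hu) = g\bar H\otimes g(hu)$, and the assignment $\psi_0:g\bar H\otimes u\mapsto g\otimes_{k\bar H}g^{-1}u$ is independent of coset representative, since $gh\otimes u\mapsto gh\otimes (gh)^{-1}u = g\otimes h h^{-1}g^{-1}u = g\otimes g^{-1}u$. A direct computation gives $\psi_0\varphi_0=\id$ and $\varphi_0\psi_0=\id$, and both maps intertwine the $\bar G$-action on the source with the diagonal action on the target, because $g_0\cdot(g\otimes u)=g_0g\otimes u\mapsto g_0g\bar H\otimes g_0gu$, which equals $g_0\cdot(g\bar H\otimes gu)$ under the diagonal action.

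Second I would extend to the pseudocompact setting by viewing each side as an inverse limit. Writing $U=\varprojlim_{V}U/V$ over the open $k\db{G}$-submodules $V$ of $U$, and taking $N$ over the open normal subgroups of $G$ chosen so that the $k\db{H}$-action on $U/V$ factors through $H/(H\cap N)$, we have by definition of the completed tensor product
\[
k\db{G}\widehat{\otimes}_{k\db{H}}U \;\cong\; \varprojlim_{N,V} k[G/N]\otimes_{k[H/(H\cap N)]}(U/V)
\]
and similarly $k\db{G/H}\widehat{\otimes}_{k}U\cong\varprojlim_{N,V}k[G/HN]\otimes_{k}(U/V)$. The finite-level isomorphisms $\varphi_0$, applied termwise, are compatible with the structure maps of both inverse systems (since both the canonical surjection $G/N\to G/N'$ and the coset projection $G/N\to G/HN$ are $G$-equivariant and the formula $\varphi_0$ is defined by an explicit group-theoretic recipe). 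Taking the limit yields a $k\db{G}$-linear bijection $\varphi$ realized on elementary tensors by the stated formula $g\ctens u\mapsto gH\ctens gu$.

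Third, continuity of $\varphi$ is immediate from its description as an inverse limit of maps between discrete modules; alternatively it follows from the continuity of multiplication in $k\db{G}$ and of the quotient map $G\to G/H$. Since both source and target are pseudocompact, hence linearly compact by Lemma \ref{JK-lem}(1), the continuous bijection $\varphi$ is automatically a topological isomorphism by Lemma \ref{lem:propertiesofLCspaces}(4). The only real obstacle is the bookkeeping in the second step: one must exhibit a cofinal system of pairs $(N,V)$ for which the restricted action on $U/V$ really does factor through $H/(H\cap N)$, and verify that the finite-level isomorphisms commute with the structure maps. This is routine but must be done carefully to ensure naturality in $U$.
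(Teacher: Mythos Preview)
The paper does not supply its own proof of this lemma: it simply records the statement and defers to \cite[Proposition 5.8.1]{ZR}. Your argument is correct and is precisely the standard route one expects (and that the cited reference takes): verify the elementary-tensor formula for finite quotients, observe that these finite-level isomorphisms are compatible with the structure maps of the two inverse systems, and pass to the limit. The appeal to Lemma~\ref{lem:propertiesofLCspaces}(4) is a convenient shortcut, though strictly unnecessary since you already have the inverse map $\psi_0$ at each finite level and hence at the limit. The one place to be a little more careful is your claim that the displayed expression for $k\db{G}\widehat{\otimes}_{k\db{H}}U$ holds ``by definition'': the definition of the completed tensor product gives an inverse limit over open sub-bimodules, and one must check that the pairs $(N,V)$ with $N$ acting trivially on $U/V$ yield a cofinal system in that indexing---exactly the bookkeeping you flag at the end.
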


\begin{lem}\label{lem:invmodind/red}
	Let $G$ be a profinite group, $H\leq_{C}G$ and $U$ a pseudocompact $k\db{G}$-module. Then $U\res{H}\ind{G}\cong\varprojlim_{N}U\res{HN}\ind{G}$.
\end{lem}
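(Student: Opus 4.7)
The plan is to transport the statement to the $\ctens_k$ setting via Lemma \ref{lem:isoind/res}, exploit the definition $k\db{G/H}=\varprojlim_N k[G/HN]$, and then invoke the fact that completed tensor products of pseudocompact modules commute with inverse limits.

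First I would rewrite both sides using the natural isomorphism of Lemma \ref{lem:isoind/res}. The left-hand side becomes $U\res{H}\ind{G}\cong k\db{G/H}\ctens_k U$. For each open normal subgroup $N$ of $G$, the subgroup $HN$ is open in $G$, so $G/HN$ is finite and $k\db{G/HN}=k[G/HN]$; applying the same lemma gives $U\res{HN}\ind{G}\cong k[G/HN]\ctens_k U$, naturally in $N$. Thus the statement reduces to producing a natural isomorphism
$$k\db{G/H}\ctens_k U\;\cong\;\varprojlim_N\bigl(k[G/HN]\ctens_k U\bigr).$$
Since, by definition, $k\db{G/H}=\varprojlim_N k[G/HN]$ as pseudocompact $k$-modules, the problem is exactly the commutativity of $\ctens_k$ with inverse limits of pseudocompact $k$-modules in the first variable.

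To establish this last point I would unwind the definition of the completed tensor product: for pseudocompact modules $V$ and $U$, one has $V\ctens_k U=\varprojlim_{V',U'}(V/V')\otimes_k(U/U')$, where $V'$, $U'$ range over open submodules of finite codimension. When $V=\varprojlim_N V_N$ with each $V_N$ pseudocompact, a cofinal system of open submodules $V'$ is obtained as preimages of open submodules of the $V_N$ under the structural maps $V\to V_N$. Substituting this cofinal system and interchanging the two inverse limits (which is automatic in a category with exact inverse limits, as is the case for pseudocompact modules) yields
$$V\ctens_k U\;=\;\varprojlim_N\varprojlim_{V_N',U'}(V_N/V_N')\otimes_k(U/U')\;=\;\varprojlim_N\bigl(V_N\ctens_k U\bigr).$$
Naturality in $N$ is built into the construction, and the canonical comparison map from $V\ctens_k U$ to $\varprojlim_N(V_N\ctens_k U)$ is continuous with dense image, so by Lemma \ref{lem:propertiesofLCspaces}(4) a continuous bijection between linearly compact spaces is an isomorphism.

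I expect the main obstacle to be purely bookkeeping: verifying that open submodules of the pseudocompact inverse limit $\varprojlim_N k[G/HN]$ really are cofinally the kernels of the projections to $k[G/HN]$, and that the interchange of inverse limits in the double index $(N,U')$ is legitimate. Once this is made explicit, the desired isomorphism, together with naturality in $U$ and compatibility with the transition maps induced by $HN_1\subseteq HN_2$ for $N_2\subseteq N_1$, falls out directly from the two applications of Lemma \ref{lem:isoind/res}.
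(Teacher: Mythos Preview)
Your proposal is correct and follows essentially the same route as the paper: the paper's proof is a one-line citation of \cite[Proposition 5.2.2, Lemma 5.5.2, Proposition 5.8.1]{ZR}, which are precisely the three ingredients you unpack---the natural isomorphism of Lemma \ref{lem:isoind/res}, the identification $k\db{G/H}=\varprojlim_N k[G/HN]$, and the commutation of the completed tensor product with inverse limits. Your extra remark about dense image and Lemma \ref{lem:propertiesofLCspaces}(4) is unnecessary once the interchange of inverse limits has been justified, but it does no harm.
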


\begin{proof}
	This follows from \cite[Proposition 5.2.2, Lemma 5.5.2, Proposition 5.8.1]{ZR}.
\end{proof}

\subsection{Homomorphisms and coinvariants}

Let $U$ be a pseudocompact $k\db{G}$-module and $N$ a closed normal subgroup of $G$.  The module of \emph{$N$-coinvariants} $U_{N}$ is defined as $k\widehat{\otimes}_{k\db{N}} U\cong U/I_{N}U$, where $I_{N}$ denotes the augmentation
ideal of $k\db{N}$ -- that is, the kernel of the continuous map $k\db{N}\to k$ sending $n\mapsto 1$ for each $n\in N$. The action of $G$ on $U_N$ is given by $g(\lambda\widehat{\otimes}u) = \lambda\widehat{\otimes}gu$.  The subgroup $N$ acts trivially on $U_N$, so we can treat $U_N$ as a $k[G/N]$-module if we wish.  Properties of the completed tensor product imply that $(-)_N$ is a right exact functor from the category of finitely generated $k\db{G}$-modules to the category of finitely generated $k\db{G/N}$-modules.  If $U$ is a finitely generated pseudocompact $k\db{G}$-module and $N$ is an open normal subgroup of $G$, denoted $N\trianglelefteq_{O} G$, then $U_N$ is finite dimensional.

The module $U_{N}$, together with the canonical quotient map $\varphi_N:U\to U_N$, satisfies the following universal property:

\emph{Every continuous $k\db{G}$-module homomorphism $\rho$ from $U$ to a pseudocompact $k\db{G}$-module $X$ on which $N$ acts trivially, factors uniquely through $\varphi_{N}$. That is, there is a unique continuous homomorphism $\rho':U_{N}\to X$ such that $\rho=\rho'\varphi_{N}$.}

We collect here several technical properties of coinvariant modules:

\begin{lem}(\cite[Lemma 2.6]{J})\label{lem:propertiescoinvtechn}
	Let $G$ be a profinite group, $N,M$ closed normal subgroups of $G$
	with $N\leq M$ and $H$ a closed subgroup of $G$.  Let $U,W$ be
	pseudocompact $k\db{G}$-modules and let $V$ be a $k\db{H}$-module.  Then
	
	\begin{enumerate}
		\item $(U_N)_M$ is naturally isomorphic to $U_M$.
		\item $(U\oplus W)_N\cong U_N\oplus W_N$.
		\item $V_{H\cap N}$ is naturally a $k\db{HN/N}$-module.
		\item $(V\uparrow^{G})_N\cong V_{H\cap N}\uparrow^{G/N}$.
		\item $U_N\downarrow_{HN/N}\cong (U\downarrow_{HN})_N$.
	\end{enumerate}
\end{lem}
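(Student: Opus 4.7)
The plan is to prove each item by invoking the universal property of $(-)_N$ together with Lemma \ref{lem:isoind/res}.

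For item (1), both $(U_N)_M$ and $U_M$ classify continuous homomorphisms out of $U$ to modules on which $M$ acts trivially: the projection $U \to U_M$ factors through $U_N$ (since $N \leq M$ acts trivially on $U_M$) and then through $(U_N)_M$, while conversely $U \to U_N \to (U_N)_M$ kills $I_M \cdot U$ and so produces a map $U_M \to (U_N)_M$; uniqueness of factorizations makes these mutually inverse. Item (2) is immediate from the additivity of $(-)_N = k\ctens_{k\db{N}}(-)$ (applied to the projections and inclusions of the direct sum). For item (3), $H \cap N$ is closed normal in $H$ and acts trivially on $V_{H\cap N}$ by construction, giving a pseudocompact $k\db{H/(H\cap N)}$-module structure that we transport to a $k\db{HN/N}$-module structure via the canonical topological isomorphism $H/(H\cap N) \cong HN/N$.

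For item (4), the strategy is to construct mutually inverse $k\db{G/N}$-module homomorphisms between $(V\uparrow^G)_N$ and $V_{H\cap N}\uparrow^{G/N}$ using induction-restriction adjunction and the universal property of coinvariants. The composite $V \to V_{H\cap N} \to V_{H\cap N}\uparrow^{G/N}$ is $k\db{H}$-linear, hence by adjunction extends to a $k\db{G}$-map $V\uparrow^G \to V_{H\cap N}\uparrow^{G/N}$; since $\bar n \bar g = \bar g$ in $G/N$ for each $n\in N$, this map kills $I_N \cdot (V\uparrow^G)$ and descends to $(V\uparrow^G)_N \to V_{H\cap N}\uparrow^{G/N}$. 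Conversely, the composite $V \to V\uparrow^G \to (V\uparrow^G)_N$ kills $H \cap N$ (since $N$ does), factors through $V_{H\cap N}$, and the resulting map lands in a module on which $H$ acts through $H/(H\cap N)\cong HN/N$; a second adjunction (from $HN/N$ up to $G/N$) produces $V_{H\cap N}\uparrow^{G/N} \to (V\uparrow^G)_N$.

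Item (5) is essentially tautological: both $U_N\downarrow_{HN/N}$ and $(U\downarrow_{HN})_N$ arise as $k\ctens_{k\db{N}}U$ carrying the same $HN/N$-action, obtained either by first passing to $N$-coinvariants and then restricting, or by first restricting to $HN$ and then passing to $N$-coinvariants. The main obstacle will be item (4), where verifying that the two constructed maps are mutually inverse requires careful tracking of how the diagonal action on simple tensors interacts with the universal factorizations; the cleanest route is to reduce to finite quotients $G/L$ for $L \trianglelefteq_O G$ via Lemma \ref{lem:invmodind/red}, where the analogous identity is classical, and then pass to the inverse limit using the exactness of inverse limits of pseudocompact modules together with Lemma \ref{lem:propertiesofLCspaces}(4).
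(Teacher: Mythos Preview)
The paper does not actually prove this lemma: it is stated with a citation to \cite[Lemma 2.6]{J} and no argument is given. Your proposal therefore supplies what the paper omits, and the approach via the universal property of coinvariants together with the induction--restriction adjunction is correct and is the standard route. Item (4) is, as you identify, the only nontrivial part; your two-sided construction is fine, and once you write the forward map explicitly on simple tensors as $\overline{g\ctens v}\mapsto gN\ctens \overline{v}$ the inverse is visibly the map you built, so the reduction to finite quotients is available but not really needed.
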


\begin{prop}(\cite[Proposition 2.7]{J})\label{prop:invlimcoinv}
	If $U$ is a pseudocompact $k\db{G}$-module, then $\{U_{N} \,:\, N\trianglelefteq_{O} G\}$ together with the set of canonical quotient maps forms a surjective inverse system with inverse limit $U$.
\end{prop}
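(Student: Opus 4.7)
The plan is to verify three things: that the transition maps between the $U_N$ exist and form an inverse system; that these maps are surjective; and that the induced map $\psi\colon U \to \varprojlim_N U_N$ is a homeomorphic isomorphism of $k\db{G}$-modules.

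For the first two, I would use the universal property of $(-)_N$ together with Lemma \ref{lem:propertiescoinvtechn}(1). For open normal subgroups $N\leq M$, the composition $U\xrightarrow{\varphi_M} U_M$ has $N$ acting trivially (since $M$ does), so by the universal property it factors uniquely as $\varphi_M = \varphi_N^M\circ\varphi_N$ for a continuous $k\db{G}$-homomorphism $\varphi_N^M\colon U_N\to U_M$. Transitivity $\varphi_N^L = \varphi_M^L\circ\varphi_N^M$ for $N\leq M\leq L$ follows from uniqueness in the universal property. Surjectivity of each $\varphi_N^M$ is immediate from the surjectivity of $\varphi_M$ and the factorization above. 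This gives the surjective inverse system, and the compatible family $\{\varphi_N\}_N$ induces a continuous $k\db{G}$-homomorphism
\[
\psi\colon U\longrightarrow \varprojlim_{N\trianglelefteq_O G} U_N.
\]

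Next I would show $\psi$ is surjective and injective. For surjectivity, $\psi(U)$ projects onto every $U_N$, so $\psi(U)$ is dense in $\varprojlim_N U_N$; but by Lemma \ref{JK-lem}(1)--(2), $\psi(U)$ is linearly compact and hence closed, so $\psi$ is onto. For injectivity, the kernel is $\bigcap_{N\trianglelefteq_O G} I_N U$. Since $U$ is pseudocompact, its open submodules of finite codimension intersect in $0$, so it suffices to show that for every such open submodule $V\leq U$ there exists an open normal $N$ with $I_N U\subseteq V$. The quotient $U/V$ is a finite dimensional discrete pseudocompact $k\db{G}$-module, so the continuous $G$-action on $U/V$ factors through $G/N$ for some $N\trianglelefteq_O G$; then $N$ acts trivially on $U/V$, so the projection $U\to U/V$ vanishes on $I_N U$, giving $I_N U\subseteq V$ as required. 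Hence $\ker\psi\subseteq \bigcap V = 0$.

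Having established that $\psi$ is a continuous bijection between the linearly compact $k$-vector spaces $U$ and $\varprojlim_N U_N$ (Lemma \ref{JK-lem}(1) and Lemma \ref{lem:propertiesofLCspaces}(2)--(3)), Lemma \ref{lem:propertiesofLCspaces}(4) immediately promotes $\psi$ to a topological isomorphism, completing the proof.

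The step I expect to be the main obstacle, and the real content of the proposition, is the injectivity argument: extracting from the pseudocompact topology on $U$ a cofinal family of open submodules of the form $I_N U$ by exploiting continuity of the $G$-action on finite dimensional discrete quotients. Once this is in hand, the remaining steps are formal consequences of the universal property of coinvariants and the functional-analytic Lemmas \ref{JK-lem} and \ref{lem:propertiesofLCspaces}.
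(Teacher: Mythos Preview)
The paper does not give its own proof of this proposition; it merely records it as \cite[Proposition 2.7]{J}. Your argument is correct and is the standard one: the universal property of coinvariants yields the transition maps and their surjectivity; density plus linear compactness of the image gives surjectivity of $\psi$; and for injectivity you correctly observe that every finite-codimension open submodule $V$ satisfies $I_NU\subseteq V$ for some $N\trianglelefteq_O G$, because the continuous $G$-action on the discrete finite dimensional quotient $U/V$ has open kernel (the intersection of the open stabilizers of a basis), inside which one may choose an open normal $N$. The final upgrade from continuous bijection to topological isomorphism via Lemma \ref{lem:propertiesofLCspaces}(4) is exactly right.

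One very minor remark: when you say ``the continuous $G$-action on $U/V$ factors through $G/N$ for some $N\trianglelefteq_O G$'', note that $\GL_k(U/V)$ need not be finite when $k$ is infinite, so the cleanest justification is the stabilizer argument above rather than an appeal to the image being finite.
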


\begin{rem}\label{obs:invsystemforcoinvariants}
	Throughout this text, whenever $N\leqslant M$ are closed normal subgroups of the profinite group $G$, the notations $\varphi_N, \varphi_{MN}$ will be reserved exclusively for the canonical maps between coinvariant modules.  In the special case of $k\db{G}$ we have $k\db{G}_N = k\db{G/N}$ and the corresponding maps $\varphi_N, \varphi_{MN}$ are in fact algebra homomorphisms.
\end{rem}

Let $A$ be a pseudocompact algebra. If $U$ and $W$ are topological $A$-modules, then $\tn{Hom}_{A}(U, W)$ denotes the topological $k$-vector space of continuous $A$-module homomorphisms from $U$ to $W$ with the compact-open topology. If $U,W$ are pseudocompact, and $W=\varprojlim_{i}W_{i}$, then $\tn{Hom}_{A}(U,W)=\varprojlim_{i}\tn{Hom}_{A}(U,W_{i})$. In particular, when $U$ is finitely generated as an $A$-module, then $\tn{Hom}_{A}(U,W)$ is a pseudocompact vector space. For more details see \cite[\S 2.2]{PPJ}.  In the special case where $A = k\db{G}$, the natural isomorphism $\tn{Hom}_{k\db{G}}(U,W_N)\cong \tn{Hom}_{k\db{G}}(U_N,W_N)$ implies that $\tn{Hom}_{k\db{G}}(U,W)\cong \varprojlim_{N}\tn{Hom}_{k\db{G}}(U_N,W_N)$.

\begin{prop}\label{prop:localEndring}
	Let $G$ be a profinite group and let $U$ be an indecomposable finitely generated $k\db{G}$-module. Then $\tn{End}_{k\db{G}}(U)$ is a local pseudocompact algebra.
\end{prop}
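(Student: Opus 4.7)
The plan is to turn the indecomposability of $U$ into the statement that $A := \End_{k\db{G}}(U)$ has only the trivial idempotents $0,1$, and then to promote this rigidity across the Jacobson radical to conclude that $A/J(A)$ is a division algebra, i.e.\ that $A$ is local. The pseudocompactness of $A$ is already established in the paragraph preceding the proposition: since $U$ is finitely generated, $\Hom_{k\db{G}}(U,W)$ is a pseudocompact vector space for every pseudocompact $W$, and composition is continuous, so $A$ is a pseudocompact algebra.

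First I would verify that $A$ admits no non-trivial idempotents. Given $e = e^2\in A$, Lemma \ref{JK-lem}(2) gives that both $e(U)$ and $(1-e)(U)$ are closed in $U$, so the continuous $k\db{G}$-homomorphism $u \mapsto (eu,(1-e)u)$ is a continuous bijection from $U$ onto the pseudocompact module $e(U) \oplus (1-e)(U)$. Since both sides are linearly compact (Lemma \ref{JK-lem}(1)), it is in fact an isomorphism of pseudocompact $k\db{G}$-modules by Lemma \ref{lem:propertiesofLCspaces}(4). Indecomposability of $U$ forces $e(U) = 0$ or $(1-e)(U) = 0$, that is, $e\in\{0,1\}$.

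Next I would invoke two standard structural facts for pseudocompact algebras, both available in \cite{Bru}: idempotents lift modulo the Jacobson radical, and the quotient $A/J(A)$ is semisimple, expressible as a product $\prod_{i\in I} M_{n_i}(D_i)$ of matrix algebras over pseudocompact division algebras $D_i$. Combined with the previous paragraph, idempotent lifting shows that $A/J(A)$ itself has only trivial idempotents; but a product of matrix algebras over division algebras has only trivial idempotents iff $|I|=1$ and $n_1=1$. Therefore $A/J(A) = D_1$ is a division algebra, and $A$ is local.

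The main obstacle is essentially citational: confirming idempotent lifting and the semisimple-quotient structure in the pseudocompact (rather than merely finite-dimensional or profinite) setting. Both facts are classical and belong to the groundwork of \cite{Bru}, but they have to be located precisely since the rest of the argument is short and does nothing more than apply lemmas already displayed in the paper.
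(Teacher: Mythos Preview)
Your argument is correct, but it follows a genuinely different route from the paper's own proof. The paper does not argue via idempotents and the structure of $A/J(A)$ at all. Instead it observes that $U$, being pseudocompact, is linearly compact (Lemma \ref{JK-lem}), hence algebraically compact by \cite{dimitric}; general results on algebraically compact modules then say that the \emph{abstract} endomorphism ring of an indecomposable algebraically compact module is local. Finally, because $U$ is finitely generated, Lemma \ref{JK-lem}(4) identifies the abstract and continuous endomorphism rings, so $\End_{k\db{G}}(U)$ is local.

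The trade-offs are as follows. The paper's route is very short once one is willing to import the machinery of algebraically compact (pure-injective) modules from \cite{dimitric}; it also yields, as a by-product, that the abstract endomorphism ring is local, which is slightly stronger and is exactly what justifies the passage from abstract to continuous endomorphisms. Your route stays entirely inside the pseudocompact world and avoids pure-injectivity: you use only that pseudocompact algebras have idempotent lifting and a semisimple quotient $A/J(A)\cong\prod_i M_{n_i}(D_i)$ (both in \cite{Bru}), together with the no-nontrivial-idempotents observation. This is arguably more elementary and better aligned with the ambient toolkit of the paper, at the cost of having to pin down the two structural facts from \cite{Bru}. Either approach is fine; the paper simply chose the citation to \cite{dimitric} as the shortest path.
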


\begin{proof}
	The algebra $\tn{End}_{k\db{G}}(U)$ is pseudocompact by \cite[Lemma 2.3]{PPJ}.  By \cite{dimitric}, 
	linearly compact modules are algebraically compact (see for instance \cite[Proposition 4.11]{dimitric}) and so the ring of abstract endomorphisms of $U$ is local \cite[Theorem 4.15 and Proposition 4.10]{dimitric}.  But since $U$ is finitely generated, the rings of abstract and continuous endomorphisms coincide by Lemma \ref{JK-lem}.
\end{proof}

\begin{lem}\label{lem:splitmorphisms}
	Let $U, W$ be pseudocompact $k\db{G}$-modules with $U$  finitely generated. If $\pi:W\to U$ is a surjective $k\db{G}$-module homomorphism such that the induced map $\pi_{N}:W_{N}\to U_{N}$ splits as a $k[G/N]$-module homomorphism for each $N\trianglelefteq_{O}G$, then $\pi$ splits as a $k\db{G}$-module homomorphism.
\end{lem}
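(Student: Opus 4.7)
The plan is to construct the splitting $s\colon U\to W$ as an inverse limit of the given splittings $s_N\colon U_N\to W_N$, exploiting the identification $\tn{Hom}_{k\db{G}}(U,W)\cong \varprojlim_N\tn{Hom}_{k\db{G}}(U_N,W_N)$ recalled in the excerpt, which is valid because $U$ is finitely generated. The obstacle is that the hypothesized splittings are non-canonical and so need not be compatible along the inverse system. I would resolve this by a Lefschetz-type compactness argument, invoking Lemma \ref{lem:propertiesofLCspaces}(3).

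For each $N\trianglelefteq_{O} G$, let $S_N\subseteq \tn{Hom}_{k\db{G}}(U_N,W_N)$ denote the set of $k[G/N]$-linear splittings of $\pi_N$, which is non-empty by hypothesis. Fixing any $s_{0}\in S_N$, one sees $S_N=s_{0}+\tn{Hom}_{k\db{G}}(U_N,\ker \pi_N)$, so $S_N$ is a closed linear variety inside the ambient pseudocompact, hence linearly compact, $k$-vector space $\tn{Hom}_{k\db{G}}(U_N,W_N)$ (linear compactness follows from Lemma \ref{lem:propertiesofLCspaces}(1)--(2) applied to the pseudocompact description of the Hom space given just before Proposition \ref{prop:localEndring}).

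For open normal $N\leq M$ in $G$, applying $(-)_{M}$ to a map $s_N\colon U_N\to W_N$ and using the natural isomorphisms $(U_N)_{M}\cong U_M$ and $(W_N)_{M}\cong W_M$ from Lemma \ref{lem:propertiescoinvtechn}(1) produces a continuous $k$-linear map $\tn{Hom}_{k\db{G}}(U_N,W_N)\to \tn{Hom}_{k\db{G}}(U_M,W_M)$; continuity is immediate from the universal property of the coinvariants and continuity of post-composition with the canonical quotient $W_N\to W_M$. Since $(\pi_N)_{M}=\pi_M$ and $(\id_{U_N})_{M}=\id_{U_M}$, this map carries $S_N$ into $S_M$. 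Hence $\{S_N\}$ is an inverse subsystem of $\{\tn{Hom}_{k\db{G}}(U_N,W_N)\}$ consisting of non-empty closed linear varieties in linearly compact vector spaces, so by Lemma \ref{lem:propertiesofLCspaces}(3) its inverse limit is non-empty.

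Any coherent family $(s_N)\in\varprojlim_N S_N$ corresponds, via the identification above, to a continuous $k\db{G}$-homomorphism $s\colon U\to W$ whose $N$-coinvariants equal $s_N$. Then $\pi\circ s\colon U\to U$ satisfies $(\pi\circ s)_N=\pi_N\circ s_N=\id_{U_N}$ for every $N$, and by the naturality of the isomorphism $U\cong\varprojlim_{N}U_N$ in Proposition \ref{prop:invlimcoinv} this forces $\pi\circ s=\id_U$, giving the desired splitting. The main technical checks -- that coinvariants induce continuous maps between the relevant Hom spaces and that these maps send splittings to splittings -- are not deep, but do require careful use of the universal property of $(-)_{N}$ and the compatibility between $\varphi_{MN}$ in source and target; once these are in place the compactness argument concludes the proof cleanly.
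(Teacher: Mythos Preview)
Your proposal is correct and follows essentially the same approach as the paper's proof. Both arguments identify the set of splittings of $\pi_N$ as a non-empty closed linear variety in $\tn{Hom}_{k\db{G}}(U_N,W_N)$ (you write it as $s_0+\tn{Hom}_{k\db{G}}(U_N,\ker\pi_N)$, the paper as $\alpha+\tn{Ker}(\Gamma)$ for $\Gamma(\gamma)=\pi_N\gamma$, which is the same thing), check that the coinvariants functor yields transition maps carrying splittings to splittings, and then invoke Lemma~\ref{lem:propertiesofLCspaces}(3) to obtain a coherent family giving the desired splitting of $\pi$.
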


\begin{proof}
	For each $N\trianglelefteq_{O}G$ we have by hypothesis a non-empty set $X_N\subseteq \tn{Hom}_{k\db{G}}(U_N,W_N)$ of splittings of $\pi_N$.  For $N\leqslant M$ the map $X_N\to X_M$ given by $\alpha\mapsto \alpha_M$ makes the $X_N$ into an inverse system.  Each $X_N$ is a non-empty closed linear variety, being equal to $\alpha + \tn{Ker}(\Gamma)$, where $\alpha$ is an arbitrary element of $X_N$ and $\Gamma:\tn{Hom}_{k\db{G}}(U_N,W_N)\to \tn{End}_{k\db{G}}(U_N)$ is the continuous linear map sending $\gamma\mapsto \pi_N\gamma$.  It follows from Lemma \ref{lem:propertiesofLCspaces} that $\varprojlim X_N\neq\varnothing$.  An element $\iota\in \varprojlim X_N$ is a splitting of $\pi$.
\end{proof}

\begin{rem}
	Lemma \ref{lem:splitmorphisms} may be compared with  \cite[Lemma 3.4]{J}, which says that when $k$ is finite and $U,W$ are finitely generated, then if $U_N$ is isomorphic to a direct summand of $W_N$ for each $N$, then $U$ is isomorphic to a direct summand of $W$.  On one hand Lemma \ref{lem:splitmorphisms} is stronger, as it does not require that $W$ be finitely generated.  On the other hand, even when $W$ is assumed to be finitely generated, the proof of \cite[Lemma 3.4]{J} does not obviously carry through when $k$ is infinite.  This weaker version (wherein we must specify our candidate $\pi$ for the split surjection) will be sufficient for our purposes.  If $W$ is assumed to be finitely generated, one may alternatively fix the candidate $\iota:U\to W$ for a split injection and find a corresponding split surjection $\pi$.
\end{rem}

\subsection{Relative Projectivity}

Most of the results of this section were proved for $k$ finite in \cite{J}.  We restate them here in the greater generality we require, observing any modifications required in the proof.  A pseudocompact $k\db{G}$-module $U$ is \emph{relatively $H$-projective} if given any diagram of pseudocompact $k\db{G}$-modules and continuous $k\db{G}$-module homomorphisms of the form
\begin{eqnarray*}
	\xymatrix{
		& U   \ar[d]^{\varphi} \\
		V \ar@{->>}[r]_{\beta} & W }
\end{eqnarray*}
such that there exists a continuous $k\db{H}$-module homomorphism $ U\to V$ making the triangle commute, then there exists a continuous $k\db{G}$-module homomorphism $U\to V$ making the triangle commute. Observe that a projective module is the same thing as a 1-projective module.  Several familiar characterizations of relatively projective modules follow exactly as with finite groups:

\begin{lem}\label{lem:basicrelprojchar}
	Let $G$ be a profinite group, $H$ a closed subgroup of $G$ and let $U$ be a pseudocompact $k\db{G}$-module.  The following are equivalent.
	\begin{enumerate}
		\item $U$ is relatively $H$-projective,
		\item If a continuous $k\db{G}$-module epimorphism $V\to U$ splits as a $k\db{H}$-module homomorphism, then it splits,
		\item\label{item:UsummandUresind} $U$ is isomorphic to a direct summand of $U\res{H}\ind{G}$,
		\item\label{item:projectionsplits} The natural projection $\pi:U\res{H}\ind{G}\to U$ sending $g\ctens u\mapsto gu$ splits.
		\item $U$ is isomorphic to a direct summand of $X\ind{G}$ for some $k\db{H}$-module $X$.
	\end{enumerate}
\end{lem}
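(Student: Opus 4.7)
The plan is to prove equivalence by the classical cyclic chain $(1)\Rightarrow(2)\Rightarrow(4)\Rightarrow(3)\Rightarrow(5)\Rightarrow(1)$, which for finite groups is standard and whose ingredients (induction–restriction adjunction, existence of the canonical $k\db{H}$-splitting of the projection $\pi$, behaviour of direct summands) are already available in the earlier part of the paper.

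The first four steps should be essentially formal. For $(1)\Rightarrow(2)$, given a continuous $k\db{G}$-epimorphism $\beta:V\twoheadrightarrow U$ with a $k\db{H}$-splitting, apply the defining lifting property (1) to the diagram with $\varphi=\id_U$; the resulting $k\db{G}$-lift is the required splitting. For $(2)\Rightarrow(4)$, observe that the multiplication map $\pi:U\res{H}\ind{G}=k\db{G}\ctens_{k\db{H}}U\to U$ has the obvious $k\db{H}$-linear section $u\mapsto 1\ctens u$, so by (2) it splits over $k\db{G}$. Implication $(4)\Rightarrow(3)$ is immediate, and $(3)\Rightarrow(5)$ follows by taking $X=U\res{H}$.

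The only step requiring genuine work is $(5)\Rightarrow(1)$. Assume $U\oplus U'\cong X\ind{G}$ with inclusion $\iota:U\incl X\ind{G}$ and retraction $\rho:X\ind{G}\onto U$. Given a lifting problem consisting of $\varphi:U\to W$ and $\beta:V\onto W$ together with a continuous $k\db{H}$-homomorphism $\psi:U\res{H}\to V\res{H}$ satisfying $\beta\psi=\varphi\res{H}$, I would first form the continuous $k\db{H}$-map
\[
\alpha\;=\;\psi\circ\rho\res{H}\circ\iota_X\colon X\longrightarrow V\res{H},
\]
where $\iota_X:X\to X\ind{G}\res{H}$ is the structural map $x\mapsto 1\ctens x$. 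By the induction–restriction adjunction recalled in \S 2.2, $\alpha$ corresponds to a continuous $k\db{G}$-homomorphism $\bar\alpha:X\ind{G}\to V$ with $\bar\alpha(g\ctens x)=g\alpha(x)$. A direct computation using that $\rho$ is $k\db{G}$-linear and $\beta\psi=\varphi\res{H}$ then gives $\beta\bar\alpha=\varphi\rho$, whence $\beta(\bar\alpha\iota)=\varphi\rho\iota=\varphi$, and $\bar\alpha\iota:U\to V$ is the desired $k\db{G}$-lift.

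The step I expect to require the most care is the final one, specifically checking that each map produced is continuous and that the adjunction is applied correctly in the pseudocompact setting; but since both the adjunction and the needed module-theoretic constructions have already been set up in the preceding subsections, the argument should go through with no essential modification from the finite-group case.
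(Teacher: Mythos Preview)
Your proposal is correct and follows essentially the same classical route as the paper, which simply remarks that the proof ``goes through as for finite groups'' and then singles out one implication for explicit treatment.

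The only noteworthy difference is organizational. The paper chooses to spell out $3\Rightarrow 4$, arguing via the counit of the induction--restriction adjunction: given an arbitrary splitting $\alpha:U\to U\res{H}\ind{G}$, $\beta:U\res{H}\ind{G}\to U$, one writes $\beta=\pi\circ\gamma\ind{G}$ for some endomorphism $\gamma$ of $U\res{H}$, so that $\pi(\gamma\ind{G}\alpha)=\id_U$. Your cycle $(1)\Rightarrow(2)\Rightarrow(4)\Rightarrow(3)\Rightarrow(5)\Rightarrow(1)$ bypasses this step entirely, since you use the trivial direction $4\Rightarrow 3$ and close the loop through $5$ instead. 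This is a perfectly legitimate (and arguably cleaner) arrangement; the paper's detour through $3\Rightarrow 4$ simply reflects its desire to establish the full pairwise equivalence rather than a minimal cycle. Your verification of $(5)\Rightarrow(1)$ via the adjunction is correct, and your caution about continuity is well placed but, as you note, already handled by the earlier material on the completed tensor product and the adjunction in the pseudocompact setting.
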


\begin{proof}
	This goes through as for finite groups (cf.\ \cite[Proposition 3.6.4]{B}).  We show only that \ref{item:UsummandUresind} implies \ref{item:projectionsplits}, since it is not completely standard (though still well-known).  Note that $\pi$ is the component at $U$ of the counit of the induction-restriction adjunction. If $\alpha:U\to U\res{H}\ind{G}$ and $\beta:U\res{H}\ind{G}\to U$ are the given splitting maps then, by formal properties of adjoint functors, $\beta = \pi\circ \gamma\ind{G}$ for some endomorphism $\gamma$ of $U\res{H}$.  But then $\tn{id}_U = \beta\alpha = \pi (\gamma\ind{G}\alpha)$, showing that $\pi$ is split.
\end{proof}

\begin{prop}\label{prop:relprojHN}
	Let $U$ be a finitely generated pseudocompact $k\db{G}$-module, and $H \leq_{C} G$. Then $U$ is relatively $H$-projective if, and only if, $U$ is relatively $HN$-projective for every $N \trianglelefteq_{O} G$.
\end{prop}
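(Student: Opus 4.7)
The forward implication is almost tautological: any $k\db{HN}$-module splitting of a $k\db{G}$-module surjection onto $U$ is in particular a $k\db{H}$-module splitting, so relative $H$-projectivity of $U$ immediately forces relative $HN$-projectivity for every $N\trianglelefteq_{O}G$.

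For the converse, my plan is to produce a $k\db{G}$-module splitting of the canonical surjection $\pi:U\res{H}\ind{G}\to U$, which by Lemma~\ref{lem:basicrelprojchar} is equivalent to $U$ being relatively $H$-projective. The pivotal observation is Lemma~\ref{lem:invmodind/red}: $U\res{H}\ind{G}\cong\varprojlim_{N\trianglelefteq_{O}G}U\res{HN}\ind{G}$, with transition maps $t_{MN}:U\res{HM}\ind{G}\to U\res{HN}\ind{G}$ for $M\subseteq N$ satisfying $\pi_{HN}\circ t_{MN}=\pi_{HM}$, where $\pi_{HN}:U\res{HN}\ind{G}\to U$ denotes the analogous natural projection. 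By the universal property of the inverse limit, a splitting of $\pi$ is the same datum as a compatible family $(\iota_{N})_{N}$ of $k\db{G}$-module splittings of the various $\pi_{HN}$, meaning $t_{MN}\iota_{M}=\iota_{N}$ whenever $M\subseteq N$. Setting
$$X_{N}=\{\iota\in\Hom_{k\db{G}}(U,U\res{HN}\ind{G}):\pi_{HN}\iota=\id_{U}\},$$
the $HN$-projectivity hypothesis together with Lemma~\ref{lem:basicrelprojchar} gives $X_{N}\neq\varnothing$ for every $N$, and the identity $\pi_{HN}\circ t_{MN}=\pi_{HM}$ ensures that post-composition with $t_{MN}$ carries $X_{M}$ into $X_{N}$, making $\{X_{N}\}$ into an inverse system.

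The main technical step, and the only place where finite generation of $U$ is used, is to apply Lemma~\ref{lem:propertiesofLCspaces}(3) to conclude $\varprojlim X_{N}\neq\varnothing$. For this one must check that each $X_{N}$ is a non-empty closed linear variety in a linearly compact vector space: $\Hom_{k\db{G}}(U,U\res{HN}\ind{G})$ is pseudocompact (hence linearly compact) because $U$ is finitely generated and $U\res{HN}\ind{G}$ is pseudocompact, while $X_{N}$ is the preimage of the closed point $\id_{U}$ under the continuous linear map $\gamma\mapsto\pi_{HN}\gamma$, and so is a coset of a closed subspace. Any element of $\varprojlim X_{N}$ then assembles, via the universal property, into the desired $k\db{G}$-module splitting $\iota:U\to U\res{H}\ind{G}$ of $\pi$. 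Conceptually, this is the same inverse-limit-of-non-empty-closed-linear-varieties trick that underlies Lemma~\ref{lem:splitmorphisms}, here applied one level up to splittings of the projections $\pi_{HN}$ rather than to splittings of $N$-coinvariants of a single map.
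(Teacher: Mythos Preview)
Your proof is correct and follows essentially the same approach as the paper, which refers to \cite[Proposition 3.3]{J} and observes that the relevant sets of splittings $I_N\subseteq\Hom_{k\db{G}}(U,U\res{HN}\ind{G})$ are non-empty closed linear varieties, so that their inverse limit is non-empty by Lemma~\ref{lem:propertiesofLCspaces}. Your $X_N$ are exactly these $I_N$, and you have spelled out the details (the use of Lemma~\ref{lem:invmodind/red}, the compatibility $\pi_{HN}\circ t_{MN}=\pi_{HM}$, and the reason the Hom spaces are linearly compact) that the paper leaves to the cited reference.
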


\begin{proof}
	One may follow the proof of \cite[Proposition 3.3]{J}, noting as in the proof of Lemma \ref{lem:splitmorphisms} that the subsets $I_N$ of $\tn{Hom}_{k\db{G}}(U,U\res{HN}\ind{G})$ are closed linear varieties, so that the inverse limit of the $I_N$ is non-empty by Lemma \ref{lem:propertiesofLCspaces}.
\end{proof}

Recall that a subset $J$ of the directed set $I$ is \emph{cofinal} if, for every $i\in I$ there is $j \in J$ with $j\geqslant i$.  In this case whenever $\{X_i\,|\,i\in I\}$ is an inverse system indexed by $I$, then $\{X_i\,|\,i\in J\}$ is also an inverse system and $\invlim_{i\in I}X_i = \invlim_{i\in J}X_i$.  Our directed set will usually be the set of open normal subgroups of a profinite group $G$, ordered by reverse inclusion, so that cofinal means that for each open normal subgroup $M$ of $G$, there is $N$ in the cofinal subset with $N\leqslant M$.

\begin{prop}\label{prop:charrelprojcoinv}
	Let $U$ be a finitely generated pseudocompact $k\db{G}$-module, and $H \leq_{C} G$. Then $U$ is relatively $H$-projective if, and only if, $U_{N}$ is relatively $HN$-projective for every $N\trianglelefteq_{O} G$.
\end{prop}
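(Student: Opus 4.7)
The plan is to prove both directions by combining the induced-module characterizations of relative projectivity (Lemma \ref{lem:basicrelprojchar}) with the coinvariant identities in Lemma \ref{lem:propertiescoinvtechn}, Proposition \ref{prop:relprojHN}, and Lemma \ref{lem:splitmorphisms}.

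For the forward direction, assume $U$ is relatively $H$-projective. By Lemma \ref{lem:basicrelprojchar}(\ref{item:projectionsplits}) the natural projection $\pi\colon U\res{H}\ind{G}\to U$ splits, and applying the right-exact functor $(-)_N$ yields a splitting of $\pi_N$. Parts (3) and (4) of Lemma \ref{lem:propertiescoinvtechn} give
\[
(U\res{H}\ind{G})_N \;\cong\; (U\res{H})_{H\cap N}\ind{G/N},
\]
with $(U\res{H})_{H\cap N}$ naturally a $k\db{HN/N}$-module. Thus $U_N$ is a direct summand of a module induced from $HN/N$ to $G/N$, and Lemma \ref{lem:basicrelprojchar}(5) yields that $U_N$ is relatively $HN$-projective.

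For the reverse direction, suppose $U_N$ is relatively $HN$-projective for every $N\trianglelefteq_O G$. By Proposition \ref{prop:relprojHN} it suffices to fix $M\trianglelefteq_O G$ and show $U$ is relatively $HM$-projective; equivalently, by Lemma \ref{lem:basicrelprojchar}(\ref{item:projectionsplits}), that the natural projection $\pi\colon U\res{HM}\ind{G}\to U$ splits. I apply Lemma \ref{lem:splitmorphisms} to reduce to producing splittings of each $\pi_N$. For $N \leq M$, parts (4) and (5) of Lemma \ref{lem:propertiescoinvtechn} (using $HM\cap N = N$ and $HMN = HM$) identify
\[
(U\res{HM}\ind{G})_N \;\cong\; U_N\res{HM/N}\ind{G/N},
\]
under which $\pi_N$ becomes the natural projection. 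Since $HN \leq HM$ and relative projectivity is monotone in the subgroup (immediate from Lemma \ref{lem:basicrelprojchar}(5) and transitivity of induction), the hypothesis forces $U_N$ to be relatively $HM/N$-projective, so $\pi_N$ splits. For $N$ not contained in $M$, I set $N_0 := N\cap M \leq M$ and transport the splitting of $\pi_{N_0}$ through the right-exact functor $(-)_{N/N_0}$ using part (1) of Lemma \ref{lem:propertiescoinvtechn} to obtain a splitting of $\pi_N$.

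The main obstacle is the bookkeeping required to track the natural projection through the coinvariant identifications and confirm it is preserved by each isomorphism. A secondary subtlety is that Lemma \ref{lem:splitmorphisms} demands splittings at every open normal $N$ rather than merely a cofinal family; this is handled by the pushforward from $N\cap M$ described above (or alternatively by restricting the inverse-limit argument in its proof to the cofinal subset $\{N : N\leq M\}$).
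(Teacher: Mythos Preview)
Your proof is correct and follows essentially the same strategy as the paper: fix $M$, use the coinvariant identity $(U\res{HM}\ind{G})_N\cong U_N\res{HM/N}\ind{G/N}$ together with the hypothesis to see that each $\pi_N$ splits, invoke Lemma~\ref{lem:splitmorphisms}, and conclude via Proposition~\ref{prop:relprojHN}. The only differences are cosmetic: the paper omits the (easy) forward direction and works directly with the cofinal system $\{N:N\leqslant M\}$ in Lemma~\ref{lem:splitmorphisms} rather than handling general $N$ separately as you do.
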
 

\begin{proof}
	We essentially follow the proof of \cite[Proposition 3.5]{J}.  Fix $M\trianglelefteq_O G$ and consider the cofinal system of open normal subgroups $N$ of $G$ contained in $M$.  The module $U_N$ is relatively $HM$-projective and hence by Lemma \ref{lem:basicrelprojchar}, the canonical projection $U_N\res{HM}\ind{G}\to U_N$ splits.  But $U_N\res{HM}\ind{G} \iso (U\res{HM}\ind{G})_N$ and hence by Lemma \ref{lem:splitmorphisms}, $U$ is relatively $HM$-projective.  The result now follows from Proposition \ref{prop:relprojHN}.
\end{proof}

\begin{defi}\label{def:tracmap}
	If $H\leq_{O} G$ and $U$, $W$ are $k\db{G}$-modules, then \emph{the trace map}

	$$\tn{Tr}_{H}^{G}: \tn{Hom}_{k\db{H}}(U\downarrow_{H}, W\downarrow_{H})\to \tn{Hom}_{k\db{G}}(U, W),$$
	is defined by $\alpha \mapsto\sum_{s\in G/H}s\alpha s^{-1}$.
\end{defi}

Collecting the above results and mimicking the finite case for the final item, we have:

\begin{teo}\label{teo:projrel}
	Let $G$ be a profinite group, $H\leq_{C} G$, and $U$ a finitely generated pseudocompact $k\db{G}$-module. The following are equivalent:
	\begin{enumerate}
		\item $U$ is relatively $H$-projective,
		\item If a continuous $k\db{G}$-module epimorphism $V\to U$ splits as a $k\db{H}$-module homomorphism, then it splits,
		\item $U$ is isomorphic to a continuous direct summand of $U\res{H}\ind{G}$,
		\item The natural projection $\pi:U\res{H}\ind{G}\to U$ sending $g\ctens u\mapsto gu$ splits.
		\item $U$ is isomorphic to a continuous direct summand of $X\ind{G}$ for some $k\db{H}$-module $X$.
		\item $U$ is relatively $HN$-projective for every $N\trianglelefteq_{O} G$.
		\item $U_N$ is relatively $HN$-projective for every $N\trianglelefteq_{O} G$.
		\item For every $N\trianglelefteq_{O} G$ there exists a continuous $k\db{HN}$-endomorphism $\alpha_{N}$ of $U$ such that $\tn{id}_{U} = \tn{Tr}_{HN}^{G}(\alpha_{N})$.
	\end{enumerate}
\end{teo}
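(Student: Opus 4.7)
My plan is to string together the results already proved in this section to obtain most of the equivalences, and then carry out Higman's classical trace argument for the one item left. Specifically, (1)$\Leftrightarrow$(2)$\Leftrightarrow$(3)$\Leftrightarrow$(4)$\Leftrightarrow$(5) is Lemma \ref{lem:basicrelprojchar}, (1)$\Leftrightarrow$(6) is Proposition \ref{prop:relprojHN}, and (1)$\Leftrightarrow$(7) is Proposition \ref{prop:charrelprojcoinv}; so the only genuine work is to incorporate (8), which I would do by proving (6)$\Leftrightarrow$(8). The key observation enabling the trace argument is that $N\trianglelefteq_O G$ forces $HN$ to be open in $G$, so $[G:HN]$ is finite and I may work with a genuine finite transversal $T = \{t_1,\ldots,t_n\}$ for $HN$ in $G$, chosen with $t_1=1$; this lets the classical finite group computations transport essentially verbatim. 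Throughout I would fix such a $T$ and use that every element of $U\res{HN}\ind{G}$ has a unique expression $\sum_{i=1}^{n} t_i\ctens u_i$ with $u_i\in U$.

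For (6)$\Rightarrow$(8), I would start from the continuous $k\db{G}$-linear section $\iota:U\to U\res{HN}\ind{G}$ of the natural projection $\pi$ supplied by item (4) of Lemma \ref{lem:basicrelprojchar} applied with $HN$ in place of $H$. I would then introduce the ``identity-coset projection'' $\rho:U\res{HN}\ind{G}\to U\res{HN}$, $\rho\bigl(\sum_i t_i\ctens u_i\bigr)=u_1$, verify that $\rho$ is continuous and $k\db{HN}$-linear by rewriting $h t_i = t_{\sigma(i)} h'$ for each $h\in HN$, and set $\alpha_N := \rho\circ\iota\in \tn{End}_{k\db{HN}}(U)$. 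Using $\iota(t_i^{-1}u) = t_i^{-1}\iota(u)$ together with the definition of $\rho$ shows that $\alpha_N(t_i^{-1}u)$ is exactly the $i$-th coefficient of $\iota(u)$, and hence $\tn{Tr}_{HN}^{G}(\alpha_N)(u) = \sum_i t_i\,\alpha_N(t_i^{-1}u) = \pi(\iota(u)) = u$ for every $u\in U$.

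For (8)$\Rightarrow$(6), given $\alpha_N$ I would define a continuous $k$-linear map $\iota:U\to U\res{HN}\ind{G}$ by $\iota(u) := \sum_i t_i\ctens\alpha_N(t_i^{-1}u)$. The hypothesis $\tn{Tr}_{HN}^G(\alpha_N) = \id_U$ immediately gives $\pi\iota = \id_U$, so all that remains is the $k\db{G}$-linearity of $\iota$, after which Lemma \ref{lem:basicrelprojchar} (with $HN$ in place of $H$) delivers relative $HN$-projectivity of $U$. This linearity check is the step I expect to demand the most care, but it is the standard Higman computation: for $g\in G$, write each $gt_i$ uniquely as $t_{\sigma(i)} h_i$ with $h_i\in HN$, then use that $\alpha_N$ is $k\db{HN}$-linear to rewrite $h_i\alpha_N(t_i^{-1}u) = \alpha_N(t_{\sigma(i)}^{-1}gu)$, and reindex $j=\sigma(i)$ to conclude $g\iota(u) = \iota(gu)$. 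Because $[G:HN]$ is finite, all sums involved are finite and no topological subtlety intervenes, so the argument is word-for-word the finite group one.
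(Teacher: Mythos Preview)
Your proposal is correct and follows exactly the route the paper intends: the equivalence of (1)--(5) is Lemma~\ref{lem:basicrelprojchar}, (1)$\Leftrightarrow$(6) is Proposition~\ref{prop:relprojHN}, (1)$\Leftrightarrow$(7) is Proposition~\ref{prop:charrelprojcoinv}, and the paper's instruction to ``mimic the finite case for the final item'' is precisely the Higman trace computation you spell out for (6)$\Leftrightarrow$(8), which is valid because $HN$ is open and the transversal is finite.
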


\subsection{Vertices}

Let $G$ be a profinite group and $U$ a finitely generated indecomposable pseudocompact $k\db{G}$-module.  A \emph{vertex} of $U$ is a closed subgroup $Q$ of $G$ such that $U$ is projective relative to $Q$, but not projective relative to any proper closed subgroup of $Q$.

The following results were proved for profinite $k\db{G}$-modules in \cite[Section 4]{J}, but both statements and proofs carry through for pseudocompact $k\db{G}$-modules, given Proposition \ref{prop:localEndring}.

\begin{prop}\label{lem:existvetx}\label{teo:verticesareconjugate}\label{lem:vtxpro-psubgrp}
	(\cite[Section 4]{J})
	If $U$ is an indecomposable finitely generated pseudocompact $k\db{G}$-module, then a vertex of $U$ exists.  The vertices of $U$ are a $G$-conjugacy class of pro-$p$ subgroups of $G$.
\end{prop}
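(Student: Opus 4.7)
My plan is to prove existence, the pro-$p$ property, and conjugacy in turn, adapting the finite-group arguments to the profinite setting using the tools above: Theorem \ref{teo:projrel} translates relative $Q$-projectivity of $U$ over $k\db{G}$ into relative $QN$-projectivity for each $N\trianglelefteq_O G$, and Proposition \ref{prop:localEndring} supplies Krull--Schmidt--Azumaya uniqueness on summands of $U$.

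\textbf{Existence.} I apply Zorn's lemma to the non-empty family of closed subgroups $Q\leq_C G$ with $U$ relatively $Q$-projective, ordered by reverse inclusion. For a descending chain $\{Q_i\}$ with intersection $Q$, fix $N\trianglelefteq_O G$. A compactness argument using that each $Q_i$ is closed gives $QN=\bigcap_i Q_iN$, and since these descend in the finite group $G/N$ they stabilize: $QN=Q_{i_0}N$ for some $i_0$. Theorem \ref{teo:projrel} then yields $U$ relatively $Q_{i_0}$-projective, hence relatively $Q_{i_0}N=QN$-projective; as this holds for every $N$, Theorem \ref{teo:projrel} delivers $U$ relatively $Q$-projective, and Zorn produces a minimal such $Q$.

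\textbf{Pro-$p$ property.} Let $Q$ be a vertex and pick a Sylow pro-$p$ subgroup $P$ of $Q$. For every $N\trianglelefteq_O G$, Dedekind's modular law together with $QN/N\cong Q/(Q\cap N)$ shows $PN/N$ is a Sylow $p$-subgroup of $QN/N$. Since $U_N$ is relatively $QN/N$-projective over $k[G/N]$, the classical descent of relative projectivity to a Sylow $p$-subgroup of the relative group (via trace-map manipulations) gives $U_N$ relatively $PN/N$-projective, i.e.\ $U$ relatively $PN$-projective. Theorem \ref{teo:projrel} then produces $U$ relatively $P$-projective, and minimality of $Q$ forces $P=Q$.

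\textbf{Conjugacy.} Let $Q,Q'$ be vertices. Then $U\res{Q}$ is a summand of $(U\res{Q'}\ind{G})\res{Q}$. At each level $N\trianglelefteq_O G$, Mackey's decomposition in the finite group $G/N$ (applied after passing to coinvariants using Lemma \ref{lem:propertiescoinvtechn}) expresses the restricted module as a direct sum indexed by $(QN/N)\backslash(G/N)/(Q'N/N)$, with summands induced from $(QN\cap gQ'g^{-1}N)/N$. Locality of $\End_{k\db{G}}(U)$ (Proposition \ref{prop:localEndring}) gives Krull--Schmidt at each finite level, so the indecomposable summand containing $U_N$ is induced from some such subgroup. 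The main obstacle is coherently selecting a single $g\in G$ across all $N$ simultaneously: this is handled by realizing the possible double-coset representatives at each level as a non-empty closed linear variety, forming an inverse system, and invoking Lemma \ref{lem:propertiesofLCspaces} together with Lemma \ref{lem:splitmorphisms} to lift the resulting splittings to $G$. The outcome is that $U$ is relatively $Q\cap gQ'g^{-1}$-projective for some $g\in G$; minimality of $Q$ then gives $Q\subseteq gQ'g^{-1}$, and symmetric application of minimality of $Q'$ yields equality.
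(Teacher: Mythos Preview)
The paper itself gives no argument here: it cites \cite[Section 4]{J} and notes that the proofs there carry over to the pseudocompact setting once Proposition~\ref{prop:localEndring} is available. Your existence and pro-$p$ paragraphs are correct and are faithful reconstructions of that strategy.

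The conjugacy paragraph, however, has a genuine gap, in two places. First, locality of $\End_{k\db{G}}(U)$ does not give you Krull--Schmidt for $U_N$ at finite levels---indeed $U_N$ is typically decomposable---so ``the indecomposable summand containing $U_N$'' is not meaningful. What locality of $\End_{k\db{G}}(U)$ does buy you is that $U$ itself, as an indecomposable $k\db{G}$-module, picks out a single term from any \emph{finite} direct sum it sits in as a summand. For each $N\trianglelefteq_O G$ one should therefore apply Mackey to the \emph{open} subgroups $QN$ and $Q'N$ and conclude that $U$ (not $U_N$) is relatively $QN\cap {}^{g}Q'N$-projective for some $g$. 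Second, the coherent choice of $g$ across all $N$ is not a linear-compactness argument: double-coset representatives live in $G$, not in a $k$-vector space, so Lemma~\ref{lem:propertiesofLCspaces} and Lemma~\ref{lem:splitmorphisms} are the wrong tools. The correct device is ordinary compactness of the profinite group $G$: the set $C_N\subseteq G$ of those $g$ for which $U$ is relatively $QN\cap {}^{g}Q'N$-projective is a non-empty union of $(QN,Q'N)$-double cosets, hence closed, and $C_N\subseteq C_M$ whenever $N\leqslant M$, so $\bigcap_N C_N\neq\varnothing$. Any $g$ in this intersection gives $U$ relatively $Q\cap{}^{g}Q'$-projective via Theorem~\ref{teo:projrel}, whence $Q\subseteq {}^{g}Q'$ by minimality; the symmetric inclusion together with a comparison of supernatural orders then forces equality.
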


Here and elsewhere, when $U,V$ are $k\db{G}$-modules, we write $U\ds V$ to mean that $U$ is isomorphic to a continuous direct summand of $V$.

\begin{lem}\label{lem:vtxsimplemodule}
	Let $G$ be a profinite group, $N\trianglelefteq_{C}G$ and $U$ a finitely generated indecomposable $k\db{G}$-module on which $N$ acts trivially. If $U$ has vertex $Q$ as a $k\db{G}$-module, then $U$ has vertex $QN/N$ as a $k[G/N]$-module.
\end{lem}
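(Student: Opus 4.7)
\emph{Proof proposal.} The plan is to prove the following intermediate claim, which cleanly implies the lemma: for any closed subgroup $V$ of $G$ with $N \leq V$, the module $U$ is relatively $V$-projective as a $k\db{G}$-module if and only if $U$ is relatively $V/N$-projective as a $k\db{G/N}$-module. Granting the claim, since $U$ is $Q$-projective it is also $QN$-projective in $k\db{G}$, hence $QN/N$-projective in $k\db{G/N}$, so any vertex of $U$ in $k\db{G/N}$ lies inside $QN/N$ up to $G/N$-conjugacy. Conversely, if $V/N$ is a vertex of $U$ in $k\db{G/N}$, I would lift a conjugating element from $G/N$ to $G$ to arrange $V \leq QN$; the claim then gives that $U$ is $V$-projective in $k\db{G}$, so by Proposition \ref{teo:verticesareconjugate} some $G$-conjugate $Q^h$ lies in $V$, yielding $(QN)^h = Q^h N \leq V \leq QN$. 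Projecting to each finite quotient $G/M$ with $M \trianglelefteq_O G$, the chain $(QN)^h M/M \leq VM/M \leq QNM/M$ sits in the finite group $G/M$; because $M \trianglelefteq G$ we have $(QN)^h M = (QNM)^h$, so the outer subgroups are $G/M$-conjugate and have equal order, forcing $VM = QNM$. Since a closed subgroup of a profinite group equals the intersection of its open overgroups, $V = \bigcap_M VM = \bigcap_M QNM = QN$.

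To prove the claim I would use Proposition \ref{prop:charrelprojcoinv} to reduce both sides to assertions in finite quotients. Because $N$ acts trivially on $U$, one checks that for every $M \trianglelefteq_O G$ the subgroup $MN/M$ acts trivially on the coinvariant $U_M$ (since $I_NU=0$ forces $I_NU_M=0$), and the natural identification $U_M = U_{MN}$ holds; in particular $U_M$ as a $k[G/M]$-module is pulled back from a $k[G/MN]$-module. Proposition \ref{prop:charrelprojcoinv} applied in $k\db{G}$ says $V$-projectivity of $U$ is equivalent to: $U_M$ is $VM/M$-projective in $k[G/M]$ for every $M \trianglelefteq_O G$. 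The same proposition applied in $k\db{G/N}$ (whose open normal subgroups are exactly the $M/N$ with $N \leq M \trianglelefteq_O G$) says $V/N$-projectivity of $U$ is equivalent to the same condition but quantified only over $M \geq N$, using $U_{M/N} = U_M$ and that $(V/N)(M/N)$ projects to $VM/M$ in $G/M$.

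The bridge between the two quantifier ranges is the classical finite-group sublemma: if $H$ is a finite group, $K \trianglelefteq H$, and $X$ is a $kH$-module on which $K$ acts trivially, then for any subgroup $L$ with $K \leq L \leq H$, the module $X$ is $L$-projective in $kH$ if and only if it is $L/K$-projective in $k[H/K]$. This is immediate from Higman's criterion (the finite analogue of Theorem \ref{teo:projrel}(8)), since coset representatives of $L$ in $H$ descend to coset representatives of $L/K$ in $H/K$ and the conjugation action of $g \in H$ on $\tn{End}_k(X)$ factors through the image of $g$ in $H/K$ because $K$ acts trivially; thus the traces $\tn{Tr}^H_L$ and $\tn{Tr}^{H/K}_{L/K}$ compute the same endomorphism. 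Apply this sublemma with $H = G/M$, $K = MN/M$, $X = U_M$ and $L = VM/M$ (which contains $K$ since $V \geq N$): the resulting condition in $k[G/MN]$ concerns the subgroup $(VM/M)/(MN/M) = VM/MN = VMN/MN$, which is precisely the condition supplied by the hypothesis at $M' = MN/N$ in the $k\db{G/N}$ description. Both directions of the claim follow. I anticipate the main obstacle to be exactly the case of open normal subgroups $M$ of $G$ that do not contain $N$; this is resolved by passing to $MN$ and invoking the finite-group sublemma, after which all remaining steps are routine bookkeeping in finite quotients together with the closure identity $H = \bigcap_M HM$ for closed subgroups of a profinite group.
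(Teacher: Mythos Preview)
Your proposal is correct, and the overall structure of your deduction of the lemma from your intermediate claim mirrors the paper's argument. The difference lies in how the claim itself is established.

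The paper does not isolate your biconditional claim. Instead it observes directly that since $N\trianglelefteq G$, $N\leqslant R$, and $N$ acts trivially on $U$, the normal subgroup $N$ acts trivially on $U\res{R}\ind{G}$, and then invokes Lemma~\ref{lem:propertiescoinvtechn} to obtain the isomorphism of $k\db{G}$-modules
\[
U\res{R}\ind{G}\;\cong\;(U\res{R}\ind{G})_{N}\;\cong\;U_{N}\res{R/N}\ind{G/N}\;\cong\;U\res{R/N}\ind{G/N}.
\]
This single isomorphism simultaneously yields both directions you need: applied with $R=QN$ it shows $U$ is $QN/N$-projective in $k\db{G/N}$ (so the vertex $R/N$ may be taken inside $QN/N$), and applied with the vertex $R/N$ it shows $U$ is $R$-projective in $k\db{G}$. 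Your route via Proposition~\ref{prop:charrelprojcoinv} and the finite Higman criterion is a valid alternative that trades this one-line module identification for a reduction to finite quotients; it has the merit of isolating the general principle (your claim for arbitrary $V\geqslant N$), at the cost of more bookkeeping.

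One remark: your careful finite-quotient argument that $(QN)^h\leqslant V\leqslant QN$ forces $V=QN$ is exactly the standard fact that in a profinite group a closed subgroup cannot be properly conjugate into itself; the paper invokes this step without comment (``so that $R=QN$''), so your explicit justification is welcome but not a point of divergence in method.
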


\begin{proof}
	The module $U$ has vertex $R/N$ as a $k[G/N]$-module, for some $N\leq R\leq QN$.  Note that $N$ acts trivially on $U\res{R}\ind{G}$, so that by Lemma \ref{lem:propertiescoinvtechn}
	$$U\res{R}\ind{G}\cong (U\res{R}\ind{G})_{N}\cong U_{N}\res{R/N}\ind{G/N}\cong U\res{R/N}\ind{G/N}.$$
	Hence $U\ds U\downarrow_{R}\uparrow^{G}$ as a $k\db{G}$-module. Thus $U$ is relatively $R$-projective, and hence ${}^g\!QN\leqslant R\leqslant QN$ for some $g\in G$ by Proposition \ref{teo:verticesareconjugate}, so that $R=QN$.
\end{proof}

\begin{prop}\label{prop:vtxopeninP}
	Let $U$ be an indecomposable finite dimensional $k\db{G}$-module with vertex $Q$. If $P$ is a $p$-Sylow subgroup of $G$ containing $Q$, then $Q$ is open in $P$. 
\end{prop}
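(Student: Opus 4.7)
The plan is to reduce to the classical finite-group statement and then lift back. Since $U$ is finite-dimensional and the $G$-action is continuous, $K := \ker(G \to \End_k(U))$ is open normal in $G$. Writing $\bar G := G/K$, $\bar P := PK/K$, and $\bar Q := QK/K$, the group $\bar G$ is finite, $\bar P$ is a Sylow $p$-subgroup of $\bar G$, and by Lemma \ref{lem:vtxsimplemodule} $\bar Q$ is the vertex of $U$ regarded as a $k\bar G$-module, with $\bar Q \leqslant \bar P$. In particular $[\bar P : \bar Q]$ is finite.

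From the surjections $P \onto \bar P$ and $Q \onto \bar Q$ with kernels $P \cap K$ and $Q \cap K$ respectively, one reads off $[P : Q] = [\bar P : \bar Q] \cdot [P \cap K : Q \cap K]$, so it suffices to exhibit a $G$-conjugate of $Q$, still contained in $P$, that contains $P \cap K$. Note that $P \cap K$ is itself a Sylow $p$-subgroup of $K$: it is pro-$p$, and $[K : P \cap K] = [KP : P]$ divides the pro-$p'$ index $[G : P]$.

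Restricting $U$ to $K$ gives a direct sum of copies of the trivial $k\db{K}$-module $k$, whose vertex in $K$ is a Sylow $p$-subgroup of $K$ (the standard finite-group fact transfers directly via the characterization in Theorem \ref{teo:projrel}). On the other hand, from $U \ds U\res{Q}\ind{G}$ and the pseudocompact analog of Mackey's decomposition,
\[
(U\res{Q}\ind{G})\res{K} \iso \bigoplus_{KgQ \in K\backslash G/Q}({}^g U\res{K \cap gQg^{-1}})\ind{K},
\]
comparing vertices of indecomposable summands forces $K \cap gQg^{-1}$ to contain a Sylow $p$-subgroup of $K$ for some $g \in G$. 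Conjugating further within $K$ to move this Sylow onto $P \cap K$, and then by Sylow conjugacy in $G$ to place the resulting pro-$p$ subgroup inside $P$, we obtain a vertex $Q'$ of $U$ with $Q' \leqslant P$ and $P \cap K \leqslant Q'$. Hence $[P : Q'] = [\bar P : \bar Q']$ is finite.

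Finally, $Q$ and $Q'$ are $G$-conjugate closed subgroups of $G$ (both being vertices of $U$), so they have the same cardinality as topological groups, and therefore $[P : Q] = [P : Q']$ is finite as well; this is precisely the statement that $Q$ is open in $P$. The principal obstacle is the pseudocompact Mackey decomposition invoked above, which is not formally stated in the preceding material; I expect it to follow from the classical proof together with finite-dimensional Krull--Schmidt and the locality of the endomorphism ring guaranteed by Proposition \ref{prop:localEndring}.
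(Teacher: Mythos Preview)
Your approach is quite different from the paper's, and considerably more involved. The paper's argument is a two-line reduction to a divisibility result of Green: for each open normal $N$ acting trivially on $U$, Lemma~\ref{lem:vtxsimplemodule} gives that $U$ has vertex $QN/N$ as a $k[G/N]$-module, and Green's theorem for finite groups (\cite[Theorem~9, Corollary~2]{Gr}) says $|PN/N:QN/N|$ divides $\dim_k U$. Taking the supernatural lcm over $N$ yields that $|P:Q|$ divides $\dim_k U$ and is therefore finite. No Mackey decomposition, no conjugation gymnastics.

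Your route can be made to work, but as written it has real gaps. The principal one you already flag: a Mackey decomposition for $(U\!\res{Q}\!\ind{G})\!\res{K}$ as pseudocompact $k\db{K}$-modules is nowhere established in the paper (Lemma~\ref{lem:Tracemapproperties} gives Mackey only for trace maps on $G$-algebras). With $K$ open normal the double coset set is finite, so the statement is plausible, but it is not a consequence of anything proved here and would need its own argument. Two further steps are glossed over. First, ``conjugating by Sylow in $G$'' to obtain $Q'\leqslant P$ with $P\cap K\leqslant Q'$ is not one move: from $P\cap K\leqslant gQg^{-1}$ you must first conjugate into $PK$, then use that $P$ is Sylow in $PK$, and then check the resulting conjugate still meets $K$ in exactly $P\cap K$; none of this is automatic. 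Second, your closing sentence is not right as stated: ``same cardinality as topological groups'' does not by itself force equal indices in $P$. What you need is that conjugate closed subgroups have equal supernatural order, so $|P:Q|=|P|/|Q|=|P|/|Q'|=|P:Q'|$; phrase it that way. Finally, the assertion that the trivial $k\db{K}$-module has a full Sylow $p$-subgroup as vertex is true but is itself a special case of the proposition you are proving --- it can be handled directly via the augmentation of $k\db{K/R}$, but you should say so rather than invoke it as ``standard''.
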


\begin{proof}
	We must check that the index of $Q$ in $P$ is finite.   Since $U$ is finite dimensional, there is a cofinal system of open normal subgroups $N$ of $G$ acting trivially on $U$.  By Lemma \ref{lem:vtxsimplemodule}, $U$ has vertex $QN/N$ as a $k[G/N]$-module. Furthermore, by \cite[Proposition 2.2.3]{W} $PN/N$ is a Sylow $p$-subgroup of $G/N$ and so by \cite[Theorem 9 and Corollary 2]{Gr}, $|PN/N:QN/N|$ divides the dimension of $U$ for each $N$, and hence so does $|P:Q|$, showing that $Q$ is open in $P$.
\end{proof}

\section{Trace maps}\label{sectionTraceMaps}

Let $G$ be a profinite group and $A$ a pseudocompact $k$-algebra. We say that $A$ is a pseudocompact \emph{$G$-algebra} if $A$ is endowed with a continuous action $G\times A\to A$ of $G$ on $A$, written $(g,a)\mapsto {}^g\!a$ for all $a\in A$ and $g\in G$, such that the continuous map sending $a\in A$ to ${}^g\!a$ is a $k$-algebra automorphism. If $A,B$ are pseudocompact $G$-algebras, a continuous $k$-algebra homomorphism $f:A\to B$ is a \emph{homomorphism of $G$-algebras} if $f({}^g\!a)={}^g\!f(a)$ for all $a\in A$ and $g\in G$.

If $H$ is a closed subgroup of $G$, the \emph{subalgebra of $H$-fixed points} of $A$ is defined by 
$$A^{H}:=\{a\in A\,|\, {}^{h}a=a\,\forall h\in H\}.$$

\begin{defi}\label{def:tracemap}
	Let $A$ be a pseudocompact $G$-algebra and $H$ an open subgroup of $G$. The corresponding \emph{trace map} is the continuous linear map
	\begin{eqnarray*}
		\tn{Tr}_{H}^{G}:A^{H}&\to& A^{G}\\
		a&\mapsto& \sum_{g\in G/H}{}^g\!a,
	\end{eqnarray*}
	where $G/H$ denotes a set of left coset representatives of $H$ in $G$.
\end{defi}

\begin{rem}\label{rem:twomeaningsofTr}
	When $U$ is a finite dimensional $k\db{G}$-module, $\tn{End}_k(U)$ is a finite dimensional $G$-algebra with action $({}^g\!\rho)(u) := g\rho(g^{-1}u)$, and furthermore when $H$ is a closed subgroup of $G$ then $\tn{End}_k(U)^H = \tn{End}_{k\db{H}}(U)$.  It follows in particular that the notations $\tn{Tr}_H^G$ used in Definitions \ref{def:tracmap} and \ref{def:tracemap} are consistent for finite groups.  Formalizing the consistency for profinite groups is made awkward by the fact that $\tn{End}_k(U)$ is not pseudocompact when $U$ has infinite dimension, so to avoid unnecessary technicalities we will not attempt to do so.  The overlap of notation will not cause any difficulties.
\end{rem}

We list some basic properties of the trace map, whose proofs carry through just as for finite groups, see for instance \cite[Lemma 3.6.3]{B}.

\begin{lem}\label{lem:Tracemapproperties}
	Let $G$ be a profinite group and $A$ a pseudocompact $G$-algebra. Then
	\begin{enumerate}
		\item If $H$ is an open subgroup of $G$, for any $a\in A^{H}$ and $b\in A^{G}$ we have $b\,\tn{Tr}_{H}^{G}(a)=\tn{Tr}_{H}^{G}(ba)$ and $\tn{Tr}_{H}^{G}(a)b=\tn{Tr}_{H}^{G}(ab)$.
		\item If $H,L$ are open subgroups of $G$ with $L\leq H$, then $\tn{Tr}_{H}^{G}\circ \tn{Tr}_{L}^{H}=\tn{Tr}_{L}^{G}$.
		\item (Mackey's Formula) If $H,L$ are closed subgroups of $G$ with $L$ open, then for any $a\in A^{L}$ we have 
		$$\tn{Tr}_{L}^{G}(a)=\sum\limits_{g\in H\backslash G/L}\tn{Tr}_{H\cap {}^g\!L}^{H}({}^g\!a),$$
		where $H\backslash G/L$ denotes a set of double coset representatives.
	\end{enumerate}
\end{lem}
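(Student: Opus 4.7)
The plan is to mimic the standard finite-group proofs line by line; the only thing that needs checking is that the profinite setting causes no problems, and it does not, because openness of $H$ in parts (1)--(2) and of $L$ in part (3) keeps every sum that appears finite. In particular, in (3) the set $H\backslash G/L$ is finite (being a quotient of the finite set $G/L$) and each index $[H:H\cap {}^gL]$ is finite because $H\cap {}^gL$ is open in $H$. So all the trace maps that appear are well defined.

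For (1) I would expand
$$b\,\tn{Tr}_H^G(a)=\sum_{g\in G/H}b\cdot{}^ga=\sum_{g\in G/H}{}^gb\cdot{}^ga=\sum_{g\in G/H}{}^g(ba),$$
using that $b\in A^G$ for the middle equality and that the $G$-action is by algebra automorphisms for the last. Since $A^G\subseteq A^H$ we have $ba\in A^H$, so the last sum is $\tn{Tr}_H^G(ba)$. The right-hand version is symmetric.

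For (2) I would pick transversals $T$ of $G/H$ and $S$ of $H/L$ and note that $TS$ is a transversal of $G/L$. Expanding $\tn{Tr}_H^G(\tn{Tr}_L^H(a))$ over $T\times S$ and using associativity of the $G$-action on $A$ yields $\sum_{(t,s)\in T\times S}{}^{ts}a=\tn{Tr}_L^G(a)$.

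For Mackey's formula I would start from the (finite) double-coset decomposition $G=\bigsqcup_{g\in H\backslash G/L}HgL$ and refine each double coset as $HgL=\bigsqcup_{h\in H/(H\cap {}^gL)}hgL$, so that the elements $\{hg\}$, as $g$ ranges over a chosen set of double-coset representatives and $h$ over a transversal of $H/(H\cap {}^gL)$, form a transversal of $G/L$. Substituting this into the definition of $\tn{Tr}_L^G(a)$ and grouping by double coset gives
$$\tn{Tr}_L^G(a)=\sum_{g\in H\backslash G/L}\;\sum_{h\in H/(H\cap {}^gL)}{}^h({}^ga)=\sum_{g\in H\backslash G/L}\tn{Tr}_{H\cap {}^gL}^H({}^ga),$$
where the last equality uses that ${}^ga\in A^{{}^gL}\subseteq A^{H\cap {}^gL}$, so that the inner trace makes sense. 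There is no substantive obstacle: the proof really is the finite one, and the only thing worth pausing over is confirming that the openness hypotheses make every indexing set finite so that each sum is legitimate.
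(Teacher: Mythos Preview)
Your proposal is correct and matches the paper's approach exactly: the paper simply remarks that the proofs carry through just as for finite groups (citing \cite[Lemma 3.6.3]{B}) without writing out any details, and what you have done is precisely to supply those standard finite-group computations while noting that the openness hypotheses keep every index finite. There is nothing to add.
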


Definition \ref{def:tracemap} does not make sense for subgroups of $G$ that are not open.  We present two alternatives.  Firstly:

\begin{defi}\label{def:traceset}
	Let $A$ be a pseudocompact $G$-algebra. If $H$ is a closed subgroup of $G$, define $$\tn{oTr}_{H}^{G}(A^{H}) := \bigcap_{N\trianglelefteq_{O}G}\tn{Tr}_{HN}^{G}(A^{HN}).$$
\end{defi}

The letter ``o'' indicates ``open subgroups'' and is only included to minimize notational confusion.  Observe that, while $\tn{oTr}_H^G(A^H)$ is a well-defined subset of $A^G$, one cannot really regard $\tn{oTr}_H^G$ as a map.  In order to obtain a map, for simplicity we restrict to the case where $A = k\db{G}$, a $G$-algebra with action from $G$ given by conjugation, so for $g\in G$ and $x\in k\db{G}$ we have ${}^g\!x := gxg^{-1}$.  The reason to restrict is that $k\db{G}$ gives us access to the algebra homomorphisms $\varphi_{MN}, \varphi_N$ of Remark \ref{obs:invsystemforcoinvariants}, and because we will require only this special case.  The definition relies on Sylow's Theorem for profinite groups, for which see for instance \cite[Corollary 2.3.6]{ZR}.  We use the standard trick of realizing an inverse limit $A = \invlim_I \{A_i,\varphi_{ij}\}$ as
$$A = \bigg\{(a_i)_{i\in I}\in \prod_I A_i\,|\,\varphi_{ij}(a_j) = a_i\,\forall i\leqslant j\bigg\}.$$

If ever $n$ is a natural number, write $n=n_p\cdot n_{p'}$ where $n_p$ is a power of $p$ and $p$ does not divide $n_{p'}$.  Recall \cite[\S 2.3]{ZR} that if ever $H\leqslant L$ are closed subgroups of the profinite group $G$, the index of $H$ in $L$ is a well-defined supernatural number, with a well-defined decomposition 
$$|L:H| = |L:H|_p\cdot |L:H|_{p'}.$$

\begin{defi}\label{def:modifiedtrace}
	\begin{enumerate}
		\item Let $G$ be a finite group and $H\leqslant L$ subgroups of $G$.  The modified trace function $\widetilde{\tn{Tr}}_H^L : kG^H\to kG^L$ is defined as follows:
		$$\widetilde{\tn{Tr}}_H^L(x) := \frac{1}{|L:H|_{p'}}\tn{Tr}_{H}^{L}(x).$$
		
		\item Let $G$ be a profinite group and $H\leqslant L$ closed subgroups of $G$.  Suppose that $|L:H|_{p}$ is a finite number. The modified trace function $\widetilde{\tn{Tr}}_H^L : k\db{G}^H\to k\db{G}^L$ is defined as follows:
		$$\widetilde{\tn{Tr}}_H^L(x) := \left(\widetilde{\tn{Tr}}_{HN/N}^{LN/N}(\varphi_N(x))\right)_{N\in \mathcal{N}} =  \left(\frac{1}{|LN:HN|_{p'}}\tn{Tr}_{HN/N}^{LN/N}(\varphi_N(x))\right)_{N\in \mathcal{N}},$$
		where $\mathcal{N}$ is the set of open normal subgroups of $G$ such that $|LN:HN|_p = |L:H|_p$.   
	\end{enumerate}
\end{defi}

Some observations are called for:
\begin{itemize}
	\item The set $\mathcal{N}$ is non-empty (and hence cofinal in the set of all open normal subgroups of $G$) because $|L:H|_{p}$ is finite.  We do not define a trace function between subgroups without this property. In practice the subgroups $H$ to which we will apply $\widetilde{\tn{Tr}}_H^L$ are defect groups of a block of $G$, which we will see in Proposition \ref{prop:defgrpopen} are open subgroups of a $p$-Sylow subgroup of $G$.
	
	\item We prove only those properties of $\widetilde{\tn{Tr}}_H^L$ that we require to show Brauer's first main theorem in Section \ref{SectionBrauerCorr}.  There may be some value to a more systematic study of the linear map $\widetilde{\tn{Tr}}_H^L$.
\end{itemize}

We must check that the element $\widetilde{\tn{Tr}}_H^L(x)$ defined in the second part of the above definition is a well-defined element of $k\db{G}^L$.  This is a formal consequence of the first part of the following lemma (it is to make the vector compatible that we restrict our $N$ to the cofinal set $\mathcal{N}$).

\begin{lem}\label{Lemma tildeTr properties}
	Let $H\leqslant L$ be closed subgroups of $G$ with $|L:H|_p$ finite. 
	\begin{enumerate}
		\item Given $N\leqslant M$ in $\mathcal{N}$, we have $$\varphi_{MN}\widetilde{\tn{Tr}}_{HN/N}^{LN/N} = \widetilde{\tn{Tr}}_{HM/M}^{LM/M}\varphi_{MN}.$$
		\item If $|G:H|_{p}$ is finite, then
		$$\widetilde{\tn{Tr}}_L^G\widetilde{\tn{Tr}}_H^L = \widetilde{\tn{Tr}}_H^G.$$
		\item If $a\in k\db{G}^H$ and $b\in k\db{G}^L$ then $b\,\widetilde{\tn{Tr}}_{H}^{L}(a)=\widetilde{\tn{Tr}}_{H}^{L}(ba)$ and $\widetilde{\tn{Tr}}_{H}^{L}(a)b=\widetilde{\tn{Tr}}_{H}^{L}(ab)$.
	\end{enumerate}
\end{lem}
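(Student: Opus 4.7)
Each of the three parts reduces to the corresponding statement about finite groups by working componentwise along a cofinal system of open normal subgroups. The substantive step is part 1, which is precisely what is needed to see that the vector written in Definition \ref{def:modifiedtrace}(2) is compatible with the bonding maps $\varphi_{MN}$, so that it really does define an element of $k\db{G}=\invlim_{N}k[G/N]$ via the cofinality principle recalled before Proposition \ref{prop:charrelprojcoinv}. Parts 2 and 3 are then formal consequences.

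For part 1, fix $N\leqslant M$ in $\mathcal{N}$ and $y\in k[G/N]^{HN/N}$. Because $\varphi_{MN}$ is the algebra map induced by the group quotient $G/N\onto G/M$, it intertwines conjugation: $\varphi_{MN}({}^{s}y)={}^{\varphi_{MN}(s)}\varphi_{MN}(y)$ for every $s\in LN/N$. The induced map of transversals $LN/HN\to LM/HM$, $sHN\mapsto sHM$, is surjective (using $LM=LN\cdot M$), and a direct count shows each fibre has cardinality $|HM\cap LN:HN|$; consequently
\[
\varphi_{MN}\bigl(\tn{Tr}^{LN/N}_{HN/N}(y)\bigr)=|HM\cap LN:HN|\cdot\tn{Tr}^{LM/M}_{HM/M}\bigl(\varphi_{MN}(y)\bigr).
\]
The tower identity $|LN:HN|=|LM:HM|\cdot|HM\cap LN:HN|$, combined with the hypothesis $|LN:HN|_{p}=|L:H|_{p}=|LM:HM|_{p}$, forces $|HM\cap LN:HN|_{p}=1$, whence $|LN:HN|_{p'}=|LM:HM|_{p'}\cdot|HM\cap LN:HN|$. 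Dividing the displayed equation through by $|LN:HN|_{p'}$ yields part 1.

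For part 2, each of the cofinal sets $\mathcal{N}_{HL},\mathcal{N}_{LG},\mathcal{N}_{HG}$ implicit in the three trace maps is downward closed (once a finite $p$-part of the form $|L':H'|_{p}$ is attained at some $N_{0}$, it is attained at every $N\leqslant N_{0}$), so their intersection is cofinal as well. For any $N$ in this intersection, dividing finite-group trace transitivity (Lemma \ref{lem:Tracemapproperties}(2)) by the multiplicative identity $|G:H|_{p'}=|G:L|_{p'}\cdot|L:H|_{p'}$ gives
\[
\widetilde{\tn{Tr}}^{G/N}_{LN/N}\widetilde{\tn{Tr}}^{LN/N}_{HN/N}\bigl(\varphi_{N}(x)\bigr)=\widetilde{\tn{Tr}}^{G/N}_{HN/N}\bigl(\varphi_{N}(x)\bigr),
\]
which is the required equality of $N$-components. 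Part 3 follows by the same componentwise strategy, applying the finite-group bimodule identity in Lemma \ref{lem:Tracemapproperties}(1) at each $N\in\mathcal{N}$ and using that each $\varphi_{N}$ is an algebra homomorphism so that $\varphi_{N}(b)\varphi_{N}(a)=\varphi_{N}(ba)$. The main obstacle to the whole proof is the tower-and-$p'$-part calculation carried out in part 1.
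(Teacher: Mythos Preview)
Your argument is correct and matches the paper's proof essentially line for line. The paper phrases the fibre count in part~1 via the second isomorphism theorem (observing $(HM)(LN)=LM$ so that $|LN:HM\cap LN|=|LM:HM|$) rather than via the surjective coset map, but this is the same computation; parts~2 and~3 are dispatched in the paper exactly as you do, by reduction to the finite case and part~1.
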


\begin{proof}
	\begin{enumerate}
		\item Note first that $(HM)(LN) = LM$ and so, by the second isomorphism theorem, 
		$$|LN:HM\cap LN| = |LM:HM|.$$  
		We have
		$$|LN:HN| = |LN:HM\cap LN|\cdot|HM\cap LN: HN| = |LM:HM|\cdot|HM\cap LN: HN|$$
		and hence, since $|LN:HN|_p = |LM:HM|_p$, we have
		$$|LN:HN|_{p'} = |LM:HM|_{p'}\cdot|HM\cap LN: HN|.$$
		
		Now
		\begin{align*}
			\varphi_{MN}\widetilde{\tn{Tr}}_{HN/N}^{LN/N}(x)
			& = \frac{1}{|LN:HN|_{p'}}\sum_{g\in LN/HN}{}^{gM}\varphi_{MN}(x) \\
			& = \frac{|HM\cap LN:HN|}{|LN:HN|_{p'}}\cdot\tn{Tr}_{HM/M}^{LM/M}\varphi_{MN}(x) \\ 
			& = \frac{1}{|LM:HM|_{p'}}\cdot\tn{Tr}_{HM/M}^{LM/M}\varphi_{MN}(x) \\
			& = \widetilde{\tn{Tr}}_{HM/M}^{LM/M}\varphi_{MN}(x)
		\end{align*}
		as required.
		
		\item For finite groups this is an easy check using the transitivity of the normal trace function (cf.\ Lemma \ref{lem:Tracemapproperties}).  The general case follows from this and Part 1.
		
		\item This also follows from the corresponding property for finite groups.
	\end{enumerate}
\end{proof}

\begin{lem}\label{lem:invlimoffintrace}
	Let $G$ be a profinite group and $D$ a closed subgroup of $G$ such that $|G:D|_p$ is finite.  Then
	$\tn{oTr}_{D}^{G}(k\db{G}^{D})\subseteq \tn{Im}(\widetilde{\tn{Tr}}_D^G)$.
\end{lem}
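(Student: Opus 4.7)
The plan is to build a preimage for $y$ component by component in the inverse system defining $k\db{G}^D$ and conclude by a linear-compactness argument. Fix $y \in \tn{oTr}_D^G(k\db{G}^D)$ and, for each $N \in \mathcal{N}$, define the fibre
\[
X_N := \left\{ \xi \in k[G/N]^{DN/N} \,:\, \widetilde{\tn{Tr}}_{DN/N}^{G/N}(\xi) = \varphi_N(y) \right\}.
\]
Provided it is non-empty, this is an affine subspace --- hence a closed linear variety --- of the finite-dimensional discrete space $k[G/N]^{DN/N}$.

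First I would show $X_N \neq \emptyset$. Since $y \in \tn{oTr}_D^G(k\db{G}^D)$, we may write $y = \tn{Tr}_{DN}^G(w)$ for some $w \in k\db{G}^{DN}$. The algebra map $\varphi_N\colon k\db{G} \to k[G/N]$ is $G$-equivariant and $DN$ is open, so $\varphi_N$ intertwines $\tn{Tr}_{DN}^G$ with $\tn{Tr}_{DN/N}^{G/N}$; thus $\varphi_N(y) = \tn{Tr}_{DN/N}^{G/N}(\varphi_N(w))$. Because $N \in \mathcal{N}$, the integer $|G:DN|_{p'}$ is coprime to $p$ and therefore invertible in $k$, so $\xi := |G:DN|_{p'}\varphi_N(w)$ is a valid element of $X_N$. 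This is the only place where the hypothesis $|G:D|_p < \infty$ (and the restriction to $N \in \mathcal{N}$) is really used.

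Next I would check that $\{X_N\}_{N \in \mathcal{N}}$ is an inverse system under the maps $\varphi_{MN}$. This is immediate from Lemma \ref{Lemma tildeTr properties}(1): for $\xi \in X_N$,
\[
\widetilde{\tn{Tr}}_{DM/M}^{G/M}(\varphi_{MN}(\xi)) = \varphi_{MN}(\widetilde{\tn{Tr}}_{DN/N}^{G/N}(\xi)) = \varphi_{MN}(\varphi_N(y)) = \varphi_M(y),
\]
so $\varphi_{MN}(\xi) \in X_M$. Then Lemma \ref{lem:propertiesofLCspaces}(3) gives $\varprojlim_{N \in \mathcal{N}} X_N \neq \emptyset$. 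Because $\mathcal{N}$ is cofinal among the open normal subgroups of $G$, a compatible family over $\mathcal{N}$ extends uniquely to a compatible family over all of them and identifies with an element $x \in \varprojlim_{N \trianglelefteq_O G} k[G/N]^{DN/N} = k\db{G}^D$; by the very definition of $\widetilde{\tn{Tr}}_D^G$ this $x$ satisfies $\widetilde{\tn{Tr}}_D^G(x) = y$, completing the proof.

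I do not expect serious obstacles here: the argument is a routine Mittag-Leffler-style lift along a surjective inverse system of non-empty closed linear varieties. The only points to handle carefully are the invertibility of $|G:DN|_{p'}$ in $k$ (which is exactly why $\mathcal{N}$ is designed as it is) and the identification $k\db{G}^D = \varprojlim_N k[G/N]^{DN/N}$, which holds because taking $D$-fixed points commutes with inverse limits of discrete spaces carrying continuous $D$-actions. The remaining bookkeeping is entirely formal once the correct fibre $X_N$ has been identified.
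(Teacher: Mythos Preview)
Your proof is correct and follows the same strategy as the paper: both reduce to showing that $\varphi_N(y)\in\tn{Tr}_{DN/N}^{G/N}(k[G/N]^{DN/N})$ for each $N$ via the computation $\varphi_N(\tn{Tr}_{DN}^G(w))=\tn{Tr}_{DN/N}^{G/N}(\varphi_N(w))$. The only difference is that the paper simply asserts ``it is sufficient to check'' the componentwise condition, whereas you spell out the underlying linear-compactness lift (your fibres $X_N$, Lemma~\ref{Lemma tildeTr properties}(1), and Lemma~\ref{lem:propertiesofLCspaces}(3)) that justifies this sufficiency.
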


\begin{proof}
	It is sufficient to check that $\varphi_N(y)$ is in $\tn{Tr}_{DN/N}^{G/N}(k[G/N]^{DN/N})$ for any $y$ in $\tn{oTr}_{D}^{G}(k\db{G}^{D})$ and open normal subgroup $N$ of $G$.  But by definition $y = \sum_{g\in G/DN}{}^g\!a$ for some $a\in k\db{G}^{DN}$ and so
	$$\varphi_{N}(y) = \varphi_{N}\left(\sum_{g\in G/DN}{}^g\!a\right) = \tn{Tr}_{DN/N}^{G/N}(\varphi_N(a))\in \tn{Tr}_{DN/N}^{G/N}(k[G/N]^{DN/N}),$$
	as required.
\end{proof}

\section{Blocks of profinite groups}\label{sectionBlocks}

Throughout this section, $k$ is a discrete field of characteristic $p$ and $A$ is a pseudocompact $k$-algebra. An element $e\in A$ is \emph{idempotent} if $e^{2}=e$. Two idempotents $e,f$ of $A$ are \emph{orthogonal} if $ef=fe=0$. A non-zero idempotent is \emph{primitive} if it cannot be written as the sum of two non-zero orthogonal idempotents. We denote by $\tn{Z}(A)$ the center of $A$. An idempotent $e\in A$ is \emph{centrally primitive} if $e$ is a primitive idempotent of $\tn{Z}(A)$.

By \cite[IV. \S 3, Corollary 1,2]{G} applied to $\tn{Z}(A)$, there is a unique set of pairwise ortogonal centrally primitive idempotents $E=\{e_{i}\mbox{ : }i\in I\}$ in $A$ such that
\begin{eqnarray}\label{eq:blockdecomp}
	A=
	\prod\limits_{i\in I}Ae_{i}.
\end{eqnarray}
We call $E$ the complete set of centrally primitive orthogonal idempotents of $A$.  Any subset $F$ of $E$ is \emph{summable}, in the sense that each open ideal $I$ of $A$ contains all but finitely many of the elements of $F$, so that $\sum_{f\in F}f$ is a well-defined element of $A$.  We have $\sum_{e\in E}e = 1_A$.

Each $Ae_i$ is a pseudocompact algebra (with identity $e_i$), called a \emph{block} of $A$. The elements of $E$ are called \emph{block idempotents}.  The decomposition (\ref{eq:blockdecomp}) is called the \emph{block decomposition} of $A$.  Note that the block decomposition of $A$ is unique.

In what follows, $\overline{Y}$ denotes the topological closure of the subset $Y$ of $A$.

\begin{lem}\label{lem:idealsofeAe}
	Let $A$ be a pseudocompact algebra and let $e\in A$ be an idempotent. If $X$ is a not necessarily closed ideal of $A$, then $e\overline{X}e=\overline{eXe}$.
\end{lem}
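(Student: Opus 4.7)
The plan is to prove the two inclusions separately, and the crux is showing that $e\overline{X}e$ is already closed, after which both inclusions become formal.

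First I would observe that the map $\mu : A \to A$ defined by $\mu(a) = eae$ is continuous, since multiplication in the pseudocompact algebra $A$ is continuous and so are left and right multiplication by the fixed element $e$. From the general topological fact that continuous maps send closures into closures of images, one immediately gets
$$e\overline{X}e \;=\; \mu(\overline{X}) \;\subseteq\; \overline{\mu(X)} \;=\; \overline{eXe},$$
which handles one of the two inclusions with no work beyond continuity.

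For the reverse inclusion $\overline{eXe}\subseteq e\overline{X}e$, the strategy is to show that $e\overline{X}e$ is closed in $A$; once this is done the inclusion $eXe\subseteq e\overline{X}e$ (which holds because $X\subseteq\overline{X}$) forces $\overline{eXe}\subseteq e\overline{X}e$. To prove that $e\overline{X}e$ is closed, I would view $\overline{X}$ as a closed $k$-subspace of the pseudocompact $k$-module $A$, hence itself a pseudocompact $k$-module. The restriction $\mu|_{\overline{X}}: \overline{X}\to A$ is then a continuous homomorphism of pseudocompact $k$-modules, and Lemma \ref{JK-lem}(2) guarantees that its image $\mu(\overline{X}) = e\overline{X}e$ is linearly compact and therefore closed in $A$.

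Combining the two inclusions yields $e\overline{X}e = \overline{eXe}$. The one step that requires genuine input from the theory (rather than pure point-set topology) is the closedness of $e\overline{X}e$, and this is exactly what the linear compactness of pseudocompact modules delivers via Lemma \ref{JK-lem}(2); note that the hypothesis that $X$ be an ideal is not actually used — the same argument works verbatim for any subset of $A$.
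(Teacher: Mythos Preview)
Your proof is correct and follows essentially the same approach as the paper: both arguments use the continuity of the map $a\mapsto eae$ together with Lemma~\ref{JK-lem}(2) to conclude that $e\overline{X}e$ is closed, and then the continuity alone for the inclusion $e\overline{X}e\subseteq\overline{eXe}$. Your remark that the ideal hypothesis on $X$ is never used is also correct.
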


\begin{proof}
	The map $\alpha:A\to eAe$ sending $a$ to $eae$ is continuous so that by Lemma \ref{JK-lem}, $e\overline{X}e$ is a closed subset of $eAe$ containing $eXe$, and hence $\overline{eXe}\subseteq e\overline{X}e$.  The other inclusion is direct from the continuity of $\alpha$.
\end{proof}

\begin{lem}\label{lem:Roslem}
	Let $A$ be a pseudocompact $k$-algebra and $e$ a primitive idempotent of $A$. Then
	\begin{enumerate}
		\item $eAe$ is a local algebra.
		\item (Rosenberg's Lemma) If $\mathcal{J}$ is a set of closed ideals of $A$ and $e\in \overline{\sum_{I\in\mathcal{J}}I}$, then there is some $I\in\mathcal{J}$ such that $e\in I$.
	\end{enumerate}
\end{lem}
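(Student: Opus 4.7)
The proof splits naturally into the two items. For Part 1 the strategy is to realise $eAe$ as an endomorphism ring and invoke the argument already used for Proposition \ref{prop:localEndring}. First I would observe that $Ae$ is a pseudocompact left $A$-module — it is the continuous image of $A$ under right multiplication by $e$, hence closed in $A$ by Lemma \ref{JK-lem} — and is finitely generated by $e$. The module $Ae$ is indecomposable: any decomposition $Ae = X\oplus Y$ as a pseudocompact left $A$-module gives orthogonal projection idempotents of $\tn{End}_{A}(Ae)$, which correspond under the natural anti-isomorphism $\tn{End}_{A}(Ae)\iso (eAe)^{\tn{op}}$ given by $\varphi\mapsto\varphi(e)$ to a pair of orthogonal idempotents of $eAe\subseteq A$ whose sum is $e$, contradicting primitivity of $e$ unless the decomposition is trivial. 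The argument of Proposition \ref{prop:localEndring} never uses anything specific to a completed group algebra — only linear compactness of the module and the identification of continuous with abstract endomorphisms of a finitely generated pseudocompact module — so it applies verbatim to the pseudocompact algebra $A$ acting on $Ae$. Hence $\tn{End}_{A}(Ae)$ is a local pseudocompact algebra, and so therefore is $eAe$.

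For Part 2 I would argue by contradiction, relying on Part 1. Let $M$ denote the unique maximal ideal of the local pseudocompact algebra $eAe$; being the Jacobson radical of a pseudocompact algebra, $M$ is closed. By Lemma \ref{lem:idealsofeAe},
$$e = eee \in e\,\overline{\sum_{I\in\mathcal{J}}I}\,e = \overline{\sum_{I\in\mathcal{J}}eIe}.$$
Suppose for contradiction that $e\notin I$ for every $I\in\mathcal{J}$. Then no element $x\in eIe$ can be a unit of $eAe$: if some $x\in eIe$ satisfied $xy=e$ with $y\in eAe$, then $e=xy$ would lie in $I$ because $I$ is a two-sided ideal of $A$. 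Hence $eIe\subseteq M$ for every $I\in\mathcal{J}$, and therefore $\sum_{I\in\mathcal{J}}eIe\subseteq M$. Since $M$ is closed, $\overline{\sum_{I\in\mathcal{J}}eIe}\subseteq M$, forcing $e\in M$ and contradicting the fact that $e$ is the identity of $eAe$ while $M$ is a proper ideal.

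The main obstacle is essentially bookkeeping: one must confirm that the proof of Proposition \ref{prop:localEndring} really does go through for an arbitrary pseudocompact algebra (not just $k\db{G}$), and that the unique maximal ideal of a local pseudocompact algebra agrees with its topological Jacobson radical and is therefore closed. Both facts are standard for pseudocompact algebras but deserve explicit mention or a citation to \cite{Bru} in the final write-up.
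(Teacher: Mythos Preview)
Your proposal is correct and follows essentially the same approach as the paper. For Part~1 the paper likewise reduces to Proposition~\ref{prop:localEndring} via the identification $eAe\cong \End_A(Ae)^{\tn{op}}$ (citing \cite[Lemma~1.3.3]{B} for the finite-dimensional template), and for Part~2 the paper uses exactly your argument: apply Lemma~\ref{lem:idealsofeAe} to land in $\overline{\sum_{I}eIe}$, then use locality of $eAe$ to see that either some $eIe=eAe$ or all $eIe$ lie in the (closed) Jacobson radical, the latter being impossible since $e$ lies in the closure of the sum. Your explicit remarks about why the maximal ideal is closed and why Proposition~\ref{prop:localEndring} applies to an arbitrary pseudocompact algebra are well-placed; the paper leaves both implicit.
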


\begin{proof}
	\begin{enumerate}
		\item This follows as in \cite[Lemma 1.3.3]{B}, using Lemma \ref{prop:localEndring}.
		
		\item By Lemma \ref{lem:idealsofeAe}, $e(\overline{\sum_{I\in\mathcal{J}}I})e=\overline{e(\sum_{I\in\mathcal{J}}I)e}=\overline{\sum_{I\in\mathcal{J}}eIe}$, so that if $e\in\overline{\sum_{I\in\mathcal{J}}I}$ then $e\in\overline{\sum_{I\in\mathcal{J}}eIe}$. Since the algebra $eAe$ is local, each ideal $eIe$, with $I\in\mathcal{J}$, is either contained in the Jacobson radical of $eAe$ or is equal to $eAe$. Thus there is at least one $I\in \mathcal{J}$ such that $eIe=eAe$, and $e$ is contained in this $I$.
	\end{enumerate}
\end{proof}

We say that the pseudocompact $A$-module $U$ \emph{lies} in the block $B$ if $BU=U$ and $B'U=0$ for any block $B'$ different from $B$. If $B$ has unity $e$, then $U$ lies in $B$ if, and only if, $eU=U$ and $fU=0$ for each centrally primitive idempotent $f$ distinct from $e$.  If $U$ lies in $B$ and $V$ is a closed submodule of $U$, then $V$ and $U/V$ lie in $B$. If $U_{1},U_{2}$ are $A$-modules lying in $B$ then $U_{1}\oplus U_{2}$ lies in $B$.

\begin{prop}\label{prop:moddecomp}
	Let $A = \prod B_i$ be the block decomposition of $A$ and let $U$ be a pseudocompact $A$-module. Then $U$ has a unique decomposition of the form $\prod\limits_{i\in I}U_{i},$ where $U_{i}$ is a submodule of $U$ lying in the block $B_{i}$.
\end{prop}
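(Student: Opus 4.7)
The plan is to construct the decomposition explicitly from the centrally primitive orthogonal idempotents and then verify that the resulting map is a topological isomorphism.

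Let $E=\{e_i\,:\,i\in I\}$ be the complete set of centrally primitive orthogonal idempotents of $A$, so $B_i = Ae_i$. For each $i$ set $U_i := e_i U$. Since multiplication by $e_i$ is continuous and $U$ is linearly compact, Lemma \ref{JK-lem} gives that $U_i$ is linearly compact, hence closed in $U$; being central, $e_i$ makes $U_i$ a closed $A$-submodule. Computing directly, $e_i U_i = e_i^2 U = U_i$ and $e_j U_i = e_j e_i U = 0$ for $j\neq i$, so $U_i$ lies in $B_i$. This handles the easy half: constructing candidate summands.

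The heart of the proof is to show that the natural continuous $A$-linear map
\begin{equation*}
\Phi: U\to \prod_{i\in I} U_i,\qquad u\mapsto (e_i u)_{i\in I},
\end{equation*}
is an isomorphism of pseudocompact $A$-modules. The key ingredient is the summability of $E$: if $V$ is any open submodule of $U$ of finite codimension, the annihilator $\tn{Ann}_A(U/V)$ is an open ideal of $A$ (since the action $A\to \tn{End}_k(U/V)$ is continuous with discrete finite-dimensional target), so all but finitely many $e_i$ annihilate $U/V$. For injectivity, given $u\in U$ with $e_i u = 0$ for all $i$, the identity $1 = \sum_i e_i$ reduces modulo any such $V$ to a finite sum of the $e_i u$, so $u\in V$ for every such $V$, whence $u=0$. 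For surjectivity, given $(v_i)\in \prod_i U_i$, the partial sums $u_F = \sum_{i\in F} v_i$ form a Cauchy net (for each $V$, terms with $e_i \in \tn{Ann}_A(U/V)$ satisfy $v_i = e_i v_i \in V$) converging to a unique $u\in U$ with $\Phi(u) = (v_i)$. A continuous bijection between linearly compact spaces is an isomorphism by Lemma \ref{lem:propertiesofLCspaces}(4).

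For uniqueness, suppose $U = \prod_i U_i'$ with each $U_i'$ lying in $B_i$. Because $e_j$ is the identity of $B_j$ and annihilates every other block, $e_j$ acts as the identity on $U_j'$ and as zero on $U_i'$ for $i\neq j$. Applying $e_j$ to the product decomposition yields $U_j' = e_j U = U_j$, which proves the decomposition is unique.

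The main obstacle I expect is the topological part of step two: verifying that $\Phi$ is genuinely surjective and has continuous inverse in the pseudocompact (not just algebraic) sense. Everything hinges on the fact that the centrally primitive idempotents are summable and that each finite-codimension quotient sees only finitely many blocks; once that is in hand, the Cauchy-net argument together with linear compactness closes the gap without needing compactness of $A$ itself.
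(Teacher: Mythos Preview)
Your proof is correct and follows essentially the same approach as the paper: both define $U_i = e_iU$, build the continuous map $U\to\prod_i U_i$ via $u\mapsto(e_iu)$, and finish by invoking Lemma~\ref{lem:propertiesofLCspaces}(4). Your argument is in fact more explicit than the paper's at two points: you justify surjectivity by a Cauchy-net construction (the paper obtains the map as an inverse limit of the maps to finite subproducts and asserts surjectivity without further comment), and you spell out the uniqueness argument by applying $e_j$ to an arbitrary decomposition (the paper simply cites uniqueness of the set $E$).
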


\begin{proof}
	Let $e_{i}$ be the block idempotent of $B_{i}$. Define $U_{i}=e_{i}U$, an $A$-module lying in $B_{i}$.  By Lemmas \ref{JK-lem} and \ref{lem:propertiesofLCspaces}, $\prod\limits_{i\in I}U_{i}$ is a pseudocompact $A$-module.

	Define for each $i\in I$ the continuous homomorphism $\rho_i: U\to U_i$ by $u\mapsto e_iu$, and for each finite subset $K$ of $I$, the continuous homomorphism $\rho_{K}:U\to \prod\limits_{i\in K}U_{i}$ by $\rho_{K}(u)=(\rho_{i}(u))_{i\in K}$.
	The homomorphisms $\rho_{K}$ induce a continuous surjective homomorphism $\rho:U\to \prod\limits_{i\in I}U_{i}$, which is injective since $\sum_{e\in E}e = 1_A$.  As continuous linear bijections between pseudocompact algebras are isomorphisms by Lemma \ref{lem:propertiesofLCspaces}, $U\cong \prod\limits_{i\in I}U_{i}$.  The uniqueness of the decomposition follows from the uniqueness of the complete set of centrally primitive orthogonal idempotents.
\end{proof}

In particular, indecomposable pseudocompact modules lie in a unique block.

\subsection{Finite dimensional modules and blocks}

Recall our convention that the symbols $\varphi_N$ and  $\varphi_{MN}$ are reserved for the canonical projections $\varphi_N : k\db{G}\to k[G/N], \varphi_{MN}: k[G/N]\to k[G/M]$.

\begin{rem}\label{obs:directsystemofcentralprimidemp}
	By \cite[Proposition 6.4]{PJ}, the complete set of centrally primitive orthogonal idempotents $E$ of $k\db{G}$ is a discrete set and can be obtained as the direct limit of the corresponding complete sets $E_N$ for the $k[G/N]$.  We make the direct system explicit, as we will use it in future: if $N\leq M$ are open normal subgroups of $G$, define $\psi_{MN}:E_{M}\to E_{N}$ by sending $c\in E_M$ to the unique centrally primitive idempotent $d$ of $k\left[G/N\right]$ such that $\varphi_{MN}(d)c\neq 0$.  Note that as $c$ is primitive as a central idempotent, this is equivalent to saying that $\varphi_{MN}(d)c = c$.
\end{rem}

\begin{lem}\label{lem:simplemoduleslyinginthesameblockofG/N}
	Let $U,V$ be finite dimensional $k\db{G}$-modules lying in the same block of $G$. Then there is $N_{0}\trianglelefteq_{O}G$ acting trivially on $U$, $V$ and such that $U,V$ lie in the same block of $G/N_{0}$. 
\end{lem}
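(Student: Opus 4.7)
The plan is to exploit the direct-limit description of $E$ from Remark \ref{obs:directsystemofcentralprimidemp}. Since $U$ and $V$ are finite dimensional, the kernels of the corresponding representations of $G$ are open normal subgroups, and their intersection is an open normal subgroup $N_1$ of $G$ acting trivially on both. For every open normal $N \leqslant N_1$, both $U$ and $V$ are finite-dimensional $k[G/N]$-modules; let $f^U_N, f^V_N \in E_N$ be the unique centrally primitive idempotents of $k[G/N]$ in whose blocks $U, V$ respectively lie, and let $e \in E$ be the centrally primitive idempotent of $k\db{G}$ whose block contains both modules.

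The key technical observation is that $f^U_N$ appears as a summand in the decomposition of $\varphi_N(e)$ into centrally primitive idempotents of $k[G/N]$ (and likewise for $f^V_N$). Indeed, $eU=U$ combined with the trivial action of $N$ gives $\varphi_N(e) U = U$; since $f^U_N U=U$ and $f^U_N$ is primitive, the central idempotent $\varphi_N(e) f^U_N$ is either $0$ or $f^U_N$, and the former would force $\varphi_N(e) U = 0$, a contradiction. Applying the same argument to $\varphi_{MN}(f^U_N) \in k[G/M]$ for open normals $N \leqslant M \leqslant N_1$ yields $\varphi_{MN}(f^U_N) f^U_M = f^U_M$, which by the definition of $\psi_{MN}$ means $\psi_{MN}(f^U_M) = f^U_N$; the analogous equalities for $V$ follow identically.

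To conclude, I would interpret the direct limit $E = \varinjlim E_N$ concretely: the canonical map $\iota_N : E_N \to E$ sends $c \in E_N$ to the unique $e' \in E$ with $\varphi_N(e') c = c$. The observation above shows $\iota_{N_1}(f^U_{N_1}) = e = \iota_{N_1}(f^V_{N_1})$, so by the standard property of direct limits of sets there exists some $N_0 \leqslant N_1$ with $\psi_{N_1 N_0}(f^U_{N_1}) = \psi_{N_1 N_0}(f^V_{N_1})$ in $E_{N_0}$. Combined with $\psi_{N_1 N_0}(f^U_{N_1}) = f^U_{N_0}$ and its $V$-analogue, this gives $f^U_{N_0} = f^V_{N_0}$, and hence $U, V$ lie in the same block of $k[G/N_0]$.

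The principal obstacle is justifying the explicit description of $\iota_N$ used in the final paragraph, since Remark \ref{obs:directsystemofcentralprimidemp} only states the set-level identification $E = \varinjlim E_N$; one must verify that the identification $\iota_N(c) = e' \iff \varphi_N(e') c = c$ is well-defined and realizes the direct limit, which follows from compatibility with the transition maps $\psi_{MN}$, surjectivity (each $e' \in E$ has $\varphi_N(e') \neq 0$ for some $N$), and injectivity on equivalence classes (via orthogonality of distinct primitives of $k\db{G}$).
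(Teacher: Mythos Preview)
Your argument is essentially the paper's own proof: both fix an open normal subgroup on which $U,V$ are $k[G/N]$-modules, identify the block idempotents $f^U, f^V$ at that level, observe via $\varphi_N(e)f^U = f^U$ that both map to $e$ under the canonical map $E_N \to E$, and then invoke the direct-limit property of $E = \varinjlim E_N$ to equalize them at some deeper $N_0$. Your explicit description of $\iota_N$ and its justification are correct and are exactly what underlies the paper's use of the symbol $\psi_M$.

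There is one small gap. When you write ``let $f^U_N \in E_N$ be the unique centrally primitive idempotent of $k[G/N]$ in whose block $U$ lies'', you are tacitly assuming that $U$, viewed as a $k[G/N]$-module, lies in a \emph{single} block of $G/N$. This is not automatic: $\varphi_N(e)$ may decompose as a sum of several centrally primitive idempotents of $k[G/N]$, and a decomposable $U$ could meet more than one of them nontrivially. The paper handles this in its first sentence by reducing to the case where $U$ and $V$ are indecomposable (which is harmless, since all indecomposable summands of a module lying in $B$ again lie in $B$, and one can intersect the finitely many resulting $N_0$'s). You should insert this reduction at the start; after that your argument goes through unchanged.
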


\begin{proof}
	Since $U,V$ are finite dimensional, it is sufficient to prove the lemma supposing that both are indecomposable.  Let $e$ be the block idempotent such that $eU = U, eV = V$ and let $M$ be an open normal subgroup of $G$ acting trivially on both $U$ and $V$, so that both can be treated as $k[G/M]$-modules.  Note that they are indecomposable as such.
	
	We use the notation from Remark \ref{obs:directsystemofcentralprimidemp}.  If $f,g$ are the block idempotents of $U,V$ respectively in $k[G/M]$ then, since $\varphi_M(e)f = f$ and $\varphi_M(e)g = g$, we have that $\psi_M(f) = \psi_M(g) = e$.  It follows from the construction of the direct limit that there is an open normal subgroup $N_0$ of $G$ contained in $M$ such that $\psi_{MN_0}(f) = \psi_{MN_0}(g)$.  Both $U$ and $V$ are in the block of $k[G/N_0]$ with block idempotent $\psi_{MN_0}(f)$.
\end{proof}

\subsection{Inverse systems of blocks}

Consider $k\db{G}$ as a module for the completed group algebra $k\db{G\times G}$ with the following continuous multiplication:

\begin{eqnarray*}
	k\db{G\times G} \times k\db{G} &\to & k\db{G} \\
	((g_{1},g_{2}),x)&\mapsto &g_{1}xg_{2}^{-1},
\end{eqnarray*}
for $g_{1},g_{2}\in G$ and $x\in k\db{G}$.  The canonical projections $\varphi_{MN}: k[G/N]\to k[G/M], \varphi_N:k\db{G}\to k[G/N]$ are $k\db{G\times G}$-module homomorphisms.

The blocks of $G$ are precisely the indecomposable summands of $k\db{G}$ as a $k\db{G\times G}$-module and they are pairwise non-isomorphic as $k\db{G\times G}$-modules, since their annihilators as $k\db{G\times 1}$-modules are pairwise non-equal.

Let $B$ be a block of $G$ treated as a $k\db{G\times G}$-module, and $N\trianglelefteq_{O}G$. Define $B_{N}$ to be the coinvariant module $B_{N\times N}$, but note that $B_{N\times N}=B_{N\times 1}$, since given $m,n\in N$ and $x\in B_{N\times 1}$ there is $n'\in N$ such that
$$(m,n)x=mxn^{-1}=mn'x=(mn',1)x=x,$$
so that $N\times N$ acts trivially on $B_{N\times 1}$.
By Proposition \ref{prop:invlimcoinv}, $B=\varprojlim_{N}\{B_{N}, \varphi_{MN}\}$ where, as ever, $\varphi_{MN}$ is the canonical quotient map.

Denote by $\delta:G\to G\times G$ the diagonal map $g\mapsto (g,g)$.

\begin{lem}\label{lem:isomorphismofk[[G]]asbimodule}
	The $k\db{G \times G}$-module $k\db{G}$ is isomorphic to the induced module $k\uparrow_{\delta(G)}^{G\times G}$. 
\end{lem}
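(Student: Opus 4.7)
The plan is to combine Lemma \ref{lem:isoind/res} with the classical finite-group identification $kG'\cong k\uparrow^{G'\times G'}_{\delta(G')}$ applied to each finite quotient $G' = G/N$, then pass to the inverse limit.

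First I would apply Lemma \ref{lem:isoind/res} to the closed subgroup $\delta(G)\leq_{C} G\times G$ with the trivial $k\db{\delta(G)}$-module $k$ to get
$$k\uparrow^{G\times G}_{\delta(G)} = k\db{G\times G}\widehat\otimes_{k\db{\delta(G)}}k \cong k\db{(G\times G)/\delta(G)}\widehat\otimes_k k = k\db{(G\times G)/\delta(G)}.$$
So it suffices to produce a natural isomorphism $k\db{(G\times G)/\delta(G)}\cong k\db{G}$ of $k\db{G\times G}$-modules, where $G\times G$ acts on $G$ via $(a,b)\cdot x = axb^{-1}$.

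To this end, I would consider the continuous surjection $\pi\colon G\times G\to G$ defined by $\pi(g_1,g_2) = g_1 g_2^{-1}$. A direct check shows that $\pi$ is $G\times G$-equivariant and that its fibres are precisely the cosets $\{(g_1 h, g_2 h) : h\in G\}$, which are exactly the cosets of $\delta(G)$ arising from the right $k\db{\delta(G)}$-module structure on $k\db{G\times G}$ used in the tensor product. Hence $\pi$ descends to a continuous $G\times G$-equivariant bijection $\bar\pi\colon (G\times G)/\delta(G)\to G$ between profinite spaces, and is automatically a homeomorphism (continuous bijections between compact Hausdorff spaces are homeomorphisms).

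Finally, I would promote $\bar\pi$ to a module isomorphism by compatibility with the inverse systems defining the two permutation modules. For every $N\trianglelefteq_{O} G$, $\bar\pi$ induces a $(G/N)\times(G/N)$-equivariant bijection $(G\times G)/(\delta(G)(N\times N))\to G/N$, and these commute with the canonical quotient maps as $N$ shrinks. Applying the linearisation functor and taking the inverse limit yields the required $k\db{G\times G}$-module isomorphism. The main obstacle, more a bookkeeping hazard than a real difficulty, is keeping straight the orientations: verifying that the right $k\db{\delta(G)}$-module structure on $k\db{G\times G}$ used in the induction really does pair with the map $(g_1, g_2) \mapsto g_1g_2^{-1}$ (rather than $g_1^{-1}g_2$), so that the $G\times G$-action on $G$ matches the stated one and the finite-level isomorphisms are genuinely compatible with the connecting maps.
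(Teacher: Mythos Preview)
Your proposal is correct. The route differs from the paper's, however: the paper simply writes down explicit continuous $k\db{G\times G}$-module homomorphisms
\[
\rho:k\db{G}\to k\uparrow_{\delta(G)}^{G\times G},\quad g\mapsto (g,1)\ctens 1,
\qquad
\gamma:k\uparrow_{\delta(G)}^{G\times G}\to k\db{G},\quad (g,h)\ctens\lambda\mapsto g\lambda h^{-1},
\]
and verifies directly that they are mutually inverse. Your argument instead factors the identification through Lemma~\ref{lem:isoind/res}, reducing the problem to the homeomorphism of coset spaces $(G\times G)/\delta(G)\cong G$ given by $(g_1,g_2)\delta(G)\mapsto g_1g_2^{-1}$, and then promotes this to a module isomorphism by passing through the finite quotients. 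The two approaches produce the same map (your $\bar\pi$ linearised is exactly the paper's $\gamma$), but the paper's is shorter and sidesteps the inverse-limit bookkeeping entirely; your version has the minor virtue of making the permutation-module nature of the isomorphism explicit. The orientation check you flag is indeed routine: the right $\delta(G)$-action on $k\db{G\times G}$ by $(g_1,g_2)\cdot(h,h)=(g_1h,g_2h)$ collapses precisely to the left cosets $(g_1,g_2)\delta(G)$, matching the fibres of $(g_1,g_2)\mapsto g_1g_2^{-1}$.
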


\begin{proof}
	The $k\db{G\times G}$-module homomorphisms $\rho:k\db{G}\to k\uparrow_{\delta(G)}^{G\times G}$ defined by $g\mapsto (g,1)\widehat{\otimes}1$, and $\gamma:k\uparrow_{\delta(G)}^{G\times G}\to k\db{G}$ defined by $(g,h)\widehat{\otimes}\lambda\mapsto g\lambda h^{-1}$, are continuous.  The checks that they are well-defined and mutually inverse are as in \cite[Proposition 5.11.6]{Lin1}.
\end{proof}

\begin{prop}\label{prop:invlimblocks}
	Let $B$ be a block of $G$.  Then $B$ is the inverse limit of a surjective inverse system of blocks of $G/N$ and algebra homomorphisms, for some cofinal set of open normal subgroups $N$ of $G$.
\end{prop}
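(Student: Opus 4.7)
My plan is to realize $B$ as an inverse limit of its coinvariant modules $B_N=B_{N\times N}$, identify each $B_N$ as a product of blocks of $k[G/N]$, and then use an idempotent-lifting argument to show that this product consists of a single factor for a cofinal family of $N$.

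Write $e$ for the centrally primitive idempotent of $k\db{G}$ with $B=ek\db{G}$. By Proposition \ref{prop:invlimcoinv} we already have $B=\varprojlim_N B_N$. Because $N$ is normal in $G$, the augmentation ideal $I_N$ is two-sided in $k\db{G}$, and the identity $(n_1,n_2)b-b=(n_1-1)(bn_2^{-1})+b(n_2^{-1}-1)$ gives $I_{N\times N}B=I_NB$, which coincides with $\ker(\varphi_N)\cap B$. The restriction $\varphi_N|_{B}\colon B\to k[G/N]$ therefore factors as an algebra isomorphism $B_N\cong \varphi_N(e)k[G/N]$, the inverse system maps become the restrictions of the (surjective) algebra homomorphisms $\varphi_{MN}$, and each $B_N$ is the product of the blocks $ck[G/N]$ of $G/N$ for those $c\in E_N$ appearing as summands of the central idempotent $\varphi_N(e)$.

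The heart of the argument is to show that $\varphi_N(e)\in E_N$ itself for every $N$ contained in some suitable open normal subgroup $N_0$. Suppose for contradiction that $\varphi_N(e)=c_1+c_2$ for some nonzero orthogonal central idempotents $c_1,c_2\in k[G/N]$. The kernel $I_NB$ of the surjection $\varphi_N|_{B}\colon B\onto \varphi_N(e)k[G/N]$ is an open ideal of the pseudocompact algebra $B$, and by a standard idempotent lifting argument in pseudocompact algebras there is a central idempotent $\tilde c_1\in B$ with $\varphi_N(\tilde c_1)=c_1$. Setting $\tilde c_2:=e-\tilde c_1\in B$ produces orthogonal central idempotents of $B$ summing to $e$, both nonzero because $c_1$ and $c_2$ are. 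Crucially, any element of $B=ek\db{G}$ which is central in $B$ is automatically central in $k\db{G}$ (a short check using the centrality of $e$), so $e=\tilde c_1+\tilde c_2$ is a nontrivial decomposition of $e$ into orthogonal central idempotents of $k\db{G}$, contradicting the primitivity of $e$. Hence $\varphi_N(e)\in E_N\cup\{0\}$. Choosing $N_0$ with $\varphi_{N_0}(e)\neq 0$, which is possible since $e\neq 0$ in $\varprojlim k[G/N]$, the relation $\varphi_{N_0 N}\varphi_N(e)=\varphi_{N_0}(e)\neq 0$ forces $\varphi_N(e)\neq 0$, hence $\varphi_N(e)\in E_N$, for every $N\leqslant N_0$. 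Then $B_N$ is a block of $k[G/N]$ for each such $N$, and since $\{N:N\leqslant N_0\}$ is cofinal this exhibits $B$ as the inverse limit of a surjective inverse system of blocks of the quotients $G/N$ with algebra homomorphisms as structure maps.

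The main obstacle is the central idempotent lifting step, together with the observation that a central lift inside the pseudocompact summand $B$ is automatically central in the ambient algebra $k\db{G}$; without this latter ingredient the primitivity of $e$ in the full algebra cannot be invoked to derive a contradiction.
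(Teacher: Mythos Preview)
Your proof has a genuine gap at the ``standard idempotent lifting'' step. Idempotents do lift along surjections of pseudocompact algebras, but \emph{central} idempotents need not lift to \emph{central} idempotents: the natural map $Z(B)\to Z(B_N)$ is in general not surjective. Already for the finite group $G=S_3$ in characteristic $3$ with $N=A_3$, the algebra $kS_3$ has a single block, yet $Z(kS_3)\to Z(k[\mathbb Z/2])\cong k\times k$ has image only $k\cdot 1$; the nontrivial central idempotent of $k[\mathbb Z/2]$ does not lift centrally to $kS_3$. In your situation the obstruction is even sharper: since $B$ is by hypothesis indecomposable, $Z(B)$ is local and its only idempotents are $0$ and $e$, neither of which can map to a nontrivial summand $c_1$ of $\varphi_N(e)$. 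Thus the existence of a central lift $\tilde c_1$ is precisely equivalent to the primitivity of $\varphi_N(e)$ that you are trying to prove, and the argument is circular. In particular, the assertion that $B_N$ is itself a block of $G/N$ for all sufficiently small $N$ is not established by your argument.

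The paper avoids this difficulty entirely. Instead of claiming that $B_N$ is a block, it uses the direct-limit description of block idempotents (Remark~\ref{obs:directsystemofcentralprimidemp}) to select, for each $N$ in a cofinal system, a single primitive central idempotent $e_N\in E_N$ occurring as a summand of $\varphi_N(e)$, and sets $X_N:=e_Nk[G/N]$, a block of $G/N$ that is merely a direct summand of $B_N$. The transition maps are then modified to $\gamma_{MN}(x)=e_M\varphi_{MN}(x)$, which are surjective unital algebra homomorphisms carrying $e_N$ to $e_M$. Finally, the splitting Lemma~\ref{lem:splitmorphisms} is used to show that $\varprojlim X_N$ is a nonzero $k\db{G\times G}$-summand of the indecomposable module $B$, hence equals $B$.
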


\begin{proof}
	Let $E$ be the set of block idempotents of $k\db{G}$. We use the direct system $\{E_{N},\psi_{MN}\}$ described in Remark \ref{obs:directsystemofcentralprimidemp}. If $e\in E$ is the block idempotent of $B$, there is $N_{0}\trianglelefteq_{O}G$ and $e_{0}\in E_{N_{0}}$ such that $e=\psi_{N_{0}}(e_{0})$. Consider the cofinal system $\mathcal{N}$ of open normal subgroups $N$ of $G$ with $N\leq N_{0}$.

	For each $N\leq N_{0}$, let $e_{N}=\psi_{N_{0}N}(e_{0})$, the unique centrally primitive idempotent of $k[G/N]$ such that $\varphi_{N_{0}N}(e_{N})e_{0} = e_0$.  Observe that if ever $N\leqslant M\leqslant N_0$, then $\varphi_{MN}(e_N)e_M = e_M$. 
	The block $X_{N}$ of $G/N$ with block idempotent $e_{N}$ is a direct summand of $B_{N}$ as a $k\db{G\times G}$-module, so there are $k\db{G\times G}$-module homomorphisms $\iota_{N}:X_{N}\to B_{N}$ inclusion, and $\pi_{N}:B_{N}\to X_{N}$ multiplication by $e_{N}$, such that $\pi_{N}\iota_{N}=\tn{id}_{X_{N}}$. Now we can form a new inverse system of blocks $X_{N}$ with maps $\gamma_{MN}: = \pi_{M}\varphi_{MN}\iota_{N}$.  Note that $\gamma_{MN}(e_N) = \varphi_{MN}(e_N)e_M = e_M$, which implies in particular that the $\gamma_{MN}$ are surjective algebra homomorphisms. 
	Then $\{X_N,\gamma_{MN}\}$ forms an inverse system, because for all $N\leqslant M\leqslant L$ and all $x = xe_N\in X_N$ we have
	$$\gamma_{LM}\gamma_{MN}(x) = \pi_L\varphi_{LM}(\varphi_{MN}(x)e_M) = \varphi_{LN}(x)\varphi_{LM}(e_M)e_L = \gamma_{LN}(x).$$
	Denote by $X$ the inverse limit of this inverse system.  
	
	The split homomorphisms $\pi_{N}:B_N\to X_N$ are the components of a map of inverse systems, so by Lemma \ref{lem:splitmorphisms}, the induced continuous map $\pi=\varprojlim_{N}\pi_{N}$ is a split $k\db{G\times G}$-module homomorphism. Hence $X$ is a direct summand of $B$. But $B$ is indecomposable as a $k\db{G\times G}$-module and $X$ is not $0$, so
	$$B=X=\varprojlim_{N}X_{N}.$$
\end{proof}

\begin{rem}\label{obs:Basinvlimoffiniteblocks}
	Throughout this article, given a block $B$ of $G$ with block idempotent $e$, the notation $\{X_N,\gamma_{MN},N\in \mathcal{N}\}$ will refer to a fixed inverse system of blocks as constructed in Proposition \ref{prop:invlimblocks}.  This inverse system is not unique, due to the choice of the open normal subgroup $N_0$ and of the block idempotent $e_0$ of $G/N_0$, but it will be important that once these choices are made, the maps $\gamma_{MN}$ are canonical.
\end{rem}

\begin{cor}\label{cor:simplemodlies}
	Let $B$ be a block of $G$ with block idempotent $e$. Let $S$ be a finite dimensional $k\db{G}$-module lying in $B$. Then there is an open normal subgroup $N'$ of $G$ acting trivially on $S$ such that $S$ lies in the block $X_{N'}$ of $G/N'$. Furthermore, $S$ lies in $X_{N}$ for every $N \leqslant N'$. 
\end{cor}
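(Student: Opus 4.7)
The plan is to reduce to the case where $S$ is indecomposable and then appeal to the direct limit description $E = \dirlim E_N$ from Remark \ref{obs:directsystemofcentralprimidemp}. The reduction is routine: write $S = U_1 \oplus \cdots \oplus U_n$ as a finite direct sum of indecomposable $k\db{G}$-modules, each of which lies in $B$. If each $U_i$ satisfies the conclusion with open normal subgroup $N'_i$, then $N' := \bigcap_{i=1}^n N'_i$ is open normal, acts trivially on $S$, and $S$ lies in $X_N$ for every $N \leqslant N'$.

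Suppose then that $S$ is indecomposable. Choose an open normal subgroup $M \leqslant N_0$ acting trivially on $S$. As an indecomposable $k[G/M]$-module, $S$ lies in a unique block of $k[G/M]$; let $f \in E_M$ be its block idempotent. Since $e$ acts on $S$ through $\varphi_M(e)$ and $eS = S$, we have $\varphi_M(e)S = S$; because $S$ is entirely contained in the block with idempotent $f$, this forces $f$ to appear in the decomposition of the central idempotent $\varphi_M(e)$ into its central primitive constituents in $E_M$. The same decomposition also contains $e_M$, by the construction of the inverse system $\{X_N\}$ in Proposition \ref{prop:invlimblocks}. Thus both $f$ and $e_M$ represent the element $e$ under the canonical map $E_M \to E = \dirlim E_N$, and the direct limit property yields an open normal subgroup $N' \leqslant M$ with $\psi_{MN'}(f) = \psi_{MN'}(e_M) = e_{N'}$ in $E_{N'}$ (the second equality uses the transitivity of $\psi$). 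Repeating the support-of-$\varphi$ argument at level $N'$ in place of $M$ shows that the block idempotent of $S$ in $k[G/N']$ is exactly $\psi_{MN'}(f) = e_{N'}$, so $S$ lies in $X_{N'}$.

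For the final assertion, let $N \leqslant N'$ be open normal. Transitivity gives $\psi_{N'N}(e_{N'}) = e_N$, equivalently $\varphi_{N'N}(e_N)\,e_{N'} = e_{N'}$, so $e_{N'}$ appears in the decomposition of $\varphi_{N'N}(e_N) \in Z(k[G/N'])$. The other central primitive idempotents in this decomposition are orthogonal to $e_{N'}$ and therefore annihilate $S$ (which lies in $X_{N'}$), so $\varphi_{N'N}(e_N)\,S = e_{N'} S = S$. Since $e_N$ acts on $S$ via $\varphi_{N'N}(e_N)$, we conclude $e_N S = S$.

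The main subtlety is the gap between $\varphi_M(e)$ and $e_M$: in general $\varphi_M(e)$ is a sum of several central primitive idempotents of $k[G/M]$, with $e_M$ being only one summand. Consequently $S$ need not lie in $X_M$ even when $M \leqslant N_0$ acts trivially on $S$, and one must use finite-dimensionality of $S$ together with the direct limit to force the block idempotent of $S$ to coincide with $e_{N'}$ at some sufficiently small $N' \leqslant M$.
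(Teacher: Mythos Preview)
Your proof is correct and follows essentially the same approach as the paper, exploiting the direct limit $E = \dirlim E_N$ to merge the block idempotent $f$ of $S$ at level $M$ with $e_M$ at some deeper level $N'$. The only presentational difference is that the paper introduces an auxiliary simple module $T$ lying in $X_M$ (hence with block idempotent $e_M$) and then invokes Lemma~\ref{lem:simplemoduleslyinginthesameblockofG/N} to find $N'$ where $S$ and $T$ share a block, whereas you inline that lemma's direct-limit argument and compare $f$ to $e_M$ without the auxiliary module.
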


\begin{proof}
	Write $B=\varprojlim_{N}\{X_{N},\gamma_{MN}, N\in \mathcal{N}\}$ 
	as above and fix $M\in \mathcal{N}$. There is a simple $k[G/M]$-module $T$ lying in $X_{M}$. Then $T$ lies in $B$. 
	If $N\leqslant M$ then $e_NT=T$, since otherwise $e_NT = 0$, which would imply that $e_MT = e_M\varphi_{MN}(e_N)T = e_M\cdot e_NT = 0$. So $T$, treated as a $k[G/N]$-module, lies in $X_{N}$.
	
	Since $S,T$ are finite dimensional $k\db{G}$-modules lying in the same block $B$, by Lemma \ref{lem:simplemoduleslyinginthesameblockofG/N} there is $N'\trianglelefteq_{O}G$ with $N'\leq M$, such that $S,T$ lie in the same $k[G/N']$-block. But $T$ lies in $X_{N'}$, hence so does $S$.
\end{proof}

\begin{prop}\label{prop:relprojforblocks}
	Let $G$ be a profinite group, $R$ a closed subgroup of $G$ and $B$ a block of $G$ treated as a $k\db{G\times G}$-module. Then $B$ is $\delta(R)$-projective if, and only if, $X_{N}$ is $\delta(R)(N\times N)$-projective for each $N\trianglelefteq_{O}G$. 
\end{prop}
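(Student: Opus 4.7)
The plan is to prove the two implications separately, making use of the inverse-limit description $B=\varprojlim_{M\in\mathcal{N}}X_M$ furnished by Proposition \ref{prop:invlimblocks}.

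For the forward direction, I would apply Theorem \ref{teo:projrel}(7) to the finitely generated $k\db{G\times G}$-module $B$ with the closed subgroup $\delta(R)\leqslant G\times G$. Since the open normal subgroups of $G\times G$ of the form $N\times N$ (for $N\trianglelefteq_O G$) are cofinal, this yields that $B_{N\times N}$ is $\delta(R)(N\times N)$-projective as a $k[(G/N)\times(G/N)]$-module for every such $N$. The maps $\iota_N, \pi_N$ from the proof of Proposition \ref{prop:invlimblocks} realize $X_N$ as a direct $k[(G/N)\times(G/N)]$-summand of $B_{N\times N}$, and since relative projectivity passes to direct summands, $X_N$ is $\delta(R)(N\times N)$-projective.

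For the converse, by Proposition \ref{prop:relprojHN} it suffices to show that $B$ is $\delta(R)(N\times N)$-projective for each $N\in\mathcal{N}$. By Theorem \ref{teo:projrel}(4) this reduces to producing a continuous $k\db{G\times G}$-linear section of the natural projection $\pi:B\res{\delta(R)(N\times N)}\ind{G\times G}\to B$. The idea is to build this section as an inverse limit of level-wise sections. Because $\delta(R)(N\times N)$ is open in $G\times G$, induction from it is a finite-rank tensor product and commutes with inverse limits of pseudocompact modules, yielding $B\res{\delta(R)(N\times N)}\ind{G\times G}\cong\varprojlim_M X_M\res{\delta(R)(N\times N)}\ind{G\times G}$, with $\pi$ identified as the inverse limit of the analogous projections $\pi_M:X_M\res{\delta(R)(N\times N)}\ind{G\times G}\to X_M$. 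For each $M\leqslant N$ in $\mathcal{N}$, the containment $\delta(R)(M\times M)\leqslant\delta(R)(N\times N)$ combined with the hypothesis on $X_M$ forces $\pi_M$ to split.

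The main obstacle is assembling the level-wise splittings into a single section of $\pi$. For each $M$, the set $S_M$ of sections of $\pi_M$ is a non-empty closed linear variety in the pseudocompact $k$-vector space $\tn{Hom}_{k\db{G\times G}}(X_M, X_M\res{\delta(R)(N\times N)}\ind{G\times G})$. The subset $T\subseteq\prod_M S_M$ of tuples compatible under the transition maps $\gamma_{MM'}\res{\delta(R)(N\times N)}\ind{G\times G}$ is itself a closed linear subset of a linearly compact space (Lemma \ref{lem:propertiesofLCspaces}(2)); one verifies non-emptiness of $T$ either by a downward induction on finite sub-indexing sets or by directly invoking Lemma \ref{lem:propertiesofLCspaces}(3). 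Any compatible thread then assembles via the universal property of the inverse limit into the desired section of $\pi$, completing the proof.
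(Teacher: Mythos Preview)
Your forward direction matches the paper's exactly.  For the converse the paper takes a different, more direct route: rather than fixing an open subgroup $\delta(R)(N\times N)$ and showing $B$ is projective relative to it for each $N$, the paper goes straight for $\delta(R)$-projectivity.  Using Lemma \ref{lem:isoind/res} it realizes $B\res{\delta(R)}\ind{G\times G}$ as the inverse limit of the modules $B_N\res{\delta(R)(N\times N)}\ind{G\times G}$ (so the subgroup varies with $N$ as well as the module), defines explicit split surjections $q_N$ from these onto $X_N$, checks that the $q_N$ form a map of inverse systems, and then invokes Lemma \ref{lem:splitmorphisms} to conclude that the induced map $q:B\res{\delta(R)}\ind{G\times G}\to B$ splits.

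Your route via Proposition \ref{prop:relprojHN} is perfectly valid, and keeping the inducing subgroup open makes the identification of $B\res{\delta(R)(N\times N)}\ind{G\times G}$ with $\varprojlim_M X_M\res{\delta(R)(N\times N)}\ind{G\times G}$ clean.  The one point that deserves more care is the assembly step: to invoke Lemma \ref{lem:propertiesofLCspaces}(3) you must explain the transition maps on the spaces $\Hom_{k\db{G\times G}}(X_M, X_M\res{\delta(R)(N\times N)}\ind{G\times G})$.  These are not automatic, because $(X_M)_{M'\times M'}$ is in general strictly larger than $X_{M'}$ (it is $\varphi_{M'M}(e_M)B_{M'}$, of which $X_{M'}=e_{M'}B_{M'}$ is only a summand).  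The fix is easy---pass to $(M'\times M')$-coinvariants and then cut down by the central idempotent $e_{M'}$---but it should be said explicitly.  The paper sidesteps this by invoking Lemma \ref{lem:splitmorphisms}, whose proof already packages the required linear-compactness argument.
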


\begin{proof}
	The forward implication is easy: if $B$ is relatively $\delta(R)$-projective then $B_N = B_{N\times N}$ is relatively $\delta(R)(N\times N)$-projective.  But $X_N$ is a direct summand of $B_N$, so we are done.
	
	For the reverse implication, suppose that $X_{N}$ is $\delta(R)(N\times N)$-projective for each $N\trianglelefteq_{O}G$.
	
	We utilize the natural isomorphism of Lemma \ref{lem:isoind/res} to write
	$$B_N\res{\delta(R)(N\times N)}\ind{G\times G} = k[G\times G / \delta(R)(N\times N)]\ctens_k B_N.$$
	Letting $N$ vary and setting $\Psi_{MN}$ to be the tensor product of the natural projections, we obtain an inverse system of $k\db{G\times G}$-modules $\{B_N\res{\delta(R)(N\times N)}\ind{G\times G},\,\Psi_{MN}\}$, whose inverse limit is the module {$B\res{\delta(R)}\ind{G\times G}$} by \cite[Proposition 5.2.2]{ZR}.  Denoting by $\pi_N : B_N \to X_N$ the canonical projection, we define the continuous $k\db{G\times G}$-module homomorphism
	\begin{align*}
		q_N : k[G\times G / \delta(R)(N\times N)]\ctens_k B_N & \to X_N \\
		z\ctens b & \mapsto \pi_N(b).
	\end{align*}
	Note that $q_N$ splits as a $k\db{G\times G}$-module homomorphism, being the composition of split homomorphisms $B_N\res{\delta(R)(N\times N)}\ind{G\times G}\to X_N\res{\delta(R)(N\times N)}\ind{G\times G}$ and $X_N\res{\delta(R)(N\times N)}\ind{G\times G}\to X_N$ (the latter split since $X_N$ is relatively $\delta(R)(N\times N)$-projective).  Furthermore the $q_N$ yield a map of inverse systems, as may be easily checked.  The induced homomorphism $q : B\res{\delta(R)}\ind{G\times G}\to \invlim X_N = B$ is thus split by Lemma \ref{lem:splitmorphisms}, so that $B$ is relatively $\delta(R)$-projective by Theorem \ref{teo:projrel}.
\end{proof}

\section{Defect Groups}\label{section:defectgroups}

\subsection{Definition and basic properties}

Recall that we treat $k\db{G}$ as a $G$-algebra with action from $G$ given by conjugation.  Whenever we treat a subalgebra or a quotient algebra of $k\db{G}$ as a $G$-algebra, the action from $G$ is induced from this one.  Note that the center $\tn{Z}(k\db{G})$ of $k\db{G}$ is $k\db{G}^{G}$, so block idempotents of $k\db{G}$ belong to $k\db{G}^{G}$.  Recall the definition \ref{def:traceset} of $\tn{oTr}_D^G(k\db{G}^D)$.

\begin{defi}\label{def:defgrp1}
	Let $B$ be a block of a profinite group $G$ with block idempotent $e$. A \emph{defect group} of $B$ is a closed subgroup $D$ of $G$ such that $e\in \tn{oTr}_{D}^{G}(k\db{G}^{D})$ and minimal with this property.
\end{defi}

\begin{teo}\label{teo:defgrpG-algfin}
	Let $B$ be a block of $G$.  A defect group of $B$ exists.
\end{teo}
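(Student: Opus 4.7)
The plan is to apply Zorn's lemma to the set
$$\mathcal{D} := \{D \leq_C G\,|\, e\in \tn{oTr}_D^G(k\db{G}^D)\}$$
of closed subgroups of $G$ satisfying the condition in Definition \ref{def:defgrp1}, ordered by inclusion, and produce a minimal element. Note that $\mathcal{D}$ is non-empty since $e\in k\db{G}^G = \tn{Tr}_G^G(k\db{G}^G)$, so $G \in \mathcal{D}$. The whole content of the theorem is verifying the Zorn hypothesis: given a chain $\{D_i\}_{i\in I}$ in $\mathcal{D}$ (ordered by inclusion), the closed subgroup $D := \bigcap_i D_i$ lies in $\mathcal{D}$.

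To see this, I fix an arbitrary open normal subgroup $N$ of $G$ and aim to show $e\in \tn{Tr}_{DN}^G(k\db{G}^{DN})$. The key geometric fact is that, for a descending chain of closed subgroups in a profinite group, one has $DN = D_iN$ for all sufficiently small $D_i$ in the chain. The reason is that $DN$ is open (being a union of cosets of the open subgroup $N$) and contains $D = \bigcap_i D_i$; by compactness of $G$ and the finite intersection property applied to the closed sets $D_i \smallsetminus DN$, one of these must be empty, so some $D_{i_0} \subseteq DN$ and hence $D_{i_0}N \subseteq DN$. The reverse inclusion is automatic, and the chain condition then forces $D_iN = DN$ for every $i$ with $D_i \subseteq D_{i_0}$. (Equivalently: $\{D_iN/N\}$ is a descending chain in the finite group $G/N$, hence eventually constant, and its eventual value must equal $DN/N$ by the compactness argument above.)

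With this established, fix such an $i_0$. Since $D_{i_0} \in \mathcal{D}$, we have $e \in \tn{Tr}_{D_{i_0}N}^G(k\db{G}^{D_{i_0}N}) = \tn{Tr}_{DN}^G(k\db{G}^{DN})$. Because $N$ was arbitrary, this shows $e \in \bigcap_{N\trianglelefteq_O G}\tn{Tr}_{DN}^G(k\db{G}^{DN}) = \tn{oTr}_D^G(k\db{G}^D)$, so $D \in \mathcal{D}$ and is clearly a lower bound for the chain. Zorn's lemma then produces a minimal element of $\mathcal{D}$, which is by definition a defect group of $B$.

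The main obstacle is the step $DN = D_iN$ for $i$ sufficiently deep in the chain. Everything else is bookkeeping around Zorn and unwinding Definition \ref{def:traceset}. This step is a standard compactness trick in the theory of profinite groups, but it is the only point where the topology of $G$ interacts nontrivially with the algebraic condition on $e$, and failing to verify it would leave us unable to promote the pointwise information ``each $D_i$ is in $\mathcal{D}$'' to information about $D$.
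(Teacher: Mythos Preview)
Your proof is correct and follows essentially the same approach as the paper: set up $\mathcal{D}$, apply Zorn's lemma, and verify that the intersection of a chain lies in $\mathcal{D}$ by using the fact that $DN = D_iN$ for some $D_i$ in the chain. The only difference is that you spell out the compactness argument for $DN = D_iN$ in more detail than the paper, which simply asserts it from the openness of $DN$.
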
   

\begin{proof}
	Let $e$ be the block idempotent of $B$ and consider $\mathcal{D}=\{H\leq_{C}G\,:\,e\in \tn{oTr}_{H}^{G}(k\db{G}^{H})\}$. Then $\mathcal{D}$ is partially ordered by inclusion and non-empty, since $G\in \mathcal{D}$.  If $\mathcal{C}$ is a chain in $\mathcal{D}$, we assert that $L=\bigcap\limits_{H\in\mathcal{C}}H\in\mathcal{D}$.  Fix $N\trianglelefteq_{O}G$.  As $LN$ is open in $G$, $LN=HN$ for some $H\in \mathcal{C}$ and hence $e\in \tn{Tr}_{LN}^{G}(k\db{G}^{LN})$. Since $N$ was arbitrary, $e\in \tn{oTr}_L^G(k\db{G}^L)$ by definition and hence $L\in \mathcal{D}$ as claimed.  The existence of a defect group for $B$ now follows from Zorn's Lemma.
\end{proof}

In several proofs that follow we use $\tn{Tr}$ to mean two different things (the $\tn{Tr}$ of Definitions \ref{def:tracmap} and \ref{def:tracemap}), but see Remark \ref{rem:twomeaningsofTr}.  

\begin{prop}\label{prop:modrelprojtodefgrp}
	If $B$ is a block of $G$ with defect group $D$, then any finitely generated $k\db{G}$-module lying in $B$ is relatively $D$-projective.
\end{prop}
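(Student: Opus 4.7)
The plan is to use characterization 8 of Theorem \ref{teo:projrel}: that $U$ is relatively $D$-projective if and only if for every $N\trianglelefteq_{O}G$ there is a continuous $k\db{DN}$-endomorphism $\alpha_N$ of $U$ with $\tn{id}_U = \tn{Tr}_{DN}^G(\alpha_N)$.

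First I would unpack what it means for $U$ to lie in $B$: writing $e$ for the block idempotent of $B$, this says $eu=u$ for every $u\in U$, and that the other centrally primitive idempotents act as zero. Next I would invoke the definition of defect group: since $D$ is a defect group of $B$, $e\in \tn{oTr}_{D}^{G}(k\db{G}^{D})=\bigcap_{N\trianglelefteq_{O}G}\tn{Tr}_{DN}^{G}(k\db{G}^{DN})$, so for each $N\trianglelefteq_{O}G$ we may choose $a_N\in k\db{G}^{DN}$ with $\tn{Tr}_{DN}^{G}(a_N)=e$, where here $\tn{Tr}_{DN}^{G}$ is the $G$-algebra trace of Definition \ref{def:tracemap}.

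Then I would define $\alpha_N:U\to U$ to be left multiplication by $a_N$. The map $\alpha_N$ is continuous, and is a $k\db{DN}$-endomorphism precisely because $a_N\in k\db{G}^{DN}$ commutes with the action of $DN$ under the conjugation $G$-action on $k\db{G}$. It then remains to check, for each $u\in U$, that
\[
\tn{Tr}_{DN}^{G}(\alpha_N)(u) \;=\; \sum_{g\in G/DN} g\,\alpha_N(g^{-1}u) \;=\; \sum_{g\in G/DN}(ga_Ng^{-1})u \;=\; \tn{Tr}_{DN}^{G}(a_N)\,u \;=\; eu \;=\; u,
\]
so $\tn{Tr}_{DN}^{G}(\alpha_N)=\tn{id}_U$. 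This is essentially the consistency noted in Remark \ref{rem:twomeaningsofTr} between the two meanings of $\tn{Tr}_{DN}^{G}$, applied one coset at a time.

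Having produced $\alpha_N$ for every $N\trianglelefteq_{O}G$, condition 8 of Theorem \ref{teo:projrel} yields that $U$ is relatively $D$-projective. There is no real obstacle here; the only subtlety is keeping straight the distinction between the trace on the $G$-algebra $k\db{G}$ and the trace on $\tn{End}_{k\db{DN}}(U\res{DN})$ appearing in Theorem \ref{teo:projrel}, which is resolved by observing that left multiplication intertwines them.
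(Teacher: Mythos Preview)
Your proof is correct and follows essentially the same approach as the paper: choose $a_N\in k\db{G}^{DN}$ with $\tn{Tr}_{DN}^{G}(a_N)=e$, let $\alpha_N$ be left multiplication by $a_N$, and verify $\tn{Tr}_{DN}^{G}(\alpha_N)=\tn{id}_U$ to invoke condition~8 of Theorem~\ref{teo:projrel}. You have simply spelled out in more detail the verification that the paper leaves as ``one easily checks.''
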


\begin{proof}
	Let $U$ be a finitely generated $k\db{G}$-module lying in $B$.
	For each $N\trianglelefteq_{O}G$ we have $e\in \tn{Tr}_{DN}^{G}(k\db{G}^{DN})$ by hypothesis, so there is $a_{N}\in k\db{G}^{DN}$ such that  $e= \tn{Tr}_{DN}^{G}(a_{N})$. Denote by $\alpha_{N}:U\to U$ the continuous $k\db{DN}$-module endomorphism defined by $u\mapsto a_{N}u$.  
	One easily checks that $\tn{id}_{U}=\tn{Tr}_{DN}^{G}(\alpha_{N})$, which implies by Theorem \ref{teo:projrel} that $U$ is relatively $D$-projective. 
\end{proof}

\begin{prop}\label{prop:defgrpchar}
	Let $B$ be a block of $G$.  Then $B$ has a vertex of the form $\delta (D)$ as a $k\db{G\times G}$-module, where $D$ is a pro-$p$ subgroup of $G$.
\end{prop}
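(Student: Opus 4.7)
The plan is to apply the vertex theory developed in Section 2.5 to $B$ viewed as a $k\db{G\times G}$-module, using the identification of $k\db{G}$ with an induced module from $\delta(G)$ in Lemma \ref{lem:isomorphismofk[[G]]asbimodule} to force the vertex inside $\delta(G)$.

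First I would observe that $B$ is a cyclic (hence finitely generated) $k\db{G\times G}$-module: the block idempotent $e$ is central in $k\db{G}$, so the $k\db{G\times G}$-submodule it generates is $\overline{\{geh^{-1}:g,h\in G\}} = \overline{e\cdot k\db{G}} = B$. Being an indecomposable summand of $k\db{G}$ as a $k\db{G\times G}$-module, $B$ is finitely generated and indecomposable in the relevant category, so by Proposition \ref{lem:vtxpro-psubgrp} it has a vertex $Q\leqslant G\times G$, which is a pro-$p$ subgroup of $G\times G$ (and its $G\times G$-conjugates are the other vertices of $B$).

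Next I would note that, by Lemma \ref{lem:isomorphismofk[[G]]asbimodule}, $k\db{G}\cong k\ind{G\times G}$ (induced from $\delta(G)$), so every summand of $k\db{G}$, in particular $B$, is relatively $\delta(G)$-projective as a $k\db{G\times G}$-module. Hence by Proposition \ref{teo:verticesareconjugate}, any vertex $Q$ of $B$ satisfies ${}^{(g_1,g_2)}Q\leqslant \delta(G)$ for some $(g_1,g_2)\in G\times G$. Replacing $Q$ by this conjugate (which is still a vertex of $B$), we may assume $Q\leqslant \delta(G)$. Since $\delta:G\to \delta(G)$ is an isomorphism of profinite groups, $Q=\delta(D)$ for a uniquely determined closed subgroup $D$ of $G$, and $D$ is pro-$p$ because $Q$ is.

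No serious obstacle is anticipated: the two ingredients (finite generation of $B$ as a $k\db{G\times G}$-module, and $\delta(G)$-projectivity from the induced-module description) are both immediate from earlier results, and the conjugacy of vertices does the rest. The only point worth being careful about is confirming that the vertex results of Section 2.5, which are stated for $k\db{G}$-modules, apply equally to $k\db{G\times G}$-modules; this is legitimate since $G\times G$ is itself a profinite group and the hypotheses of Proposition \ref{lem:vtxpro-psubgrp} (finite generation, indecomposability, pseudocompactness) have all been verified for $B$.
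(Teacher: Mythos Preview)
Your proposal is correct and follows essentially the same route as the paper: the paper's own proof simply invokes Lemma~\ref{lem:isomorphismofk[[G]]asbimodule} to obtain relative $\delta(G)$-projectivity of $B$ and then appeals to Proposition~\ref{lem:vtxpro-psubgrp} to conclude. Your version is more explicit in verifying the finite-generation hypothesis needed for the vertex machinery (the paper leaves this implicit), and in spelling out the conjugacy step that places the vertex inside $\delta(G)$; one small notational quibble is that $\overline{\{geh^{-1}:g,h\in G\}}$ should really be the closure of the $k$-linear span, but the conclusion $B=k\db{G\times G}\cdot e$ is correct by Lemma~\ref{JK-lem}.
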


\begin{proof}
	By Lemma \ref{lem:isomorphismofk[[G]]asbimodule}, $B$ is relatively $\delta(G)$-projective.  The result now follows since vertices are pro-$p$ subgroups of $G$ by Lemma \ref{lem:vtxpro-psubgrp}.
\end{proof}

\begin{prop}\label{prop:equivrelproj}
	Let $H$ be a closed subgroup of $G$ and $B$ a block of $G$ with block idempotent $e$. Then $e\in \tn{oTr}_{H}^{G}(k\db{G}^{H})$ if, and only if, $B$ is $\delta(H)$-projective as a $k\db{G\times G}$-module.
\end{prop}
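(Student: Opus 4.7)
The plan is to reduce to the classical finite-group statement -- for a finite group $G'$, a block $B'$ with idempotent $e'$, and a subgroup $H' \leqslant G'$, one has $e' \in \tn{Tr}_{H'}^{G'}((kG')^{H'})$ if and only if $B'$ is $\delta(H')$-projective as a $k[G'\times G']$-module -- via Proposition \ref{prop:relprojforblocks}, which has already cut $\delta(H)$-projectivity of $B$ into its finite pieces. The subtle point throughout is that the definition of $\tn{oTr}_H^G$ forces us to work at each level $N \trianglelefteq_O G$, while $\varphi_M(e) = \sum_{f \in \psi_M^{-1}(e)} f$ is a proper sum in general: the distinguished block idempotent $e_M$ of the cofinal system in Remark \ref{obs:Basinvlimoffiniteblocks} is only one summand of $\varphi_M(e)$.

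The forward direction is direct. Assuming $e \in \tn{oTr}_H^G(k\db{G}^H)$, write $e = \tn{Tr}_{HN}^G(a_N)$ with $a_N \in k\db{G}^{HN}$ and apply $\varphi_N$ to obtain $\varphi_N(e) = \tn{Tr}_{HN/N}^{G/N}(\varphi_N(a_N))$ with $\varphi_N(a_N) \in k[G/N]^{HN/N}$. Since $e_N$ is central in $k[G/N]$ and satisfies $e_N\varphi_N(e) = e_N$ (from the construction of the cofinal system in Proposition \ref{prop:invlimblocks}), multiplying yields $e_N = \tn{Tr}_{HN/N}^{G/N}(e_N\varphi_N(a_N)) \in \tn{Tr}_{HN/N}^{G/N}(k[G/N]^{HN/N})$. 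By the finite case $X_N$ is $\delta(HN/N)$-projective for each $N \in \mathcal{N}$, and Proposition \ref{prop:relprojforblocks} concludes.

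For the reverse direction, fix $N$ and aim to construct $a \in k\db{G}^{HN}$ with $\tn{Tr}_{HN}^G(a) = e$ by an inverse-limit assembly. First observe that the coinvariant $B_{M \times M} \cong \varphi_M(e) k[G/M] = \bigoplus_{f \in \psi_M^{-1}(e)} X_M^f$ is $\delta(HM/M)$-projective as a $k[G/M \times G/M]$-module for every $M \trianglelefteq_O G$. Indeed, $B$ is a summand of $B\res{\delta(H)}\ind{G\times G}$ by Theorem \ref{teo:projrel}; applying the coinvariants functor $(-)_{M\times M}$ and invoking Lemma \ref{lem:propertiescoinvtechn}(4) expresses $B_{M\times M}$ as a summand of a module induced from $\delta(H)(M\times M)/(M\times M) = \delta(HM/M)$. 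Hence each summand $X_M^f$ is $\delta(HM/M)$-projective, so by the finite case $f \in \tn{Tr}_{HM/M}^{G/M}(k[G/M]^{HM/M})$ and, via transitivity of trace (Lemma \ref{lem:Tracemapproperties}(2)) together with $HM \leqslant HN$, also $f \in \tn{Tr}_{HN/M}^{G/M}(k[G/M]^{HN/M})$. Summing over $f$ yields $\varphi_M(e) \in \tn{Tr}_{HN/M}^{G/M}(k[G/M]^{HN/M})$ for all $M$ in the cofinal set of open normal subgroups of $G$ contained in $N \cap N_0$.

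Finally, set $Z_M := \{x \in k[G/M]^{HN/M} : \tn{Tr}_{HN/M}^{G/M}(x) = \varphi_M(e)\}$. Each $Z_M$ is a non-empty closed linear variety in the finite-dimensional space $k[G/M]^{HN/M}$, and $\varphi_{M'M}(Z_M) \subseteq Z_{M'}$ by $G$-equivariance of $\varphi_{M'M}$ and its commutation with traces, so $\{Z_M\}$ is an inverse system. Lemma \ref{lem:propertiesofLCspaces}(3) produces $a = (a_M) \in \varprojlim Z_M \subseteq k\db{G}^{HN}$, and by construction $\varphi_M(\tn{Tr}_{HN}^G(a)) = \tn{Tr}_{HN/M}^{G/M}(a_M) = \varphi_M(e)$ for every $M$ in the cofinal set, forcing $\tn{Tr}_{HN}^G(a) = e$. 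The main obstacle, as flagged above, is precisely the mismatch $\varphi_M(e) \neq e_M$, dissolved by the coinvariant decomposition that transfers $\delta(HM/M)$-projectivity simultaneously to every $X_M^f$ lying above $B$.
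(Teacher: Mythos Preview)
Your proof is correct, but the route differs from the paper's. The paper argues directly at the level of $B$ using the endomorphism trace characterization (Theorem \ref{teo:projrel}, item 8). For the forward direction, given $e=\tn{Tr}_{HN}^G(a_N)$ it builds the $k\db{\delta(H)(N\times N)}$-endomorphism $\alpha_N(y)=a_Ny$ of $B$, checks $\tn{Tr}_{\delta(H)(N\times N)}^{\delta(G)(N\times N)}(\alpha_N)=\tn{id}_B$, and then bootstraps via $\delta(G)(N\times N)$-projectivity of $B$ (Lemma \ref{lem:isomorphismofk[[G]]asbimodule}). For the reverse direction it observes that $\delta(H)$-projectivity gives $(G\times HN)$-projectivity with $G\times HN$ \emph{open} in $G\times G$, so Theorem \ref{teo:projrel}(8) produces a $k\db{G\times HN}$-endomorphism $\alpha_N$ of $B$ with $\tn{id}_B=\tn{Tr}_{G\times HN}^{G\times G}(\alpha_N)$; the finite argument of \cite[Proposition 6.2.3]{Lin2} then runs verbatim on $B$ itself to yield $e\in\tn{Tr}_{HN}^G(k\db{G}^{HN})$ directly, with no inverse-limit assembly. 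Your approach instead systematically pushes everything to finite quotients $G/M$, invokes the finite statement as a black box at each level (handling all blocks $X_M^f$ over $B$, not just the distinguished $X_M$), and then reassembles via Proposition \ref{prop:relprojforblocks} in one direction and a linear-variety inverse limit in the other. Your argument is more bookkeeping-heavy but is uniformly ``reduce to finite, take limits''; the paper's is shorter because it exploits the openness of $HN$ in $G$ to avoid the second inverse limit altogether, at the cost of working with the module $B$ rather than with algebras.
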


\begin{proof}
	Suppose that  $e\in \tn{oTr}_{H}^{G}(k\db{G}^{H})$ so that for each $N\trianglelefteq_{O}G$, there is $a_{N}\in k\db{G}^{HN}$ such that $e=\tn{Tr}_{HN}^{G}(a_{N})$. We claim first that $B\downarrow_{\delta(G)(N\times N)}$ is $\delta(H)(N\times N)$-projective. Consider the continuous $k\db{\delta(H)(N\times N)}$-module homomorphism $\alpha_{N}:B\to B$ given by $y\mapsto a_{N}y$.  Observe that if $R$ is a set of left coset representatives of $HN$ in $G$, then $\{(r,r)\,:\,r\in R\}$ is a set of left coset representatives of $\delta(H)(N\times N)$ in $\delta(G)(N\times N)$. So
	\begin{eqnarray*}
		\tn{Tr}_{\delta(H)(N\times N)}^{\delta(G)(N\times N)}(\alpha_{N})(e)&=&\sum_{(r,r)\in \delta(G)(N\times N)/\delta(H)(N\times N)}(r,r)\alpha_{N}((r^{-1},r^{-1})e)\\
		&=&\sum_{(r,r)\in \delta(G)(N\times N)/\delta(H)(N\times N)}r\alpha_{N}(r^{-1}er))r^{-1}\\
		&=&\sum_{(r,r)\in \delta(G)(N\times N)/\delta(H)(N\times N)}ra_{N}r^{-1}err^{-1}\\
		&=&\tn{Tr}_{HN}^{G}(a_{N})e\\
		&=&e.
	\end{eqnarray*}
	
	Thus $\tn{Tr}_{\delta(H)(N\times N)}^{\delta(G)(N\times N)}(\alpha_{N})$ is the identity map on $B$ and so by Theorem \ref{teo:projrel}, $B\downarrow_{\delta(G)(N\times N)}$ is $\delta(H)(N\times N)$-projective.  Let $Z$ be a $k\db{\delta(H)(N\times N)}$-module such that
	$$ B\res{\delta(G)(N\times N)}\ds Z\ind{\delta(G)(N\times N)}.$$
	Then, since $B$ is $\delta(G)(N\times N)$-projective for each $N\trianglelefteq_{O}G$ by Lemma \ref{lem:isomorphismofk[[G]]asbimodule}, we have
	\begin{eqnarray*}
		B&\ds &B\downarrow_{\delta(G)(N\times N)}\uparrow^{G\times G}\ds Z\uparrow^{\delta(G)(N\times N)}\uparrow^{G\times G}
	\end{eqnarray*}
	which implies that $B$ is relatively $\delta(H)(N\times N)$-projective for each $N\trianglelefteq_{O}G$, and hence is relatively $\delta(H)$-projective by Theorem \ref{teo:projrel}.
	
	To show the reverse implication, suppose that $B$ is relatively $\delta(H)$-projective.  Then $B$ is relatively $(G\times HN)$-projective for each $N\trianglelefteq_{O}G$. By Theorem \ref{teo:projrel}, there is thus a continuous $k\db{G\times HN}$-module endomorphism $\alpha_{N}$ of $B$ such that $\tn{id}_{B}=\tn{Tr}_{G\times HN}^{G\times G}(\alpha_{N})$.  Arguing as in the finite case \cite[Proposition 6.2.3]{Lin2} it follows that $e\in \tn{Tr}_{HN}^{G}(k\db{G}^{HN})$, and hence $e\in\bigcap_{N}\tn{Tr}_{HN}^{G}(k\db{G}^{HN})=\tn{oTr}_{H}^{G}(k\db{G}^{H})$, as required.
\end{proof}

\begin{prop}\label{prop:equivdefgrp}
	Let $D$ be a closed subgroup of $G$ and let $B$ be a block of $G$. The following statements are equivalent.
	\begin{enumerate}
		\item $D$ is a defect group of $B$.
		\item $\delta(D)$ is a vertex of $B$ as a $k\db{G\times G}$-module.
	\end{enumerate}
\end{prop}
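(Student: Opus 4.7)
The plan is to deduce the equivalence as an essentially formal consequence of Proposition \ref{prop:equivrelproj}, which already translates the ``trace-theoretic'' condition $e\in \tn{oTr}_H^G(k\db{G}^H)$ into the ``relative projectivity'' condition that $B$ be $\delta(H)$-projective. Once this translation is in hand, the equivalence of the two statements in Proposition \ref{prop:equivdefgrp} reduces to a question about minimality.

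First I would record the elementary but essential observation that the diagonal embedding $\delta:G\to G\times G$ is a continuous closed injective group homomorphism, so it restricts to a homeomorphism between the lattice of closed subgroups of $G$ and the lattice of closed subgroups of $\delta(G)$. In particular, every closed subgroup of $\delta(D)$ is of the form $\delta(D')$ for a uniquely determined closed subgroup $D'$ of $D$, and $\delta(D')\lneq \delta(D)$ if and only if $D'\lneq D$.

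For the forward direction, suppose $D$ is a defect group of $B$, i.e.\ $e\in \tn{oTr}_D^G(k\db{G}^D)$ and $D$ is minimal (among closed subgroups of $G$) with this property. By Proposition \ref{prop:equivrelproj}, $B$ is $\delta(D)$-projective. If $\delta(D')$ were a proper closed subgroup of $\delta(D)$ relative to which $B$ is projective, then by the subgroup correspondence $D'$ would be a proper closed subgroup of $D$, and by Proposition \ref{prop:equivrelproj} again we would have $e\in \tn{oTr}_{D'}^G(k\db{G}^{D'})$, contradicting the minimality of $D$. Hence $\delta(D)$ is a vertex of $B$.

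For the reverse direction, suppose $\delta(D)$ is a vertex of $B$ as a $k\db{G\times G}$-module. Then $B$ is $\delta(D)$-projective, so $e\in \tn{oTr}_D^G(k\db{G}^D)$ by Proposition \ref{prop:equivrelproj}. If some closed $D'\lneq D$ also satisfied $e\in \tn{oTr}_{D'}^G(k\db{G}^{D'})$, then Proposition \ref{prop:equivrelproj} would give that $B$ is $\delta(D')$-projective with $\delta(D')\lneq \delta(D)$, contradicting minimality of the vertex $\delta(D)$. Hence $D$ is minimal, and so a defect group. The main step has already been done in Proposition \ref{prop:equivrelproj}; the only genuinely new content here is the trivial subgroup correspondence under $\delta$, so I anticipate no serious obstacle.
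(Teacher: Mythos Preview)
Your proposal is correct and is precisely the argument the paper has in mind: the paper's proof is the single line ``This follows from Proposition \ref{prop:equivrelproj}'', and you have simply unpacked that line using the subgroup correspondence under $\delta$.
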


\begin{proof}
	This follows from Proposition \ref{prop:equivrelproj}.
\end{proof}

\begin{prop}\label{prop:defectgrpareconjclassofpro-pgrps}
	Let $G$ be a profinite group and $B$ a block of $G$. The defect groups of $B$ are a conjugacy class of pro-$p$ subgroups of $G$.
\end{prop}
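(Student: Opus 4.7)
The plan is to deduce the statement entirely from the corresponding statement about vertices (Proposition \ref{teo:verticesareconjugate}) via the correspondence between defect groups of $B$ and vertices of $B$ of the form $\delta(D)$ established in Proposition \ref{prop:equivdefgrp}.

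First I would invoke Proposition \ref{prop:defgrpchar} to produce at least one vertex of $B$ of the form $\delta(D_0)$ with $D_0$ a pro-$p$ subgroup of $G$. By Proposition \ref{teo:verticesareconjugate} the set of vertices of $B$ (as a $k\db{G\times G}$-module) is a single $(G\times G)$-conjugacy class of pro-$p$ subgroups of $G\times G$. Combined with Proposition \ref{prop:equivdefgrp}, every defect group $D$ of $B$ gives rise to a vertex $\delta(D)$, which is $(G\times G)$-conjugate to $\delta(D_0)$; since $\delta(D)\cong D$ as topological groups, $D$ is itself pro-$p$.

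It remains to translate $(G\times G)$-conjugacy of diagonal subgroups into $G$-conjugacy of their ``diagonalized'' copies. Suppose $D$ and $D'$ are two defect groups of $B$, so by Proposition \ref{prop:equivdefgrp} both $\delta(D)$ and $\delta(D')$ are vertices of $B$, and hence by Proposition \ref{teo:verticesareconjugate} there exists $(g_1,g_2)\in G\times G$ with $(g_1,g_2)\,\delta(D)\,(g_1,g_2)^{-1}=\delta(D')$. For each $d\in D$ this equation forces $(g_1 d g_1^{-1},\,g_2 d g_2^{-1})\in\delta(D')$, i.e.\ $g_1 d g_1^{-1}=g_2 d g_2^{-1}$, so that $g_2^{-1}g_1$ centralizes $D$ and $D'=g_1Dg_1^{-1}$. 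Conversely, if $D$ is a defect group and $D'=gDg^{-1}$ for some $g\in G$, then $\delta(D')=(g,g)\,\delta(D)\,(g,g)^{-1}$ is a vertex of $B$, so $D'$ is a defect group by Proposition \ref{prop:equivdefgrp}. This shows that the defect groups of $B$ form a single $G$-conjugacy class.

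I do not anticipate any serious obstacle: the argument is essentially a matter of unwinding the vertex/defect-group correspondence, and the only mildly delicate point is the passage from $(G\times G)$-conjugacy of $\delta(D)$ and $\delta(D')$ to $G$-conjugacy of $D$ and $D'$, which as indicated above follows by reading off both coordinates of the conjugation equation.
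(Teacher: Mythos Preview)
Your proposal is correct and follows essentially the same approach as the paper: pass from defect groups to vertices via Proposition~\ref{prop:equivdefgrp}, invoke Proposition~\ref{teo:verticesareconjugate} to get $(G\times G)$-conjugacy of the $\delta(D)$'s as pro-$p$ subgroups, and then perform the coordinate calculation to descend to $G$-conjugacy. The paper merely cites the last step as a ``direct calculation'' (referring to \cite[p.~98]{A}), whereas you write it out explicitly and also check the converse direction, so your version is in fact slightly more complete.
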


\begin{proof}
	If $C,D$ are defect groups of $B$ then they are pro-$p$ subgroups of $G$ and $\delta(C), \delta(D)$ are conjugate in $G\times G$ by Proposition \ref{prop:equivdefgrp} and Proposition \ref{teo:verticesareconjugate}.   As with finite groups (cf.\ \cite[p.\ 98]{A}) a direct calculation shows that this implies that $C$ and $D$ are conjugate in $G$.
\end{proof}

\begin{prop}\label{prop:defgrpopen}
	Let $B$ be a block of $G$ with defect group $D$ and let $P$ be a $p$-Sylow subgroup of $G$ containing $D$. Then $D$ is open in $P$.
\end{prop}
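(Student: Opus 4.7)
The plan is to exhibit a simple $k\db{G}$-module $S$ lying in $B$ and use the approximating finite blocks $X_N$ from Proposition \ref{prop:invlimblocks} to bound the supernatural index $[P:D]$ by $\dim S$.

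Such an $S$ exists: any nonzero pseudocompact algebra has a simple module (via the semisimple quotient by the Jacobson radical), and simple pseudocompact modules are finite-dimensional, so $B$ contains a simple finite-dimensional $k\db{G}$-module $S$. By Corollary \ref{cor:simplemodlies}, I may choose an open normal subgroup $N_0$ of $G$ such that $S$ lies in $X_N$ as a $k[G/N]$-module for every $N\in\mathcal{N}$ with $N\leqslant N_0$.

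Fix such an $N$ and let $D_N$ be a defect group of the finite-group block $X_N$. By Proposition \ref{prop:equivdefgrp}, $\delta(D_N)$ is a vertex of $X_N$ as a $k[(G/N)\times (G/N)]$-module, and by Proposition \ref{prop:relprojforblocks} combined with the hypothesis that $B$ is $\delta(D)$-projective, $X_N$ is $\delta(D)(N\times N)$-projective. Hence $\delta(D_N)$ is $(G/N)^2$-conjugate to a subgroup of $\delta(DN/N)$, and arguing as in the proof of Proposition \ref{prop:defectgrpareconjclassofpro-pgrps}, $D_N$ is $(G/N)$-conjugate to a subgroup of $DN/N$; after conjugating, I may assume $D_N\leqslant DN/N\leqslant PN/N$. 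Since $S$ lies in $X_N$, Proposition \ref{prop:modrelprojtodefgrp} (applied in the finite case $G/N$) ensures $S$ is $D_N$-projective, so its vertex in $G/N$ is contained in $D_N$. Green's divisibility theorem, as cited in the proof of Proposition \ref{prop:vtxopeninP}, then yields that $[PN/N:D_N]$ divides $\dim S$, and hence that $[PN/N:DN/N]$ divides $\dim S$.

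Finally, the open normal subgroups $P\cap N$ of $P$, as $N$ varies in the chosen cofinal family, form a neighborhood base of $1$ in $P$, and $[P:D(P\cap N)]=[PN/N:DN/N]$. Therefore
$$[P:D]\;=\;\sup_{N}[PN/N:DN/N]\;\leqslant\;\dim S\;<\;\infty,$$
so $D$ has finite index in $P$, i.e., $D$ is open in $P$. The main obstacle in executing this plan is establishing the inclusion $D_N\leqslant DN/N$ up to conjugacy: it requires combining the diagonal-vertex characterization of defect groups (Proposition \ref{prop:equivdefgrp}) with the relative projectivity transfer through the inverse system (Proposition \ref{prop:relprojforblocks}). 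Once that is in place, the remaining finite-group divisibility and cofinality arguments in $P$ are routine.
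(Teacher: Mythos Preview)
Your argument is correct, but it takes a considerably longer route than the paper's. The paper's proof is two lines: pick a simple $k\db{G}$-module $S$ lying in $B$; by Proposition~\ref{prop:modrelprojtodefgrp}, $S$ is relatively $D$-projective, so it has a vertex $Q\leqslant D$; then Proposition~\ref{prop:vtxopeninP} (already proved for arbitrary finite-dimensional indecomposable modules) says $Q$ is open in $P$, and since $Q\leqslant D\leqslant P$, so is $D$.

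Your detour through the blocks $X_N$, their defect groups $D_N$, and the conjugacy inclusion $D_N\leqslant DN/N$ via Propositions~\ref{prop:equivdefgrp} and~\ref{prop:relprojforblocks} is valid, but it effectively reproves Proposition~\ref{prop:vtxopeninP} in this special setting rather than simply citing it. Both arguments rest on the same finite-group input (Green's divisibility theorem), so nothing is gained by the longer path; in particular, the block inverse-system machinery and Corollary~\ref{cor:simplemodlies} are not needed here. The one advantage of your approach is that it foreshadows the method used later in Corollary~\ref{cor:defgrpsG/N-blocksX_N}, but for the present statement the paper's direct appeal to Proposition~\ref{prop:vtxopeninP} is the cleaner choice.
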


\begin{proof}
	Let $S$ be a simple $k\db{G}$-module lying in $B$.  By Proposition \ref{prop:modrelprojtodefgrp}, $S$ has a vertex $Q$ contained in $D$.  But $Q$ is open in $P$ by Proposition \ref{prop:vtxopeninP}, hence so is $D$.
\end{proof}

\subsection{Defect groups and inverse systems}

We show that defect groups are well-behaved with respect to the inverse system of finite dimensional blocks of Remark \ref{obs:Basinvlimoffiniteblocks}.  The following fact, due to Green in the finite case \cite[Theorem 12]{Gr}, is rarely mentioned for finite groups, but becomes incredibly useful for profinite groups because simple modules are finite dimensional:

\begin{prop}\label{prop:simplemodvtxD}
	Let $G$ be a profinite group and $B$ a block of $G$ with defect group $D$. There is a simple module $T$ lying in $B$ with vertex $D$.
\end{prop}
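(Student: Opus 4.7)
The plan is to reduce to Green's classical theorem for finite groups \cite[Theorem 12]{Gr} via the inverse system of blocks $\{X_N, \gamma_{MN}, N \in \mathcal{N}\}$ provided by Proposition \ref{prop:invlimblocks}.

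First I would show that, for every $N \in \mathcal{N}$, the defect group of the finite block $X_N$ of $G/N$ is exactly $DN/N$. Write $\bar{D}_N$ for the defect group of $X_N$ and $\tilde{D}_N \leqslant G$ for its preimage, so that $N \leqslant \tilde{D}_N \leqslant DN$. The inclusion $\bar{D}_N \leqslant DN/N$ is immediate from Propositions \ref{prop:relprojforblocks} and \ref{prop:equivdefgrp} applied to the fact that $D$ is a defect group of $B$. For the reverse, set $D' := \bigcap_{N \in \mathcal{N}} \tilde{D}_N$, a closed subgroup of $G$ contained in $\bigcap_N DN = D$. Since $D'N/N \leqslant \bar{D}_N$ for every $N$, Proposition \ref{prop:relprojforblocks} shows that $B$ is $\delta(D')$-projective, and then Proposition \ref{prop:equivdefgrp} places a $G$-conjugate of $D$ inside $D'$. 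Combined with $D' \leqslant D$ and the fact that two $G$-conjugate closed subgroups related by inclusion must coincide (having the same supernatural order), we conclude $D' = D$, so $D \leqslant \tilde{D}_N$ and thus $\bar{D}_N = DN/N$.

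Next, for each $N \in \mathcal{N}$, Green's theorem applied to the finite block $X_N$ produces a simple $k[G/N]$-module $T^{(N)}$ lying in $X_N$ with vertex $DN/N$. Viewed as a finite-dimensional $k\db{G}$-module, $T^{(N)}$ lies in $B$, and by Proposition \ref{prop:modrelprojtodefgrp} together with Lemma \ref{lem:vtxsimplemodule}, its $k\db{G}$-vertex $Q^{(N)}$ may be chosen as a closed subgroup of $D$ satisfying $Q^{(N)}(D \cap N) = D$.

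The main obstacle is the third and final step: extracting from the $N$-dependent family $\{T^{(N)}\}$ a single simple module whose $k\db{G}$-vertex is exactly $D$. The key auxiliary observation is that $\{D \cap N : N \in \mathcal{N}\}$ is cofinal among the open normal subgroups of the profinite group $D$: every $U \trianglelefteq_{O} D$ equals $V \cap D$ for some open $V \ni 1$ in $G$, and any $N \in \mathcal{N}$ with $N \leqslant V$ gives $D \cap N \leqslant U$. Combined with the standard identity $Q = \bigcap_{U \trianglelefteq_{O} D} QU$ valid for a closed subgroup $Q \leqslant D$, this shows that any simple $T$ with $k\db{G}$-vertex $Q(T) \lneq D$ must fall out of $\mathcal{T}_N^{\max}$ (the non-empty finite set of simples in $X_N$ with $k[G/N]$-vertex $DN/N$) for all sufficiently small $N \in \mathcal{N}$ contained in the kernel of $T$, while any simple with $k\db{G}$-vertex equal to $D$ remains in $\mathcal{T}_N^{\max}$ for every sufficiently small such $N$. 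Noting further the monotonicity that $T \in \mathcal{T}_N^{\max}$ implies $T \in \mathcal{T}_M^{\max}$ whenever $N \leqslant M$ with $M$ contained in the kernel of $T$, the sets $\mathcal{T}_N^{\max}$ assemble into an inverse system of non-empty finite sets, and a K\"onig's-lemma / compactness argument on this system produces a simple $T \in B$ that lies in $\mathcal{T}_N^{\max}$ for all sufficiently small $N$ in $\mathcal{N}$, which by the above characterization has $k\db{G}$-vertex equal to $D$.
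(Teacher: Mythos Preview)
Your argument has substantive gaps in both Step~1 and Step~3.

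\textbf{Step 1.} Your application of Proposition~\ref{prop:relprojforblocks} runs the inclusion the wrong way. From $D'N/N \leqslant \bar{D}_N$ you cannot conclude that $X_N$ is $\delta(D'N/N)$-projective: the vertex of $X_N$ as a $k[G/N\times G/N]$-module is $\delta(\bar{D}_N)$, so what you would need is $\bar{D}_N$ conjugate \emph{into} $D'N/N$, not the reverse. Relative projectivity with respect to a larger group never implies relative projectivity with respect to a smaller one. There is also a coherence problem: each $\bar{D}_N$ is only determined up to $G/N$-conjugacy, and you give no mechanism for making compatible choices across $N$, so the intersection $D' = \bigcap_N \tilde{D}_N$ has no controlled relationship to the relative projectivity of the individual $X_N$. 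In the paper the equality $\bar{D}_N = DN/N$ is Corollary~\ref{cor:defgrpsG/N-blocksX_N} and is deduced \emph{from} the present proposition, so an independent argument really is required here.

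\textbf{Step 3.} The sets $\mathcal{T}_N^{\max}$ do not assemble into an inverse system in the way you describe. For $N \leqslant M$ a typical $T \in \mathcal{T}_N^{\max}$ is a simple $k[G/N]$-module on which $M$ need not act trivially, so the assignment $T \mapsto T$ is not available; your clause ``with $M$ contained in the kernel of $T$'' cannot be imposed uniformly over the system without destroying the directed structure. Even when $M$ does act trivially on $T$, one does not know that $T$ lies in $X_M$: from $\varphi_{MN}(e_N)e_M = e_M$ the idempotent $\varphi_{MN}(e_N)$ is a sum of block idempotents of $k[G/M]$, only one of which is $e_M$, and $T$ could land in one of the others. (Corollary~\ref{cor:simplemodlies} gives the opposite implication: lying in $X_M$ forces lying in $X_N$ for $N\leqslant M$.) Since the totality of simples appearing in the various $X_N$ is in general infinite, no evident K\"onig-type argument extracts a single $T$.

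For contrast, the paper sidesteps both difficulties by choosing $T$ at the outset via a dimension criterion over $\overline{k}$ (minimal $p$-part among the dimensions of indecomposable summands of $\overline{k}\otimes_k S$, as $S$ ranges over the simples in $B$). Arguments of Kn\"orr and Green then force $X_N$ to have defect group $QN/N$, where $Q\leqslant D$ is the $k\db{G}$-vertex of this fixed $T$, and a single application of Proposition~\ref{prop:relprojforblocks} gives $Q=D$.
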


\begin{proof}
	Recall the notation $X_N$ of Remark \ref{obs:Basinvlimoffiniteblocks}.  Let $T$ be a simple $k\db{G}$-module lying in $B$ with the following property: in an algebraic closure $\overline{k}$ of $k$, the module $\overline{k}\otimes_k T$ has an indecomposable direct summand of dimension $p^ra$ with $p$ not dividing $a$ and $r$ as small as possible among all simple $k\db{G}$-modules lying in $B$.  By Proposition \ref{prop:modrelprojtodefgrp}, $T$ has a vertex $Q$ inside $D$.  Using Corollary \ref{cor:simplemodlies}, fix an inverse system $\mathcal{N}$ of open normal subgroups $N$ of $G$ acting trivially on $T$ and such that $T$ lies in $X_N$.  
	
	Given $N\in \mathcal{N}$, $T$ has vertex $QN/N$ as a $k[G/N]$-module by Lemma \ref{lem:vtxsimplemodule}.
	As simple $k[G/N]$-modules are simple as $k\db{G}$-modules, one may follow the arguments of \cite[Theorem 2.2]{Knorr} and \cite[Theorem 12]{Gr} to see that our choice of $T$ implies that $X_N$ has defect group $QN/N$.

	By Proposition \ref{prop:equivdefgrp}, $X_N$ thus has vertex $\delta(QN/N)$ as a $k[G/N \times G/N]$-module.  We have a natural isomorphism of groups $\delta(\frac{QN}{N})\cong\frac{\delta(Q)(N\times N)}{N\times N}$ and hence $X_N$ is relatively $\delta(Q)(N\times N)$-projective as a $k\db{G\times G}$-module.  It now follows from Proposition \ref{prop:relprojforblocks} that $B$ is $\delta(Q)$-projective and hence that $D=Q$.
\end{proof}

\begin{cor}\label{cor:defgrpsG/N-blocksX_N}
	Let $B$ be a block of
	$G$ with defect group $D$. There is $N_{0}\trianglelefteq_{O}G$ such that $X_N$ has defect group $DN/N$ for every open normal subgroup $N$ of $G$ contained in $N_0$.
\end{cor}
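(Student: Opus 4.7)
The plan is that this corollary is almost a reformulation of what the proof of Proposition \ref{prop:simplemodvtxD} already establishes; one only needs to package the conclusion uniformly in $N$ instead of cofinally. The key input will be the existence of a simple module $T$ in $B$ with vertex exactly $D$, together with the fact that a sufficiently small $N$ will act trivially on $T$ and place $T$ inside $X_N$.

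First I would invoke Proposition \ref{prop:simplemodvtxD} to fix a simple $k\db{G}$-module $T$ lying in $B$ with vertex $D$. Because $T$ is finite dimensional, there is an open normal subgroup of $G$ acting trivially on $T$; and by Corollary \ref{cor:simplemodlies} there is an $N' \trianglelefteq_O G$ such that $T$ lies in $X_N$ for every $N \leqslant N'$. Intersecting these two subgroups, I would take $N_0 \trianglelefteq_O G$ small enough that $N_0$ acts trivially on $T$ and $T$ lies in $X_N$ for every open normal subgroup $N$ of $G$ with $N \leqslant N_0$.

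Now fix any such $N \leqslant N_0$. Since $N$ acts trivially on $T$ and $T$ has vertex $D$ as a $k\db{G}$-module, Lemma \ref{lem:vtxsimplemodule} gives that $T$ has vertex $DN/N$ as a $k[G/N]$-module. The remaining point is that $T$, viewed as a simple $k[G/N]$-module in $X_N$, still satisfies the minimal-$p$-part property used in the proof of Proposition \ref{prop:simplemodvtxD}: indeed $T$ was chosen among all simple $k\db{G}$-modules lying in $B$, and every simple $k[G/N]$-module lying in $X_N$ is in particular a simple $k\db{G}$-module lying in $B$, so $T$ retains the minimality property inside $X_N$. Applying the Knörr--Green argument (cited as \cite[Theorem 2.2]{Knorr} and \cite[Theorem 12]{Gr} in the proof of Proposition \ref{prop:simplemodvtxD}) to the finite group $G/N$ and the block $X_N$ then yields that $X_N$ has defect group equal to the vertex of $T$ as a $k[G/N]$-module, namely $DN/N$.

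There is no serious obstacle; the only thing to be careful about is that one must shrink $N_0$ enough to guarantee both trivial action on $T$ and inclusion of $T$ in $X_N$ simultaneously, but both conditions are stable under passing to smaller open normal subgroups, so the intersection does the job. The substantive content was already extracted in Proposition \ref{prop:simplemodvtxD}; the corollary merely observes that the cofinal family produced there actually contains every sufficiently small open normal subgroup.
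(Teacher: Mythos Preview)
Your argument is correct but takes a different route from the paper's, and it leans on more than the bare \emph{statement} of Proposition \ref{prop:simplemodvtxD}: you need the specific simple $T$ chosen in its proof with the minimal-$p$-part property, not merely some simple of vertex $D$.  As you note, the Kn\"orr--Green argument requires that minimality, so strictly speaking you should begin by saying ``choose $T$ as in the proof of Proposition \ref{prop:simplemodvtxD}'' rather than ``invoke Proposition \ref{prop:simplemodvtxD}''.  Once that is done your transfer of the minimality property to $X_N$ is valid, since every simple $k[G/N]$-module in $X_N$ inflates to a simple $k\db{G}$-module in $B$.

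The paper instead uses only the \emph{conclusion} of Proposition \ref{prop:simplemodvtxD}, and bounds the defect group of $X_N$ from both sides by different means: the inclusion $DN/N\leqslant D'/N$ comes from Proposition \ref{prop:modrelprojtodefgrp} applied to the finite group $G/N$ (the vertex of the simple $T$ must sit inside a defect group of its block), while the reverse inclusion comes from the bimodule picture---$B$ is relatively $\delta(D)$-projective, so its summand $X_N$ is relatively $\delta(DN/N)$-projective, forcing $D'/N\leqslant DN/N$.  This is more modular: it does not reopen the Kn\"orr--Green machinery.  Your approach has the virtue of being a direct repackaging of the mechanism already deployed, at the cost of depending on internal details of an earlier proof rather than its statement.
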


\begin{proof}
	By Proposition \ref{prop:simplemodvtxD}, we may consider a simple $k\db{G}$-module $T$ lying in $B$ with vertex $D$.  Fix $N_0$ such that $T$ lies in $X_{N_0}$ and consider an open normal subgroup $N$ of $G$ contained in $N_0$.  By Lemma \ref{lem:vtxsimplemodule}, $T$ has vertex $DN/N$ as a $k[G/N]$-module and hence by Proposition \ref{prop:modrelprojtodefgrp}, there is a defect group $D'/N$ of $X_N$ containing $DN/N$.  But on the other hand, since $B$ is relatively $\delta(D)$-projective as a $k\db{G\times G}$-module, $X_N$ is relatively $\delta(DN/N)$-projective as a $k[G/N \times G/N]$-module, and so $DN/N = D'/N$, as required.
\end{proof}

We can now be more precise about the nature of defect groups:

\begin{prop}\label{prop:defectgroupsintersectionofsylows}
	Let $D$ be a defect group of the block $B$ of $G$ and let $P$ be a $p$-Sylow subgroup of $G$ containing $D$.  There is $g$ in the centralizer $\tn{C}_G(D)$ of $D$ in $G$ such that $D = P\cap {}^g\!P$.
\end{prop}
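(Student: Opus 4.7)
The plan is to reduce to the finite-group version of the theorem (e.g.\ \cite[Theorem 6.2.1]{B}) via Corollary \ref{cor:defgrpsG/N-blocksX_N}, and then to lift the resulting conjugator from a finite quotient back to $G$ by profinite compactness. First, Proposition \ref{prop:defgrpopen} tells us that $D$ is open in $P$, so $[P:D]$ is a finite $p$-power and $P\setminus D$ is clopen, hence compact. Since $\{N\cap P : N\trianglelefteq_{O} G\}$ is a filter base on $P$ with trivial intersection, I can find $N^{*}\trianglelefteq_{O} G$ satisfying both $N^{*}\subseteq N_{0}$ (with $N_{0}$ as in Corollary \ref{cor:defgrpsG/N-blocksX_N}) and $N^{*}\cap P\subseteq D$. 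This choice yields the identity $DN^{*}\cap P = D$: if $dn\in P$ with $d\in D$ and $n\in N^{*}$, then $n=d^{-1}(dn)\in N^{*}\cap P\subseteq D$, and hence $dn\in D$.

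By Corollary \ref{cor:defgrpsG/N-blocksX_N}, $X_{N^{*}}$ is a block of $G/N^{*}$ with defect group $DN^{*}/N^{*}$, and $PN^{*}/N^{*}$ is a Sylow $p$-subgroup of $G/N^{*}$ containing it. The finite-group case of the theorem then produces $\bar{g}\in \tn{C}_{G/N^{*}}(DN^{*}/N^{*})$ satisfying $DN^{*}/N^{*} = PN^{*}/N^{*} \cap {}^{\bar{g}}(PN^{*}/N^{*})$. The key observation is that it suffices to produce some $g\in \tn{C}_{G}(D)$ satisfying the mod-$N^{*}$ identity $PN^{*}\cap gPg^{-1}N^{*} = DN^{*}$: indeed $D\subseteq P\cap gPg^{-1}$ since $g$ centralises $D$, while conversely
\[ P\cap gPg^{-1}\;\subseteq\; PN^{*}\cap gPg^{-1}N^{*} \,=\, DN^{*}, \]
so $P\cap gPg^{-1}\subseteq DN^{*}\cap P = D$ by the identity above.

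The main obstacle is therefore to produce such a $g\in \tn{C}_{G}(D)$, because a naive lift of $\bar{g}$ need only centralise $D$ modulo $N^{*}$. To handle this I would introduce, for each open normal subgroup $N$ of $G$ contained in $N^{*}$, the closed subset
\[ Y_{N}\;=\;\{g\in G\,\mid\,[g,D]\subseteq N\text{ and }PN\cap gPg^{-1}N = DN\}\subseteq G, \]
which is non-empty by applying the finite case in $G/N$. The goal is to show that $\bigcap_{N}Y_{N}\neq\varnothing$ using the compactness of $G$: any $g$ in the intersection centralises $D$ exactly (as $[g,D]\subseteq N$ for all $N$ in a cofinal family forces $[g,D]=1$) and, by specialising to $N=N^{*}$, satisfies the required mod-$N^{*}$ condition. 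The delicate step is establishing the finite intersection property for $\{Y_{N}\}$, since the naive inclusion $Y_{N_{1}\cap N_{2}}\subseteq Y_{N_{1}}\cap Y_{N_{2}}$ fails: the image of a subgroup intersection under a quotient map need not equal the intersection of the images. I expect this will require refining the finite-case choices $\bar{g}_{N}$ coherently across the inverse system of blocks $\{X_{N}\}$ from Proposition \ref{prop:invlimblocks}, perhaps by exploiting the finiteness of each candidate set in $G/N$ together with a pigeonhole/cofinality argument to extract a compatible thread.
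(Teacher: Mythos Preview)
Your approach coincides with the paper's: both reduce to the finite theorem via Corollary \ref{cor:defgrpsG/N-blocksX_N} and then invoke compactness of $G$ to lift the resulting conjugator. The paper is much terser: for each $N$ in the cofinal system it simply introduces the closed set $C_{N}$ of those $g$ whose image in $G/N$ lies in $\tn{C}_{G/N}(DN/N)$ and satisfies $DN=PN\cap{}^{g}\!PN$ --- this is precisely your $Y_{N}$ --- observes that $C_{N}$ is non-empty by the finite case, and asserts ``by compactness, the intersection $C$ of the $C_{N}$ is non-empty'', concluding that any $g\in C$ gives $D=P\cap{}^{g}\!P$. Your reduction to a single mod-$N^{*}$ condition (choosing $N^{*}$ with $DN^{*}\cap P=D$) is the unwinding of the paper's last clause.

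You have therefore matched the paper's argument essentially line for line, and you have correctly isolated the one point the paper does not justify: the finite intersection property of the family $\{C_{N}\}$. Your worry is legitimate --- for $N'\leqslant N$ the inclusion $Y_{N'}\subseteq Y_{N}$ is not automatic, because the image under $G/N'\to G/N$ of an intersection of Sylow subgroups need not be the intersection of the images (the phenomenon $(AK)\cap(BK)\supsetneq(A\cap B)K$ with $K$ normal is exactly what obstructs it). The paper does not spell out why compactness applies here, and your proposal stops at the same place: you recognise the gap but do not fill it. Your closing sketch (``refining the finite-case choices coherently across the inverse system \ldots pigeonhole/cofinality'') is too vague to count as a proof; to complete the argument you would need either to exhibit the nesting $Y_{N'}\subseteq Y_{N}$ directly (which appears to require using that $DM/M$ really is the defect group of $X_{M}$, not merely some $p$-subgroup), or to replace the $Y_{N}$ by a better-behaved family of closed sets.
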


\begin{proof}
	By Corollary \ref{cor:defgrpsG/N-blocksX_N}, work within the cofinal system of $N$ for which $X_N$ has defect group $DN/N$.  By the finite version of this result \cite[Theorem 12.3.3]{Web} there is a non-empty closed set $C_N$ of $g\in \tn{C}_G(DN)$ such that $DN = PN\cap {}^g\!PN$.  By compactness, the intersection $C$ of the $C_N$ is non-empty and if $g\in C$ then $D = P\cap {}^g\!P$.
\end{proof}

\begin{rem}
	Propositions \ref{prop:defgrpopen} and \ref{prop:defectgroupsintersectionofsylows} together say that the candidates for a defect group may be quite limited.  For instance, if $G$ has the property that the intersection of a $p$-Sylow subgroup $P$ with any proper conjugate of $P$ has infinite index in $P$, or if $\textnormal{C}_G(D) \leqslant {N}_G(P)$ for every open subgroup $D$ of $P$, then every block of $G$ has defect group $P$.  
\end{rem}

\begin{cor}\label{cor:defectgrpmaximalinnormalizer}
	Let $G$ be a profinite group and $B$ a block of $G$ with defect group $D$. Then $D$ is the largest normal pro-$p$ subgroup of $\tn{N}_G(D)$. 
\end{cor}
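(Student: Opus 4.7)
The plan is to prove the corollary in two parts: first, check that $D$ is indeed a closed normal pro-$p$ subgroup of $\tn{N}_G(D)$, and second, show that every closed normal pro-$p$ subgroup $Q$ of $\tn{N}_G(D)$ is contained in $D$.

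The first part is immediate from the definitions: $D$ is normal in its own normalizer, and $D$ is pro-$p$ by Proposition \ref{prop:defectgrpareconjclassofpro-pgrps}. For the second part, let $Q$ be an arbitrary closed normal pro-$p$ subgroup of $\tn{N}_G(D)$. Since both $Q$ and $D$ lie in $\tn{N}_G(D)$ with $Q$ normal there, the subset $QD$ is a closed subgroup of $G$. I would then show that $QD$ is pro-$p$: since $QD/Q \iso D/(Q\cap D)$ is a quotient of the pro-$p$ group $D$, and $Q$ itself is pro-$p$, the extension $QD$ is pro-$p$ as well.

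By the profinite version of Sylow's theorem, $QD$ is contained in some $p$-Sylow subgroup $P$ of $G$. In particular $D \leqslant P$, so Proposition \ref{prop:defectgroupsintersectionofsylows} applies: there exists $g \in \tn{C}_G(D)$ with $D = P \cap {}^g P$. Since $\tn{C}_G(D) \subseteq \tn{N}_G(D)$ and $Q$ is normal in $\tn{N}_G(D)$, we have ${}^g Q = Q$. Combining $Q \leqslant P$ with $Q = {}^g Q \leqslant {}^g P$ yields $Q \leqslant P \cap {}^g P = D$, as desired.

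There is no serious obstacle; the only minor subtlety is confirming that $QD$ is pro-$p$ (as opposed to merely a closed subgroup of a pro-$p$ Sylow, which is what one would want a priori), but this follows from the standard fact that extensions of pro-$p$ groups by pro-$p$ groups are pro-$p$.
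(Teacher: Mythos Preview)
Your proof is correct and is exactly the argument the paper has in mind: it simply cites Proposition~\ref{prop:defectgroupsintersectionofsylows} and says the proof is ``just like for finite groups \cite[Corollary 12.3.4]{Web}'', which is precisely the computation you have written out (take a normal pro-$p$ subgroup $Q$ of $\tn{N}_G(D)$, embed $QD$ in a $p$-Sylow $P$, write $D = P\cap {}^g\!P$ with $g\in \tn{C}_G(D)$, and use normality of $Q$ to get $Q\leqslant D$). Your parenthetical remark in the final paragraph is slightly garbled---a closed subgroup of a pro-$p$ Sylow \emph{would} be pro-$p$---but the logical direction you actually need (pro-$p$ $\Rightarrow$ contained in a Sylow) is stated and used correctly.
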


\begin{proof}
	Given Proposition \ref{prop:defectgroupsintersectionofsylows}, the proof of this is just like for finite groups \cite[Corollary 12.3.4]{Web}.
\end{proof}

\begin{prop}\label{prop:blockidempininvlimoftraces}
	Let $G$ be a profinite group and $B$ a block of $G$  with block idempotent $e$. A closed subgroup $D$ of $G$ is a defect group of $B$ if, and only if, it is minimal among the closed subgroups $R$ of $G$ such that $|G:R|_p<\infty$ with the property that
	$$e\in \widetilde{\tn{Tr}}_{R}^{G}(k\db{G}^{R}).$$
\end{prop}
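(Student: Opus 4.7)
The plan is to reduce both directions to the corresponding classical statement for finite groups applied at each quotient $G/N$, using the inverse system $\{X_N,\gamma_{MN}\}$ of blocks from Remark \ref{obs:Basinvlimoffiniteblocks}. The finite fact I will invoke is that, for a block of a finite group $H$ with block idempotent $f$ and defect group $d$, a subgroup $r\leqslant H$ contains a conjugate of $d$ if and only if $f\in\widetilde{\tn{Tr}}_r^H(kH^r)$; for a finite group this is equivalent to $f\in\tn{Tr}_r^H(kH^r)$, since $|H:r|_{p'}$ is invertible in $k$, so it is the standard defect group characterization.

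For the forward implication, assume $D$ is a defect group of $B$. Proposition \ref{prop:defgrpopen} gives $|G:D|_p = |P:D|<\infty$, while Definition \ref{def:defgrp1} together with Lemma \ref{lem:invlimoffintrace} yields $e\in\widetilde{\tn{Tr}}_D^G(k\db{G}^D)$. For minimality, suppose $R\leqslant D$ is closed with $|G:R|_p<\infty$ and $e=\widetilde{\tn{Tr}}_R^G(y)$. For each $N$ in the cofinal set $\mathcal{N}$ of Definition \ref{def:modifiedtrace}, apply $\varphi_N$ to this equality and then multiply by the block idempotent $f_N$ of $X_N$; since $f_N$ is a summand of $\varphi_N(e)$ we have $f_N\varphi_N(e)=f_N$, and using that $f_N$ is central together with Lemma \ref{Lemma tildeTr properties}(3) this rearranges to $f_N=\widetilde{\tn{Tr}}_{RN/N}^{G/N}(f_N\varphi_N(y))$. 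By Corollary \ref{cor:defgrpsG/N-blocksX_N}, for $N$ small enough $X_N$ has defect group $DN/N$, and the finite fact forces $RN/N$ to contain a conjugate of $DN/N$. Combined with $RN\subseteq DN$, comparing orders yields $RN=DN$ on a cofinal family of $N$, and closure of $R$ and $D$ as subgroups of $G$ gives $R=D$.

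For the reverse implication, let $R$ be minimal as stated and pick an actual defect group $D_0$ of $B$. By the forward direction $D_0$ itself satisfies the same hypotheses as $R$, so it suffices to show that $R$ contains a $G$-conjugate of $D_0$: minimality then forces equality, making $R$ a defect group. The same multiplication argument above applied to $R$ in place of $D$ gives $f_N\in\widetilde{\tn{Tr}}_{RN/N}^{G/N}(k[G/N]^{RN/N})$ for $N$ in a suitable cofinal family, and the finite fact produces $g_N\in G/N$ with $g_N(D_0N/N)g_N^{-1}\subseteq RN/N$. The set $\mathcal{G}_N:=\{g\in G:gD_0g^{-1}N\subseteq RN\}$ is therefore a non-empty closed union of $N$-cosets, and a direct check shows $\mathcal{G}_N\subseteq\mathcal{G}_M$ whenever $N\subseteq M$. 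Compactness of $G$ produces some $g\in\bigcap_N\mathcal{G}_N$, and this $g$ satisfies $gD_0g^{-1}\subseteq\bigcap_N RN = R$, as required.

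The main obstacle will be the bookkeeping of the several cofinal families of open normal subgroups involved (the $\mathcal{N}$ of Definition \ref{def:modifiedtrace} where $\widetilde{\tn{Tr}}$ is computed levelwise, and the family from Corollary \ref{cor:defgrpsG/N-blocksX_N} where $X_N$ has defect group $DN/N$) and the verification that their common refinement remains cofinal, so that the multiplicative manipulations involving the idempotents $f_N$ and $\varphi_N(e)$ pass cleanly to the inverse limit. Everything else reduces to standard compactness arguments and the formal properties of $\widetilde{\tn{Tr}}$ already established in Lemma \ref{Lemma tildeTr properties}.
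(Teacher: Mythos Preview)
Your argument is correct, and it takes a somewhat different route from the paper's. The paper does not work with the block idempotents $f_N$ of the finite blocks $X_N$ directly; instead, once $e\in\widetilde{\tn{Tr}}_R^G(k\db{G}^R)$ is assumed, it invokes Proposition~\ref{prop:simplemodvtxD} to pick a simple $k\db{G}$-module $T$ lying in $B$ with vertex $D$, and then shows that the relation $\varphi_N(e)\in\tn{Tr}_{RN/N}^{G/N}(k[G/N]^{RN/N})$ forces $T$ to be relatively $RN$-projective for every $N$, hence relatively $R$-projective by Theorem~\ref{teo:projrel}, whence $R$ contains a conjugate of the vertex $D$.  Your version bypasses the module $T$ entirely: you multiply the projected identity by the finite block idempotent $f_N$ and appeal to the classical trace characterization of defect groups in $k[G/N]$ to get $RN/N\supseteq {}^{g_N}(DN/N)$, then conclude by a compactness argument in $G$.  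Both approaches reduce to finite quotients via Corollary~\ref{cor:defgrpsG/N-blocksX_N}; the difference is whether one passes through relative projectivity of a simple module (the paper) or stays purely at the level of central idempotents and the finite trace map (your proposal).  Your route is arguably cleaner in that it avoids the rather delicate Proposition~\ref{prop:simplemodvtxD}, at the price of citing the finite defect-group/trace characterization as a black box.  You also make the converse direction explicit via the compactness argument on the sets $\mathcal{G}_N$; the paper's written proof only treats subgroups $R\subseteq D$, leaving the reader to observe that the same argument with $T$ yields $^gD\subseteq R$ in general.
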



\begin{proof}
When $D$ is a defect group of $B$, $|G:D|_p<\infty$ by Proposition \ref{prop:defgrpopen}, so by Lemma \ref{lem:invlimoffintrace}, $e\in \widetilde{\tn{Tr}}_{D}^{G}(k\db{G}^{D})$.  Let $R$ be an open subgroup of $D$ with the property that 
	$$e\in \widetilde{\tn{Tr}}_{R}^{G}(k\db{G}^{R}).$$
	We must check that $R=D$.  
	
	By Proposition \ref{prop:simplemodvtxD}, let $T$ be a simple $k\db{G}$-module lying in $B$ with vertex $D$.  We work within the cofinal inverse system of $N$ for which $T$ is in $X_N$  (as we may by Corollary \ref{cor:simplemodlies}).  Fix such an $N$.  By hypothesis, $\varphi_{N}(e)=\tn{Tr}_{RN/N}^{G/N}(a)$ for some $a$ in $k[G/N]^{RN/N}$.  The $k[RN/N]$-module endomorphism $\beta:T\to T$ sending $x$ to $ax$ is such that $\tn{id}_{T}=\tn{Tr}_{RN/N}^{G/N}(\beta_{N})$, and hence $T = T_N$ is relatively $RN/N$-projective.  Since $N$ was arbitrary, $T$ is thus relatively $R$-projective as a $k\db{G}$-module by Theorem \ref{teo:projrel}.  But $R$ is a subgroup of $D$ and $D$ is a vertex of $T$, so $R=D$ as required.
\end{proof}

\subsection{The Brauer homomorphism}

We give two further characterizations of defect groups.

Consider as ever the continuous action of $G$ on $k\db{G}$ by conjugation.  When $D$ is a closed subgroup of $G$ and $Q$ an open subgroup of $D$, then $\tn{Tr}_{Q}^{D}(k\db{G}^{Q})$ is an ideal of $k\db{G}^D$ by Part 1 of Lemma \ref{lem:Tracemapproperties}.  Thus $\sum\limits_{Q\lneqq_{O}D}\tn{Tr}_{Q}^{D}(k\db{G}^{Q})$ is an abstract ideal of $k\db{G}^{D}$ and so its closure is a closed ideal of $k\db{G}^D$ \cite[Theorem 4.2]{War}.

\begin{defi}\cite[Definition 3.2]{P2}\label{def:Brquotient}\label{def:Brhom}
	Let $G$ be a profinite group and $D$ a closed pro-$p$ subgroup of $G$.  The corresponding \emph{Brauer quotient} is the quotient algebra 
	
	$$      k\db{G}^{[D]} := k\db{G}^{D}/\overline{\sum\limits_{Q\lneqq_{O}D}\tn{Tr}_{Q}^{D}(k\db{G}^{Q})}.$$
	
	The \emph{Brauer homomorphism} is the natural projection
	$$\tn{Br}_{D}:k\db{G}^{D}\to k\db{G}^{[D]}.$$ 
\end{defi}

\begin{prop}\label{prop:defgrpcharbyBr_D}
	Let $B$ be a block of a profinite group $G$ with block idempotent $e$. Then $B$ has defect group $D$ if, and only if, $e \in \tn{oTr}_{D}^{G}(k\db{G}^{D})$ and $\tn{Br}_{D}(e)\neq 0$.
\end{prop}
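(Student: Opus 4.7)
I will prove both directions separately; the main obstacle lies in the forward direction, which I will handle by extracting a module-theoretic witness. For the backward direction, assume $e \in \tn{oTr}_D^G(k\db{G}^D)$ and $\tn{Br}_D(e) \neq 0$, and suppose for contradiction that $D$ is not a defect group. By Propositions \ref{prop:equivrelproj} and \ref{prop:equivdefgrp}, $B$ has vertex $\delta(D'')$ for some defect group $D''$, and since $B$ is $\delta(D)$-projective, Proposition \ref{teo:verticesareconjugate} places $\delta(D'')$ in a $G \times G$-conjugate of $\delta(D)$; unpacking and using Proposition \ref{prop:defectgrpareconjclassofpro-pgrps} I may assume $D'' \leq D$, whence $D'' \lneqq D$. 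For each $N \trianglelefteq_O G$, write $e = \tn{Tr}_{D''N}^G(a_N)$ with $a_N \in k\db{G}^{D''N}$; Mackey's formula (Lemma \ref{lem:Tracemapproperties}) gives
$$e = \sum_{g \in D\backslash G/D''N} \tn{Tr}_{D\cap{}^g(D''N)}^D({}^ga_N).$$
Applying $\tn{Br}_D$ annihilates all terms with $D\cap{}^g(D''N) \lneqq_O D$; a term survives only if $D \leq {}^g(D''N) = ({}^gD'')N$ (by normality of $N$), which by comparing supernatural orders in $G/N$ forces $DN = ({}^gD'')N$. Picking $d \in D \setminus D''$ and then $N$ with $d \notin D''N$ (using $\bigcap_N D''N = D''$) yields $|DN/N| > |D''N/N|$; by normality of $N$ and invariance of supernatural orders under conjugation, $|({}^gD'')N/N| = |D''N/N|$ for every $g \in G$, so no term survives, giving $\tn{Br}_D(e) = 0$, a contradiction.

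For the forward direction, suppose $D$ is a defect group of $B$; then $e \in \tn{oTr}_D^G$ by definition, and it remains to show $\tn{Br}_D(e) \neq 0$. I fix a simple $k\db{G}$-module $T$ lying in $B$ with vertex $D$ (Proposition \ref{prop:simplemodvtxD}). The structural action $\rho: k\db{G} \to \tn{End}_k(T)$ is a continuous $G$-algebra homomorphism with $\rho(e) = \tn{id}_T$, and by $G$-equivariance it sends $\tn{Tr}_Q^D(k\db{G}^Q)$ into $\tn{Tr}_Q^D(\tn{End}_{k\db{Q}}(T))$ for each $Q \lneqq_O D$. Since $\tn{End}_k(T)$ is finite-dimensional (hence discrete) closures in the target collapse, so $\rho$ descends to $\overline{\rho}: k\db{G}^{[D]} \to \tn{End}_k(T)^{[D]}$ with $\overline{\rho}(\tn{Br}_D(e)) = \tn{Br}_D(\tn{id}_T)$. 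It thus suffices to show $\tn{Br}_D(\tn{id}_T) \neq 0$, which is the heart of the argument.

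Decompose $T\res{D} = \bigoplus_{j=1}^r V_j$ into (finitely many, finite-dimensional) indecomposable $k\db{D}$-summands. The witnesses $T \to T\res{D}\ind{G} \to T$ of the $D$-projectivity of $T$ decompose through the summands $V_j\ind{G}$; since $\tn{End}_{k\db{G}}(T)$ is local (Proposition \ref{prop:localEndring}), some component is a unit, giving $T \ds V\ind{G}$ for $V = V_{j_0}$. Writing $Q$ for the vertex of $V$ as a $k\db{D}$-module, $T$ is relatively $Q$-projective, so $Q$ contains a $G$-conjugate of $D$ (Proposition \ref{teo:verticesareconjugate}); together with $Q \leq D$ and matching supernatural orders this forces $Q = D$. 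Suppose now that $\tn{id}_T = \sum_{i=1}^n \tn{Tr}_{Q_i}^D(\alpha_i)$ with $Q_i \lneqq_O D$ and $\alpha_i \in \tn{End}_{k\db{Q_i}}(T)$, and let $f \in \tn{End}_{k\db{D}}(T)$ be the idempotent onto $V$. Lemma \ref{lem:Tracemapproperties} gives
$$\tn{id}_V = f\tn{id}_T f = \sum_i \tn{Tr}_{Q_i}^D(f\alpha_i f) \in \tn{End}_{k\db{D}}(V),$$
and locality of $\tn{End}_{k\db{D}}(V)$ forces some $\tn{Tr}_{Q_i}^D(f\alpha_i f) = u$ to be a unit. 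Then $\tn{id}_V = \tn{Tr}_{Q_i}^D(u^{-1}f\alpha_i f)$ (Lemma \ref{lem:Tracemapproperties} again, as $u^{-1}$ is $D$-fixed), so $V$ is $Q_i$-projective by Higman's criterion (Theorem \ref{teo:projrel}), contradicting $Q = D$. Hence $\tn{Br}_D(\tn{id}_T) \neq 0$ and $\tn{Br}_D(e) \neq 0$.
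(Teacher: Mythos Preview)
Your proof is correct. The backward direction follows the same Mackey-decomposition idea as the paper: both arguments pick an open normal subgroup $N$ at which the defect group $D''$ (your notation; $R$ in the paper) is visibly smaller than $D$, and then show every summand $\tn{Tr}_{D\cap{}^g(D''N)}^{D}(-)$ is killed by $\tn{Br}_D$. The paper pins this down by choosing $M$ with $D\cap RM = R$ and a normalizer argument; you do it by a cardinality comparison $|DN/N|>|({}^gD'')N/N|$. These are equivalent bookkeeping choices.

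The forward direction is where your approach genuinely diverges from the paper's. The paper argues by contradiction via Rosenberg's Lemma: if $\tn{Br}_D(e)=0$ then $e\in\tn{Tr}_Q^D(k\db{G}^Q)$ for a single proper open $Q<D$, and pushing down to a well-chosen finite quotient $G/M$ (using Corollary \ref{cor:defgrpsG/N-blocksX_N}) contradicts the finite version of the result. You instead pass through the structural map $\rho:k\db{G}\to\tn{End}_k(T)$ for the simple module $T$ of vertex $D$ supplied by Proposition \ref{prop:simplemodvtxD}, and show directly that $\tn{id}_T$ survives in the Brauer quotient of $\tn{End}_k(T)$ via a Higman-criterion argument on an indecomposable summand $V$ of $T\res{D}$. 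Both routes ultimately rest on Proposition \ref{prop:simplemodvtxD} (the paper through Corollary \ref{cor:defgrpsG/N-blocksX_N}). Your argument is more self-contained in that it does not invoke the finite version of the proposition, at the cost of implicitly extending Definition \ref{def:Brquotient} to the finite-dimensional $G$-algebra $\tn{End}_k(T)$; the paper's route is shorter but imports the finite result as a black box. Two incidental remarks: the step ``$Q$ contains a $G$-conjugate of $D$ \dots\ matching supernatural orders forces $Q=D$'' is correct but unnecessary --- since $D$ is a vertex of $T$ and $Q\leqslant D$ with $T$ relatively $Q$-projective, minimality of the vertex gives $Q=D$ immediately; and the intermediate claim ``forces $DN=({}^gD'')N$'' in the backward direction is valid (via $|({}^gD'')N/N|=|D''N/N|\leqslant|DN/N|$) though only the containment $DN\leqslant({}^gD'')N$ is needed for the contradiction.
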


\begin{proof}
	Suppose first that $B$ has defect group $D$.  Then $e \in \tn{oTr}_{D}^{G}(k\db{G}^{D})$ by definition, so we need only check that $\tn{Br}_{D}(e)\neq 0$.  If $\tn{Br}_D(e) = 0$, then by Rosenberg's Lemma \ref{lem:Roslem}, $e\in \tn{Tr}_{Q}^{D}(k\db{G}^{Q})$ for some proper open subgroup $Q$ of $D$.  Fix an open normal subgroup $M$ of $G$ such that $X_M$ has defect group $DM/M$ (by Corollary \ref{cor:defgrpsG/N-blocksX_N}) and such that $QM/M$ has the same index in $DM/M$ as $Q$ has in $D$.  Then $\varphi_M(e)$ is an element of $\tn{Tr}_{QM/M}^{DM/M}(k[G/M]^{QM/M})$ so that $\tn{Br}_{DM/M}(\varphi_M(e)) = 0$.  But if $f$ is the block idempotent of $X_M$ then $\varphi_M(e)f = f$, and hence $\tn{Br}_{DM/M}(f) = 0$ since $\tn{Br}_{DM/M}$ is an algebra homomorphism.  This contradicts the finite version of the result \cite[Lemma 6.2.4]{B}, since $X_M$ has defect group $DM/M$.
	
	Suppose now that $e\in \tn{oTr}_{D}^{G}(k\db{G}^{D})$ and $\tn{Br}_{D}(e)\neq 0$.  Then certainly $B$ has a defect group $R$ contained in $D$.  
	Note that $R$ is open in $D$ by Proposition \ref{prop:vtxopeninP}, so we can find an open normal subgroup $M$ of $G$ such that $D\cap RM = R$.  We have $e = \tn{Tr}_{RM}^G(a)$ for some $a\in k\db{G}^{RM}$ by hypothesis.  Applying Mackey's formula (Lemma \ref{lem:Tracemapproperties}),
	$$e = \sum_{g\in D\backslash G/RM} \tn{Tr}_{D\cap {}^g\!RM}^{D}({}^g\!a).$$
	We claim that if $R$ is a proper subgroup of $D$, then so is each of the $D\cap {}^g\!RM$: if for some $g$ we had $D\cap {}^g\!RM = D  $, then $R < D \leqslant {}^g\!RM$, which implies that $RM \leqslant {}^g\!RM$ and hence that $g$ is in the normalizer $\tn{N}_G(RM)$.  But then $D\cap {}^g\!RM = D\cap RM = R<D$, a contradiction.  It follows that if $R$ is proper in $D$, then $e$ is in the kernel of $\tn{Br}_D$, contrary to our hypothesis.  Hence $R=D$ and we are done.
\end{proof}

For the convenience of the reader we collect together the characterizations of a defect group proved above, and provide one further characterization that will help us to prove Brauer's first main theorem.  We need one more simple lemma.

\begin{lem}\label{lem:conjugatedsubgrpsviaBr_D}
	Let $R,D$ be closed pro-$p$ subgroups of $G$. If, for each $N\trianglelefteq_{O}G$, $\tn{Br}_{D}(\tn{Tr}_{RN}^{G}(a))\neq0$ for some $a\in k\db{G}^{RN}$, then $D$ is conjugate to a subgroup of $R$.
\end{lem}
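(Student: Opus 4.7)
The plan is to use Mackey's formula to reduce, for each open normal subgroup $N$ of $G$, to a finite statement, and then assemble the resulting conjugating elements via the compactness of $G$.

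First I fix $N \trianglelefteq_O G$ and apply Mackey's formula (Lemma \ref{lem:Tracemapproperties}) with $H = D$ and $L = RN$, which is open in $G$ since $N$ is. This yields
$$\tn{Tr}_{RN}^G(a) = \sum_{g \in D\backslash G/RN} \tn{Tr}_{D \cap {}^g\!(RN)}^D({}^g\!a),$$
a finite sum since $RN$ is an open subgroup of the compact group $G$. For every $g$ with $D \cap {}^g\!(RN) \lneqq D$, the corresponding term lies in $\tn{Tr}_Q^D(k\db{G}^Q)$ for the proper open subgroup $Q := D \cap {}^g\!(RN)$ of $D$, and is thus annihilated by $\tn{Br}_D$. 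The hypothesis therefore guarantees the existence of $g_N \in G$ with $D \leq {}^{g_N}\!(RN)$, which by normality of $N$ can be rewritten as $D \leq {}^{g_N}\!R \cdot N$.

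Next I define
$$C_N := \{g \in G : D \leq {}^g\!R \cdot N\}.$$
I claim ${}^g\!R \cdot N$ depends only on the coset $gN$: for $n \in N$, using that $RN$ is a subgroup containing $n$ and that $N$ is normal in $G$,
$${}^{gn}\!R \cdot N = gnRn^{-1}g^{-1}N = gnRNg^{-1} = g(RN)g^{-1} = {}^g\!R \cdot N.$$
Hence $C_N$ is a union of cosets of the open subgroup $N$, and since $G/N$ is finite, $C_N$ is clopen. By the previous paragraph $g_N \in C_N$, and clearly $C_N \subseteq C_{N'}$ whenever $N \leq N'$, so the family $\{C_N : N \trianglelefteq_O G\}$ consists of non-empty closed subsets of the compact space $G$ with the finite intersection property.

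By compactness, the intersection $\bigcap_N C_N$ is non-empty; choose any $g$ in it. Then $D \leq \bigcap_N {}^g\!R \cdot N = {}^g\!R$, the last equality since the sets ${}^g\!R \cdot N$ for $N \trianglelefteq_O G$ form a neighborhood basis of the closed subgroup ${}^g\!R$ in $G$. Equivalently, $g^{-1}Dg \leq R$, so $D$ is conjugate to a subgroup of $R$, as required. The only subtlety is the coset-invariance of ${}^g\!R \cdot N$ underlying the closedness of $C_N$; once that is in hand, compactness does the rest.
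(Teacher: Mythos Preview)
Your proof is correct and follows essentially the same approach as the paper: apply Mackey's formula to see that some double coset representative $g$ must satisfy $D\leqslant {}^g\!RN$, then intersect the resulting non-empty closed sets $C_N$ by compactness. You have simply filled in more detail than the paper does (the coset-invariance argument for closedness of $C_N$ and the identification $\bigcap_N {}^g\!RN = {}^g\!R$), which the paper leaves as a ``standard compactness argument''.
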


\begin{proof}
	Fix $N\trianglelefteq_{O}G$ and $a$ such that $\tn{Br}_{D}(\tn{Tr}_{RN}^{G}(a))\neq 0$.  By the Mackey formula,
	$$0\neq \tn{Br}_D(\tn{Tr}_{RN}^G(a)) = \sum_{g\in D\backslash G/RM}  \tn{Br}_D(\tn{Tr}_{D\cap {}^g\!RN}^D({}^g\!a)),$$
	and hence the closed set $C_N$ of $g\in G$ with $D\subseteq {}^g\!RN$ is non-empty.  A standard compactness argument yields that $C = \bigcap_{N\trianglelefteq_{O}G}C_N$ is non-empty, and $g\in C$ has the property that $D\subseteq {}^g\!R$.
\end{proof}

\begin{teo}\label{theorem:CompleteDefectGroupChar}
	Let $G$ be a profinite group and $B$ a block of $G$ with block idempotent $e$.  The following are equivalent for a closed subgroup $D$ of $G$:
	\begin{enumerate}
		\item $e\in \tn{oTr}_D^G(k\db{G}^D)$ and $D$ is minimal with this property,
		\item $|G:D|_p<\infty$, $e\in \widetilde{\tn{Tr}}_D^G(k\db{G}^D)$ and $D$ is minimal among the closed subgroups of $G$ having these two properties,
		\item $\delta(D)$ is a vertex of $B$ as a $k\db{G\times G}$-module,
		\item $e\in \tn{oTr}_D^G(k\db{G}^D)$ and $\tn{Br}_D(e)\neq 0$.
		
		\item $\tn{Br}_D(e)\neq 0$ and $D$ is maximal among the pro-$p$ subgroups of $G$ with this property.
	\end{enumerate}
\end{teo}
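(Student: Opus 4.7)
The equivalences (1)$\Leftrightarrow$(2), (1)$\Leftrightarrow$(3) and (1)$\Leftrightarrow$(4) are already established in the excerpt as Propositions \ref{prop:blockidempininvlimoftraces}, \ref{prop:equivdefgrp} and \ref{prop:defgrpcharbyBr_D} respectively. Therefore my plan reduces to proving (1)$\Leftrightarrow$(5), which I would do in two halves, both driven by Lemma \ref{lem:conjugatedsubgrpsviaBr_D}.

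For (5)$\Rightarrow$(1), I would fix any defect group $D_0$ of $B$ (which exists and is pro-$p$ by Theorem \ref{teo:defgrpG-algfin} and Proposition \ref{prop:defectgrpareconjclassofpro-pgrps}) and use condition (1) for $D_0$ to write $e=\tn{Tr}_{D_0N}^{G}(a_N)$ with $a_N\in k\db{G}^{D_0N}$ for each $N\trianglelefteq_{O}G$. Since $\tn{Br}_D(e)\neq 0$, the hypothesis of Lemma \ref{lem:conjugatedsubgrpsviaBr_D} is met for every $N$, producing $g\in G$ with $gDg^{-1}\leqslant D_0$, i.e.\ $D\leqslant g^{-1}D_0g$. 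The conjugate $g^{-1}D_0g$ is again a defect group (Proposition \ref{prop:defectgrpareconjclassofpro-pgrps}), hence pro-$p$ with $\tn{Br}_{g^{-1}D_0g}(e)\neq 0$ by Proposition \ref{prop:defgrpcharbyBr_D}, so the maximality clause of (5) forces $D=g^{-1}D_0g$, giving (1). Conversely, for (4)$\Rightarrow$(5), $D$ is already a defect group with $\tn{Br}_D(e)\neq 0$; given any pro-$p$ closed subgroup $D'\supseteq D$ with $\tn{Br}_{D'}(e)\neq 0$, I would write $e=\tn{Tr}_{DN}^{G}(a_N)$ and apply Lemma \ref{lem:conjugatedsubgrpsviaBr_D} in the opposite sense (with its $R$ equal to our $D$ and its $D$ equal to $D'$) to obtain $g\in G$ with $gD'g^{-1}\leqslant D$. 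Conjugating the inclusion $D\leqslant D'$ then yields $gDg^{-1}\leqslant gD'g^{-1}\leqslant D$.

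The step I expect to be the main subtlety, and which I would isolate as a short separate lemma, is the implication $gDg^{-1}\leqslant D\Rightarrow gDg^{-1}=D$ for a closed subgroup of a profinite group. For finite groups this is trivial by cardinality, but here one must argue via finite quotients: for any $N\trianglelefteq_{O}G$ the subgroups $gDg^{-1}N/N$ and $DN/N$ are conjugate in the finite group $G/N$, so have equal finite order, and since one contains the other they coincide; thus $gDg^{-1}N=DN$ for every $N$, and intersecting over $N$ gives $gDg^{-1}=D$ because both are closed in $G$. Substituting back into $gD'g^{-1}\leqslant D=gDg^{-1}$ and conjugating yields $D'\leqslant D$, whence $D=D'$, establishing the maximality required by (5) and completing the proof.
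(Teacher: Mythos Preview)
Your proposal is correct and follows essentially the same route as the paper: the equivalence of (1)--(4) is cited from the earlier propositions, and the remaining implication is handled in both directions via Lemma \ref{lem:conjugatedsubgrpsviaBr_D}, arriving at a sandwich $D\subseteq R\subseteq {}^g\!D$ (equivalently your $gDg^{-1}\leqslant gD'g^{-1}\leqslant D$). The only noteworthy difference is that the paper simply asserts ``hence $R=D$'' at this point, whereas you correctly isolate and prove the profinite fact that ${}^g\!D\leqslant D$ forces ${}^g\!D=D$ via finite quotients; your extra care there is warranted, since this step is not entirely trivial for infinite groups.
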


\begin{proof}
	The equivalence of $1,2,3,4$ is the content of Propositions \ref{prop:equivdefgrp}, \ref{prop:defgrpcharbyBr_D} and \ref{prop:blockidempininvlimoftraces}.  To conclude the result, we will check that $4$ and $5$ are equivalent.
	
	Suppose that $D$ satisfies $4$ and let $R$ be a pro-$p$ subgroup of $G$ containing $D$ such that $\tn{Br}_R(e)\neq 0$.  Lemma \ref{lem:conjugatedsubgrpsviaBr_D} (applied with the roles of $R$ and $D$ interchanged) implies that $D\subseteq R\subseteq {}^g\!D$ for some $g\in G$ and hence $R=D$.  
	
	Suppose now that $D$ satisfies $5$.  Let $R$ be a defect group of $B$, so that $e\in \tn{oTr}_R^G(k\db{G}^R)$ and $\tn{Br}_R(e)\neq 0$.  By Lemma \ref{lem:conjugatedsubgrpsviaBr_D} (with $R$ and $D$ the right way round this time), $D$ is conjugate to a subgroup of $R$, and hence by Proposition \ref{prop:defectgrpareconjclassofpro-pgrps} we may as well choose $R$ containing $D$.  But $D$ is by hypothesis maximal with $\tn{Br}_D(e)\neq 0$ and hence $D=R$, as required.
\end{proof}

\section{Brauer's first main theorem for profinite groups}\label{SectionBrauerCorr}

Brauer's first main theorem for finite groups (cf. \cite[Theorem 6.2.6]{B}) is a natural bijection between the blocks of a finite group $G$ with defect group $D$, and the blocks of the normalizer of $D$ in $G$ with defect group $D$.  In this section we use the results thus obtained to demonstrate our main theorem, a version of Brauer's first main theorem for profinite groups.  We follow the approaches of \cite[\S 6.2]{B} and \cite[\S 12.6]{Web}.  Given the results already obtained, the proof is mostly similar to the finite case, with the exception of  Lemma \ref{lem:Brdiagramcommutes}.  Remark \ref{Remark:BrauerCorrFails} provides a simple example that illustrates a certain subtlety of the correspondence.

\begin{lem}\label{lem:centralizerisomorphictok[[G]]^[D]}
	Let $D$ be a closed pro-$p$ subgroup of $G$.  The composition $$k\db{\tn{C}_G(D)}\to k\db{G}^{D}\to k\db{G}^{[D]},$$ 
	where the first map is induced from the inclusion of the centralizer of $D$ in $G$ into $k\db{G}^{D}$ and the second is the Brauer homomorphism, is an isomorphism of $\tn{N}_G(D)$-algebras.
\end{lem}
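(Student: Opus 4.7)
The plan is to reduce to the classical finite-group Brauer correspondence via inverse limits. First I would verify that the composition is a continuous $\tn{N}_G(D)$-equivariant algebra homomorphism; this is immediate since $\tn{C}_G(D)$ centralizes $D$ under conjugation, and $\tn{N}_G(D)$ preserves both $\tn{C}_G(D)$ and each trace subspace $\tn{Tr}_Q^D(k\db{G}^Q)$ by conjugation.

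The structural step combines $k\db{G}^D = \varprojlim_N k[G/N]^{DN/N}$ (over $N \trianglelefteq_{O} G$) with the classical finite-group result: for each such $N$, the decomposition $k[G/N]^{DN/N} = k\tn{C}_{G/N}(DN/N) \oplus B_N$ is $\tn{N}_{G/N}(DN/N)$-equivariant, where $B_N = \sum_{\bar Q \lneqq DN/N} \tn{Tr}_{\bar Q}^{DN/N}(k[G/N]^{\bar Q}) = \ker(\tn{Br}_{DN/N})$ is spanned by non-singleton $DN/N$-orbit sums. Taking inverse limits yields a topological direct sum decomposition $k\db{G}^D = \varprojlim_N k\tn{C}_{G/N}(DN/N) \oplus \varprojlim_N B_N$ of closed $\tn{N}_G(D)$-submodules.

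I would then identify $\varprojlim_N k\tn{C}_{G/N}(DN/N)$ with $k\db{\tn{C}_G(D)}$ inside $k\db{G}$. The nontrivial inclusion uses a finite-intersection compactness argument: if a compatible sequence had a nonzero coefficient on a coset $gN \in \tn{C}_{G/N}(DN/N) \setminus \tn{C}_G(D)N/N$, then by the compatibility constraint at each open normal $M \leqslant N$ the closed sets $T_M = \{n \in N : [gn, D] \subseteq M\}$ would be non-empty, so by compactness of $N$ their intersection would furnish an element $n$ with $gn \in \tn{C}_G(D) \cap gN$, contradicting the assumption on $gN$.

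The main obstacle is identifying $\varprojlim_N B_N$ with the closed ideal $\overline{B} = \overline{\sum_{Q \lneqq_{O} D} \tn{Tr}_Q^D(k\db{G}^Q)}$. The inclusion $\overline{B} \subseteq \varprojlim_N B_N$ is a direct verification using that $\varphi_N(\tn{Tr}_Q^D(a))$ equals either $\tn{Tr}_{QN/N}^{DN/N}(\varphi_N(a)) \in B_N$ or $0$ (by the characteristic-$p$ vanishing of the index factor when $QN = DN$), together with closedness of $\varprojlim_N B_N$. The reverse inclusion amounts to the density of $B$ in $\varprojlim_N B_N$, which reduces to the surjectivity of $\varphi_N : B \to B_N$ for cofinal $N$: each generating non-singleton orbit sum $\sigma = \tn{Tr}_{\bar S}^{DN/N}(\bar g) \in B_N$ must be lifted by $\tn{Tr}_S^D(\tilde a) \in B$, where $S = \{d \in D : dN \in \bar S\}$ is a proper open subgroup of $D$ containing $D \cap N$ and $\tilde a \in k\db{G}^S$ satisfies $\varphi_N(\tilde a) = \bar g$ (up to $\ker \tn{Tr}_{\bar S}^{DN/N}$). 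Constructing $\tilde a$ is the delicate step, since the naive lift of $\bar g$ to $G$ need not be $S$-fixed; I anticipate exploiting the pro-$p$ structure of $S$, the fact that $\tn{char}(k) = p$, and a Mittag-Leffler-type argument on finite quotients to produce the required lift. Once this density is established, $k\db{G}^D = k\db{\tn{C}_G(D)} \oplus \overline{B}$ and the quotient map gives the desired isomorphism of $\tn{N}_G(D)$-algebras.
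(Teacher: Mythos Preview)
The paper does not actually prove this lemma: it simply cites \cite[Lemma 3.4]{P2}. So there is no in-paper argument to compare against; you are effectively proposing a proof from scratch of a result the authors import wholesale.

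Your strategy of reducing to the finite case through the inverse system $k\db{G}^{D}=\varprojlim_{N} k[G/N]^{DN/N}$ is natural, and the steps you spell out for injectivity and for the identification $\varprojlim_{N} k\,\tn{C}_{G/N}(DN/N)=k\db{\tn{C}_{G}(D)}$ are correct; your compactness argument for the latter is exactly right. The easy inclusion $\overline{B}\subseteq \varprojlim_{N} B_{N}$ is also fine.

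The problem is precisely where you flag it: the ``delicate step'' establishing surjectivity of $\varphi_{N}\colon B\to B_{N}$ (equivalently, density of $B$ in $\varprojlim_{N} B_{N}$). Your plan reduces this to producing, for each non-singleton orbit sum with stabilizer $\bar S$, an $S$-fixed element $\tilde a\in k\db{G}^{S}$ with $\varphi_{N}(\tilde a)=\bar g$. But the map $k\db{G}^{S}\to k[G/N]^{SN/N}$ need not be surjective: analyzing it level by level, the $\bar S$-fixed coset $gN$ lifts to an $S$-fixed element at level $M\leqslant N$ precisely when the fiber $\{gnM:n\in N\}$ has an $S$-fixed point, and passing to the limit forces $gN\cap \tn{C}_{G}(S)\neq\varnothing$. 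Nothing in your hypotheses guarantees this, and the slogans ``pro-$p$ structure, characteristic $p$, Mittag--Leffler'' do not by themselves produce such a fixed point; indeed, for an arbitrary surjection of permutation modules for a $p$-group in characteristic $p$, fixed points do not surject. Some genuinely extra input (for instance a Frattini/Sylow argument inside $SN$ to replace $g$ by an element of $gN\cap \tn{N}_{G}(S)$ and then iterate, or a direct comparison with the stable images $\bigcap_{M\leqslant N}\varphi_{NM}(B_{M})$) is needed here, and you should either supply it explicitly or, as the paper does, invoke the result from \cite{P2}. Without that, the surjectivity half of the isomorphism remains unproven.
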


\begin{proof}
	This is \cite[Lemma 3.4]{P2}.
\end{proof}

It will be convenient for us to consider $\tn{Br}_D$ as taking values in $k\db{C_G(D)}$.  Thus we will regard $\tn{Br}_D$ as the map
$$k\db{G}^{D}\to k\db{G}^{[D]}\xrightarrow{\sim} k\db{\tn{C}_{G}(D)},$$
where the second arrow is the inverse of the isomorphism of Lemma \ref{lem:centralizerisomorphictok[[G]]^[D]}.  

Lemma \ref{lem:Brdiagramcommutes} below corresponds to \cite[Lemma 6.2.5]{B}. It is the principal reason we introduce the modified trace function in Section \ref{sectionTraceMaps} and its proof is rather delicate. It seems to be a fundamental obstruction that the lemma cannot be formulated without the hypothesis that $D$ is open in $P$ -- this is reflected in the failure of the correspondence when $D$ has infinite index in $P$ (see Remark \ref{Remark:BrauerCorrFails}).  The following auxiliary lemma is not strictly necessary for the proof of Lemma \ref{lem:Brdiagramcommutes}, but without it the argument may look rather suspicious.

\begin{lem}\label{lemma index independent of N}
	Let $D$ be a pro-$p$ subgroup of $G$ that is open in a $p$-Sylow subgroup of $G$ that contains it. Let $\mathcal{N}$ denote the set of open normal subgroups of $G$ for which $D$ is a $p$-Sylow subgroup of $DN$.  The element $|G:\tn{N}_G(DN)|_{p'}$ of $k$ does not depend on $N\in \mathcal{N}$.
\end{lem}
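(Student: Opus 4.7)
The plan is to reduce the statement to the following lemma in the finite group $G/N$: for $N\leqslant M$ both in $\mathcal{N}$, the index $|\tn{N}_G(DM):\tn{N}_G(DN)|$ counts the $p$-Sylow subgroups of the finite group $DM/N$, and hence is $\equiv 1\pmod{p}$ by Sylow's theorem.

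First I would observe that $\mathcal{N}$ is closed under finite intersections: if $N,M\in\mathcal{N}$, then $D\subseteq D(N\cap M)\subseteq DN$, and since $D$ is a $p$-Sylow of $DN$ it is a $p$-Sylow of every intermediate closed subgroup, in particular of $D(N\cap M)$. This shows $\mathcal{N}$ is downward directed, so it suffices to compare the value of $|G:\tn{N}_G(DN)|_{p'}$ for pairs $N\leqslant M$ in $\mathcal{N}$.

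Fix such a pair. A short calculation shows $\tn{N}_G(DN)\subseteq\tn{N}_G(DM)$: if $gDNg^{-1}=DN$ then, using normality of $N$ and $M$ in $G$, we have $gDg^{-1}\cdot N=DN$, so $gDg^{-1}\subseteq DN\subseteq DM$ and thus $gDMg^{-1}=gDg^{-1}\cdot M\subseteq DM$; the reverse inclusion follows analogously from $N\subseteq M$. Pass now to the finite quotient $\Gamma=G/N$, writing $\bar P = DN/N$ and $\bar M = DM/N$. Since $|DM:DN|_p=1$ (both $|DM:D|$ and $|DN:D|$ are coprime to $p$), $\bar P$ is a $p$-Sylow subgroup of $\bar M$. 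One checks easily that $\tn{N}_\Gamma(\bar P)=\tn{N}_G(DN)/N$ and $\tn{N}_\Gamma(\bar M)=\tn{N}_G(DM)/N$, using that $N$ is contained in both $DN$ and $DM$.

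Now consider the conjugation action of $\tn{N}_\Gamma(\bar M)$ on the set of $p$-Sylow subgroups of $\bar M$. The stabilizer of $\bar P$ is $\tn{N}_\Gamma(\bar M)\cap \tn{N}_\Gamma(\bar P)=\tn{N}_\Gamma(\bar P)$, so the orbit has size $|\tn{N}_G(DM):\tn{N}_G(DN)|$; and since Sylow's theorem in $\bar M$ already produces all $p$-Sylows as $\bar M$-conjugates of $\bar P$, and $\bar M\subseteq \tn{N}_\Gamma(\bar M)$, the orbit exhausts the set of $p$-Sylows of $\bar M$. Hence $|\tn{N}_G(DM):\tn{N}_G(DN)|$ equals the number of $p$-Sylows of $\bar M$, which is $\equiv 1\pmod p$. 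In particular this index is coprime to $p$ and equals $1$ in $k$, so from the multiplicativity
$$|G:\tn{N}_G(DN)|_{p'}=|G:\tn{N}_G(DM)|_{p'}\cdot|\tn{N}_G(DM):\tn{N}_G(DN)|_{p'}$$
we obtain equality in $k$. The main obstacle is the bookkeeping needed to identify $|\tn{N}_G(DM):\tn{N}_G(DN)|$ with a count of Sylow subgroups in $DM/N$; once the normalizers are transported into $G/N$ the conclusion is immediate from Sylow's theorem.
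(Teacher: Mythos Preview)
Your proof is correct and follows essentially the same approach as the paper: both reduce to comparing $N\leqslant M$ in $\mathcal{N}$, pass to the finite quotient $G/N$, and use Sylow's third theorem to show that the ratio $|\tn{N}_G(DM):\tn{N}_G(DN)|$ equals the number of $p$-Sylow subgroups of $DM/N$ and hence is $1$ in $k$. The only cosmetic difference is that the paper packages the count via the fibres of the surjection ${}^G\!D\to {}^{G/N}\!(DN/N)$ rather than your direct orbit--stabilizer argument on $\tn{Syl}_p(DM/N)$; your version has the small expository bonus of making the directedness of $\mathcal{N}$ explicit.
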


\begin{proof}
	It is sufficient to prove that if $G$ is a finite group with $p$-subgroup $D$ and $N$ is a normal subgroup such that $D$ is a Sylow $p$-subgroup of $DN$, then $|G:\tn{N}_G(D)|_{p'} = |G/N:\tn{N}_{G/N}(DN/N)|_{p'}$ in $k$.  By the orbit-stabilizer theorem, $|G:\tn{N}_G(D)|_{p'}$ is the $p'$-part of the size of the set ${}^G\!D$ of $G$-conjugates of $D$ in $G$, while $|G/N:\tn{N}_{G/N}(DN/N)|_{p'}$ is the $p'$-part of the size of the set ${}^{G/N}\!DN/N$ of $G/N$-conjugates of $DN/N$ in $G/N$.  Consider the surjective map $\rho:{}^G\!D\to {}^{G/N}\!DN/N$ given by ${}^g\!D\mapsto {}^{gN}DN/N$.  By our hypothesis on $N$, the inverse image under $\rho$ of an element ${}^{gN}DN/N$ is precisely the set $\tn{Syl}_p({}^g\!DN)$ of Sylow $p$-subgroups of ${}^g\!DN$.  The size of this set does not depend on $g$ and by Sylow's third theorem, $|\tn{Syl}_p({}^g\!DN)| = 1$ in $k$.  Hence
	$$|{}^G\!D|_{p'} = |{}^{G/N}\!DN/N|_{p'}\cdot|\tn{Syl}_p(DN)|_{p'} = |{}^{G/N}\!DN/N|_{p'},$$
	as required.
\end{proof}


\begin{lem}\label{lem:Brdiagramcommutes}
	Let $D$ be a pro-$p$ subgroup of $G$ that is open in a $p$-Sylow subgroup of $G$ that contains it.  Let $N$ be an open normal subgroup of $G$ such that $D$ is a $p$-Sylow subgroup of $DN$, and denote by $b$ the element $1 / |G:\tn{N}_G(DN)|_{p'}$ of $k$.
	The diagram
	$$\xymatrix{
		k\db{G}^D \ar[rr]^-{\tn{Br}_D}\ar[d]_{\widetilde{\tn{Tr}}_D^G} && k\db{\tn{C}_G(D)} \ar[d]^{b\cdot\widetilde{\tn{Tr}}_D^{\tn{N}_G(D)}}  \\
		\widetilde{\tn{Tr}}_D^G(k\db{G}^D)\ar[rr]_-{\tn{Br}_D} && k\db{\tn{C}_G(D)}
	}$$
	commutes.  In particular the image of the lower map is precisely $\widetilde{\tn{Tr}}_D^{\tn{N}_G(D)}(k\db{\tn{C}_G(D)})$.
\end{lem}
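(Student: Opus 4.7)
The plan is to verify the claimed identity
$$\tn{Br}_D\widetilde{\tn{Tr}}_D^G(x) = b\cdot\widetilde{\tn{Tr}}_D^{\tn{N}_G(D)}\tn{Br}_D(x)$$
coordinate-wise in $k\db{G} = \varprojlim_N k[G/N]$, with $N$ ranging over the cofinal set $\mathcal{N}$ of open normal subgroups for which $D$ is a $p$-Sylow subgroup of $DN$. The finite-group version of this identity, namely
$$\tn{Br}_E\widetilde{\tn{Tr}}_E^H = \tfrac{1}{|H:\tn{N}_H(E)|_{p'}}\widetilde{\tn{Tr}}_E^{\tn{N}_H(E)}\tn{Br}_E$$
for a $p$-subgroup $E$ of a finite group $H$, is the classical \cite[Lemma 6.2.5]{B}: applying Mackey's formula to $\tn{Tr}_E^H(x)$, the summands $\tn{Tr}_{E\cap {}^g\!E}^E({}^g\!x)$ with $E\cap {}^g\!E$ a proper subgroup of $E$ are killed by $\tn{Br}_E$, and the surviving terms (indexed by $\tn{N}_H(E)/E$, since $E$ is normal in its normalizer) combine to $\tn{Tr}_E^{\tn{N}_H(E)}\tn{Br}_E(x)$ using the $\tn{N}_H(E)$-equivariance of $\tn{Br}_E$; comparing $p'$-normalizations yields the displayed scalar.

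I would then check that the profinite Brauer homomorphism is compatible with the quotient maps: for each $N\in \mathcal{N}$ one has $\varphi_N \tn{Br}_D = \tn{Br}_{DN/N}\varphi_N$ on $k\db{G}^D$. By continuity and linearity, this reduces to showing that for a proper open subgroup $Q$ of $D$ and any $y\in k\db{G}^Q$, the element $\varphi_N \tn{Tr}_Q^D(y) = [D\cap QN:Q]\cdot\tn{Tr}_{QN/N}^{DN/N}\varphi_N(y)$ is killed by $\tn{Br}_{DN/N}$. If $QN\lneqq DN$ this holds by definition of $\tn{Br}_{DN/N}$, and if $QN = DN$ the expression simplifies to $[D:Q]\varphi_N(y)$, which vanishes in characteristic $p$ because $[D:Q]$ is a nontrivial power of $p$.

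The main obstacle is reconciling the normalizer $\tn{N}_{G/N}(DN/N)$ appearing in the finite analog with the normalizer $\tn{N}_G(D)N/N$ appearing on the right-hand side of the profinite identity via Definition \ref{def:modifiedtrace}. The hypothesis that $D$ is a $p$-Sylow of $DN$ is exactly what is needed here: a Frattini-style argument shows that if $g\in \tn{N}_G(DN)$, then $gDg^{-1}$ and $D$ are both $p$-Sylow subgroups of $DN$, so by Sylow's theorem applied in the profinite group $DN$ there exists $h\in DN$ with $gDg^{-1} = hDh^{-1}$; hence $g\in (DN)\tn{N}_G(D) = N\tn{N}_G(D)$, and consequently $\tn{N}_G(DN) = \tn{N}_G(D)N$, which gives $\tn{N}_{G/N}(DN/N) = \tn{N}_G(D)N/N$. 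Combined with Lemma \ref{lemma index independent of N}, this also shows that the constant $b_N = 1/|G/N:\tn{N}_{G/N}(DN/N)|_{p'}$ produced by the finite identity is equal to $b$ for every $N\in \mathcal{N}$.

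To assemble the argument, I apply $\varphi_N$ to both sides of the desired identity. The Brauer-quotient compatibility and the finite analog transform the left-hand side into $b\cdot\widetilde{\tn{Tr}}_{DN/N}^{\tn{N}_{G/N}(DN/N)}\tn{Br}_{DN/N}\varphi_N(x)$, while Lemma \ref{Lemma tildeTr properties}(1) and the same compatibility transform the right-hand side into $b\cdot\widetilde{\tn{Tr}}_{DN/N}^{\tn{N}_G(D)N/N}\tn{Br}_{DN/N}\varphi_N(x)$. The two expressions agree by the normalizer identification of the previous paragraph, and since $\mathcal{N}$ is cofinal the original identity holds in $k\db{G}$. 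The ``in particular'' statement follows immediately: $\tn{Br}_D$ surjects $k\db{G}^D$ onto $k\db{\tn{C}_G(D)}$ and $b$ is a nonzero scalar, so the image of the lower map is $b\cdot\widetilde{\tn{Tr}}_D^{\tn{N}_G(D)}(k\db{\tn{C}_G(D)}) = \widetilde{\tn{Tr}}_D^{\tn{N}_G(D)}(k\db{\tn{C}_G(D)})$.
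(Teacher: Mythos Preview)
Your proof is correct and in fact streamlines the paper's argument at one key step. Both approaches pass to the finite quotients $G/N$ and establish the compatibility of the Brauer homomorphism with the projections (the paper phrases this as $\varphi_{MN}\tn{Br}_{DN/N} = \tn{Br}_{DM/M}\varphi_{MN}$; your check of $\varphi_N\tn{Br}_D = \tn{Br}_{DN/N}\varphi_N$ is essentially the same computation, and your treatment of the case $QN=DN$ via the factor $|D:Q|$ is exactly the paper's). The divergence lies in how the two normalizers are reconciled. The paper never records the equality $\tn{N}_G(DN) = \tn{N}_G(D)N$; it uses only the weaker fact $\bigcap_{N\in\mathcal{N}}\tn{N}_G(DN) = \tn{N}_G(D)$, then for each fixed $M\in\mathcal{N}$ selects some $N\leqslant M$ with $\tn{N}_G(DN)M = \tn{N}_G(D)M$ and performs a two-level calculation tracking both $\varphi_N$ and $\varphi_M$ (the step marked $(**)$ in the paper) to extract $\varphi_M(s) = b\cdot\varphi_M(t)$. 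Your Frattini argument short-circuits this entirely: since $D$ is a $p$-Sylow subgroup of the normal subgroup $DN$ of $\tn{N}_G(DN)$, Sylow conjugacy gives $\tn{N}_G(DN) = DN\cdot\tn{N}_G(D) = \tn{N}_G(D)N$ for \emph{every} $N\in\mathcal{N}$, so the finite-group identity from \cite[Lemma 6.2.5]{B} already matches the desired right-hand side coordinate by coordinate, and the assembly is immediate. Your route is shorter and more conceptual; the paper's route trades the Frattini step for a longer explicit computation.
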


\begin{proof}
	We work within a cofinal set $\mathcal{N}$ of open normal subgroups $N$ of $G$ such that $$|\tn{N}_G(D)N:DN|_p = |\tn{N}_G(D):D|_p\,\hbox{ and }\,|G:\tn{N}_G(D)N|_p = |G:\tn{N}_G(D)|_p$$ so that Lemma \ref{Lemma tildeTr properties} holds when required.  These conditions imply that $D$ is a $p$-Sylow subgroup of $DN$ and hence by Lemma \ref{lemma index independent of N}, the value $1/|G:\tn{N}_G(DN)|_{p'}$ is equal to $b$ for every $N\in \mathcal{N}$. 
	
	We first claim that the following diagram is well-defined and commutes:
	
	$$\xymatrix{
		k[G/N]^{DN/N} \ar[r]\ar@/^1.5pc/[rr]^{\tn{Br}_{DN/N}}\ar[d]_{\varphi_{MN}} & k[G/N]^{DN/N}/\sum_{QN<DN}\tn{Tr}_{QN/N}^{DN/N}(k[G/N]^{DN/N})\ar[r]\ar[d]^{\varphi_{MN}} & k[\tn{C}_{G/N}(DN/N)]\ar[d]^{\varphi_{MN}}  \\
		k[G/M]^{DM/M} \ar[r]\ar@/_1.5pc/[rr]_{\tn{Br}_{DM/M}} & k[G/M]^{DM/M}/\sum_{QM<DM}\tn{Tr}_{QM/M}^{DM/M}(k[G/M]^{DM/M})\ar[r] & k[\tn{C}_{G/M}(DM/M)]
	}$$
	wherein the symbol $\varphi_{MN}$ is abusively being used to indicate the map induced by $\varphi_{MN}:k[G/N]\to k[G/M]$ on the corresponding subquotients.  The outer triangles commute by definition, and as long as the middle vertical map is well-defined, then the inner squares also commute by definition.  So the only thing that needs to be checked is that the middle vertical map is well defined, and for this we must see what happens to an element $\tn{Tr}_{QN/N}^{DN/N}(a)$ when $QN$ is proper in $DN$, but $QM=DM$.  But in this case
	$$\varphi_{MN}(\tn{Tr}_{QN/N}^{DN/N}(a)) = \sum_{dN\in DN/QN}{}^{dM}\varphi_{MN}(a) = |DN:QN|\varphi_{MN}(a) = 0$$
	because $\varphi_{MN}(a)$ is a $DM/M$-fixed point and $|DN:QN| = 0$ in $k$.  The map is thus well-defined and the claim follows.  In particular, the outer shape commutes, and so $\varphi_{MN}\tn{Br}_{DN/N} = \tn{Br}_{DM/M}\varphi_{MN}$.
	
	Write $k\db{G}^D = k\db{\tn{C}_G(D)}\oplus \overline{\sum_{Q<_OD}\tn{Tr}_Q^D(k\db{G}^Q)}$. We first claim that the second summand is sent to $0$ when passed in either direction through the diagram of the statement of the lemma, so suppose that $x = \tn{Tr}_Q^D(a)$ for $Q$ a proper open subgroup of $D$ and $a\in k\db{G}^Q$.  Note that $\widetilde{\tn{Tr}}_D^G\tn{Tr}_Q^D(a) = \widetilde{\tn{Tr}}_Q^G(a)$, as can be seen by looking at the cofinal subset of those $N$ for which $QN\cap D = Q$.
	For such $N$ we have  
	$$\tn{Tr}_{QN/N}^{G/N}(\varphi_N(a)) = \sum_{g\in DN\backslash G/QN}\tn{Tr}_{({}^g\!QN\cap DN)/N}^{DN/N}({}^g\!\varphi_N(a)).$$
	The argument of Proposition \ref{prop:defgrpcharbyBr_D} shows that the subgroups $({}^g\!QN\cap DN)/N$ are proper in $DN/N$, and hence
	$$\tn{Tr}_{QN/N}^{G/N}(\varphi_N(a)) \in \tn{Ker}(\tn{Br}_{DN/N}).$$
	Thus $\tn{Br}_D\widetilde{\tn{Tr}}_D^G(x) = 0$.  But going the other way round the square, $b\cdot\widetilde{\tn{Tr}}_D^{\tn{N}_G(D)}\tn{Br}_D(x)$ is clearly $0$, so the claim follows.
	
	We from now on assume that $x\in k\db{\tn{C}_G(D)}$.  Denote by $s$ the element $\tn{Br}_D\widetilde{\tn{Tr}}_D^G(x)$ and by $t$ the element $\widetilde{\tn{Tr}}_D^{\tn{N}_G(D)}\tn{Br}_D(x) = \widetilde{\tn{Tr}}_D^{\tn{N}_G(D)}(x)$.  We will show that for each $M\in \mathcal{N}$, we have
	$$\varphi_M(s) = b\cdot\varphi_M(t),$$
	and hence that $s = b\cdot t$.  Since $b$ does not depend on $s$ or $t$, the lemma follows. 
	
	We first calculate $\varphi_N(s)$ for arbitrary $N\in \mathcal{N}$:
	\begin{align*}
		\varphi_N(s) & = \varphi_N\tn{Br}_D\widetilde{\tn{Tr}}_D^G(x) \\
		& = \tn{Br}_{DN/N}\varphi_N\widetilde{\tn{Tr}}_D^G(x) \\
		& = \frac{1}{|G:DN|_{p'}}\tn{Br}_{DN/N}\tn{Tr}_{DN/N}^{G/N}(\varphi_N(x)) \\
		& = \frac{1}{|G:DN|_{p'}}\tn{Br}_{DN/N}\bigg(\sum_{g\in DN\backslash G/DN}\tn{Tr}_{(DN\cap{}^g\!DN)/N}^{DN/N}({}^{gN}\varphi_N(x)) \bigg).
	\end{align*}
	Note that $DN\cap {}^g\!DN = (D\cap {}^g\!DN)N$ by Dedekind's modular law applied to $D$ and $N\leqslant {}^g\!DN$.  We claim that if $g\not\in \tn{N}_G(DN)$, then $\tn{Tr}_{(DN\cap {}^g\!DN)/N}^{DN/N}({}^{gN}\varphi_N(x))$ is sent to $0$ by $\tn{Br}_{DN/N}$: observe that
	$$D\subseteq {}^g\!DN \Longleftrightarrow g\in \tn{N}_G(DN)$$
	and hence for our $g\not\in \tn{N}_G(DN)$, the subgroup $Q = D\cap {}^g\!DN$ is proper in $D$.  Note further that 
	$$\left|\frac{DN}{QN}\right| = \left|\frac{DQN}{QN}\right| = \left|\frac{D}{D\cap (D\cap {}^g\!DN)N}\right| = \left|\frac{D}{D\cap {}^g\!DN}\right| = \left|\frac{D}{Q}\right|.$$
	Hence $\tn{Tr}_{(DN\cap {}^g\!DN)/N}^{DN/N}({}^{gN}\varphi_N(x)) = \tn{Tr}_{QN/N}^{DN/N}({}^{gN}\varphi_N(x))$ 
	is in the kernel of $\tn{Br}_{DN/N}$ as claimed, and so continuing the above calculation we have
	\begin{align*}
		\varphi_N(s)
		& = \frac{1}{|G:DN|_{p'}} \tn{Br}_{DN/N}\bigg(\sum_{g\in \tn{N}_G(DN)/DN}\tn{Tr}_{(DN\cap{}^g\!DN)/N}^{DN/N}({}^{gN}\varphi_N(x)) \bigg) \\
		& = \frac{1}{|G:DN|_{p'}} \tn{Br}_{DN/N}\tn{Tr}_{DN/N}^{\tn{N}_G(DN)/N}(\varphi_N(x)) \\
		& =   \frac{1}{|G:\tn{N}_G(DN)|_{p'}} \tn{Br}_{DN/N}\widetilde{\tn{Tr}}_{DN/N}^{\tn{N}_G(DN)/N}(\varphi_N(x)).\qquad\qquad\qquad(*)
	\end{align*}
	
	Meanwhile
	$$\varphi_N(t) = \widetilde{\tn{Tr}}_{DN/N}^{\tn{N}_G(D)N/N}(\varphi_N(x)).$$
	
	Fix $M\in \mathcal{N}$.  Since $M$ is open and $\bigcap_{N\in \mathcal{N}}\tn{N}_G(DN) = \tn{N}_G(D)$, there is $N\in \mathcal{N}$ contained in $M$ with the property that $\tn{N}_G(DN)M = \tn{N}_G(D)M$.  Hence
	$$\varphi_{MN}\left(\tn{Tr}_{\tn{N}_G(D)N/N}^{\tn{N}_G(DN)/N}(\varphi_N(t))\right)
	= \left|\frac{\tn{N}_G(DN)}{\tn{N}_G(D)N}\right|\varphi_M(t),$$
	or in other words,
	$$\varphi_{MN}\widetilde{\tn{Tr}}_{\tn{N}_G(D)N/N}^{\tn{N}_G(DN)/N}(\varphi_N(t))
	= \varphi_M(t).\qquad\qquad\qquad(**)$$
	We put all these calculations together to obtain the claimed relationship between $\varphi_M(s)$ and $\varphi_M(t)$:
	\begin{align*}
		\varphi_M(s) & = \varphi_{MN}\varphi_N(s) \\
		& = \frac{1}{|G:\tn{N}_G(DN)|_{p'}}\cdot \varphi_{MN}\tn{Br}_{DN/N}\widetilde{\tn{Tr}}_{DN/N}^{\tn{N}_G(DN)/N}(\varphi_N(x))&&\hbox{by }(*)\\
		& = \frac{1}{|G:\tn{N}_G(DN)|_{p'}}\cdot \varphi_{MN}\tn{Br}_{DN/N}\widetilde{\tn{Tr}}_{\tn{N}_G(D)N/N}^{\tn{N}_G(DN)/N}\widetilde{\tn{Tr}}_{DN/N}^{\tn{N}_G(D)N/N}(\varphi_N(x))&&\hbox{by Lemma }\ref{Lemma tildeTr properties}\\
		& =  \frac{1}{|G:\tn{N}_G(DN)|_{p'}}\cdot \tn{Br}_{D/M}\varphi_{MN}\widetilde{\tn{Tr}}_{\tn{N}_G(D)N/N}^{\tn{N}_G(DN)/N}(\varphi_N(t))\\
		& =  \frac{1}{|G:\tn{N}_G(DN)|_{p'}}\cdot \tn{Br}_{D/M}\varphi_{M}(t)&&\hbox{by }(**)\\
		& = b\cdot \varphi_M(t).
	\end{align*}
\end{proof}

\begin{lem}\label{lem:centralidemincentralizer}
	Let $D$ be a closed normal pro-$p$ subgroup of $G$.  The central idempotents of $k\db{G}$ lie in $k\db{\tn{C}_{G}(D)}$.
\end{lem}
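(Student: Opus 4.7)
The plan is to reduce to the classical finite-group version of this result: if $H$ is a finite group and $P$ is a normal $p$-subgroup of $H$, then every central idempotent of $kH$ lies in $k\tn{C}_H(P)$.  I would first either cite or briefly sketch this finite case.  A short argument observes that the kernel of $\tn{Br}_P$ on $kH^P$ is contained in the nilpotent two-sided ideal $I_P\cdot kH$ (where $I_P$ is the augmentation ideal of $kP$; use that for $Q\lneqq P$, $\tn{Tr}_Q^P(x)=\sum_{d\in P/Q}(d-1)xd^{-1}$ lies in $I_P\cdot kH$ because $[P:Q]\equiv 0$ in $k$), which forces $\tn{Br}_P$ to be injective on idempotents.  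Since $\tn{C}_H(P)\trianglelefteq H$, for any central idempotent $e$ the element $f=\tn{Br}_P(e)$ lies in $Z(kH)\cap k\tn{C}_H(P)$, and the central idempotents $e-ef$ and $f-ef$ are both killed by $\tn{Br}_P$, hence both vanish, giving $e=f\in k\tn{C}_H(P)$.

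Now let $e\in Z(k\db{G})$ be a central idempotent and set $C=\tn{C}_G(D)$.  For each open normal subgroup $N$ of $G$, $\varphi_N(e)$ is a central idempotent of $k[G/N]$ and $DN/N$ is a normal $p$-subgroup of $G/N$.  The finite case applied to $G/N$ therefore gives $\varphi_N(e)\in k[\tn{C}_{G/N}(DN/N)]=k[C^{(N)}/N]$, where
$$C^{(N)}:=\{g\in G\,:\,[g,D]\subseteq N\}$$
is a closed subgroup of $G$ containing both $C$ and $N$, and one easily checks $\bigcap_{N\trianglelefteq_{O}G}C^{(N)}=C$.

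To conclude $e\in k\db{C}=\varprojlim_{M}k[CM/M]$, I would fix $M\trianglelefteq_{O}G$ and verify that $\varphi_M(e)\in k[CM/M]$.  For every open normal $N$ of $G$ contained in $M$, $\varphi_M(e)=\varphi_{MN}(\varphi_N(e))$ has support in $\varphi_{MN}(C^{(N)}/N)=C^{(N)}M/M$, so the support of $\varphi_M(e)$ lies in $\bigcap_{N\subseteq M}C^{(N)}M/M$.  The main (modest) obstacle is the identity $\bigcap_{N\subseteq M}C^{(N)}M=CM$, which is a standard compactness argument: given $g$ in the left-hand side, the sets $gM\cap C^{(N)}$ form a descending chain of nonempty closed subsets of the compact coset $gM$, and their nonempty intersection equals $gM\cap C$, so $g\in CM$.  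This shows $\varphi_M(e)\in k[CM/M]$ for every $M\trianglelefteq_{O}G$, whence $e\in k\db{C}$.
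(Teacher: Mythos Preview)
Your proof is correct and follows the same route as the paper: apply the finite-group version at each quotient $G/N$ and pass to the inverse limit.  The paper simply cites the finite case and asserts $e\in\varprojlim_N k[\tn{C}_{G/N}(DN/N)]=k\db{\tn{C}_G(D)}$ in one line, whereas you both sketch the finite argument and justify this last identification via the compactness step $\bigcap_{N\subseteq M}C^{(N)}M=CM$; that extra care is worthwhile, since the identification is not entirely formal.  One minor correction in your finite-case sketch: the displayed equality $\tn{Tr}_Q^P(x)=\sum_{d\in P/Q}(d-1)xd^{-1}$ is off by the term $x\sum_{d\in P/Q}d^{-1}$, but this remainder also lies in $kH\cdot I_P=I_P\cdot kH$ (its augmentation is $[P:Q]\equiv 0$ in $k$), so your conclusion that $\ker(\tn{Br}_P)\subseteq I_P\cdot kH$ is unaffected.
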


\begin{proof}
	Fix an idempotent $e\in \tn{Z}(k\db{G})$.  For each $N\trianglelefteq_{O}G$, $DN/N$ is a normal $p$-subgroup of $G/N$ and $\varphi_{N}(e)$ is a central idempotent of $k[G/N]$.  By the finite version of this result \cite[Proposition 6.2.2]{B}, $\varphi_{N}(e)\in k[\tn{C}_{G/N}(DN/N)]$, and hence 
	$e\in \varprojlim k[\tn{C}_{G/N}(DN/N)] = k\db{\tn{C}_G(D)}.$
\end{proof}

\begin{lem}\label{lem:existb^G}
	Let $D$ be a closed pro-$p$ subgroup of the profinite group $G$ and let $H$ be a closed subgroup of $G$ such that $D\tn{C}_{G}(D)\leqslant H\leqslant \tn{N}_{G}(D)$.  Let $b$ be a block of $H$ with block idempotent $f$ and defect group $D$.  There is a unique block $B$ of $G$  with block idempotent $e$ such that $f=f\cdot \tn{Br}_{D}(e)$.
\end{lem}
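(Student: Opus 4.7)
The plan is to apply the Brauer homomorphism $\tn{Br}_D$ to the identity $1 = \sum_{e \in E} e$, where $E$ is the complete set of block idempotents of $k\db{G}$, then multiply through by $f$ to produce a summable decomposition of $f$ into pairwise orthogonal central idempotents of $k\db{H}$.  Because $f$ is centrally primitive, only one term of the resulting sum can be non-zero, and this delivers both existence and uniqueness of the required $e$.

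The first ingredient I would establish is that $\tn{Br}_D(f) = f$.  Since $H \leqslant \tn{N}_G(D)$ makes $D$ normal in $H$, Lemma \ref{lem:centralidemincentralizer} places the central idempotent $f$ in $k\db{\tn{C}_H(D)}$, which equals $k\db{\tn{C}_G(D)}$ by the hypothesis $\tn{C}_G(D) \leqslant H$.  Under the identification of Lemma \ref{lem:centralizerisomorphictok[[G]]^[D]}, $\tn{Br}_D$ restricts to the identity on this subalgebra, so $\tn{Br}_D(f) = f$.  The second ingredient is that each $\tn{Br}_D(e)$ is central in $k\db{H}$: the idempotent $e$ is $G$-fixed, and the Brauer homomorphism is $\tn{N}_G(D)/D$-equivariant (the ideals $\tn{Tr}_Q^D(k\db{G}^Q)$ being permuted by $\tn{N}_G(D)$-conjugation), so $\tn{Br}_D(e)$ is $\tn{N}_G(D)$-fixed, hence $H$-fixed by conjugation.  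Since $\tn{Br}_D(e) \in k\db{\tn{C}_G(D)} \subseteq k\db{H}$, being $H$-fixed under conjugation is the same as commuting with every element of $H$, so $\tn{Br}_D(e)$ is central in $k\db{H}$.

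Putting these together, continuity of $\tn{Br}_D$ on $k\db{G}^D$ turns the summable identity $1 = \sum_{e \in E}e$ into $1 = \sum_{e \in E} \tn{Br}_D(e)$ in $k\db{\tn{C}_G(D)}$, and multiplying by $f$ yields the summable identity $f = \sum_{e \in E} f\,\tn{Br}_D(e)$ in $k\db{H}$, in which the terms are pairwise orthogonal central idempotents.  Central primitivity of $f$ forces each $f\,\tn{Br}_D(e)$ to be either $0$ or $f$, and since the sum is the non-zero element $f$, exactly one index $e \in E$ satisfies $f\,\tn{Br}_D(e) = f$, giving both existence and uniqueness.  The main subtlety is the centrality argument above, since this is where the assumption $H \leqslant \tn{N}_G(D)$ is genuinely required; the remaining book-keeping with summable sums is routine, relying only on continuity of $\tn{Br}_D$ and on central primitivity of $f$ in the pseudocompact setting.
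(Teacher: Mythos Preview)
Your argument is correct and follows the same route as the paper: both establish $f\in k\db{\tn{C}_G(D)}$ via Lemma~\ref{lem:centralidemincentralizer} and the equality $\tn{C}_H(D)=\tn{C}_G(D)$, then use $\tn{Br}_D(1)=1$ together with orthogonality of the $\tn{Br}_D(e)$ to extract the unique $e$. The paper's proof is terser---it simply says ``because $\tn{Br}_D(1)=1$'' for existence and uses $f=f\,\tn{Br}_D(ee')$ for uniqueness---while you spell out the intermediate step that each $\tn{Br}_D(e)$ lies in $\tn{Z}(k\db{H})$ (via $\tn{N}_G(D)$-equivariance of $\tn{Br}_D$), which is what makes the primitivity argument on $f$ go through; this detail is implicit in the paper's version. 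Note also that neither proof actually uses the hypothesis that $b$ has defect group $D$.
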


\begin{proof}
	By hypothesis $D$ is normal in $H$, and hence $f\in k\db{\tn{C}_H(D)}$ by Lemma \ref{lem:centralidemincentralizer}.  But again by hypothesis $\tn{C}_H(D) = \tn{C}_G(D)$, and hence $f\in k\db{\tn{C}_G(D)}$.
	
	The Brauer homomorphism is a surjective algebra homomorphism $k\db{G}^D\to k\db{\tn{C}_G(D)}$, and hence (because $\tn{Br}_D(1) = 1$) there is at least one block idempotent $e$ such that $f=f\cdot \tn{Br}_{D}(e)$.  If $e'$ is another block idempotent with this property, then
	$$f = f\cdot \tn{Br}_D(e)\cdot\tn{Br}_D(e') = f\cdot \tn{Br}_D(ee'),$$
	which implies that $e=e'$, since distinct block idempotents are orthogonal.
\end{proof}

\begin{defi}\label{def:blockcorrespbyidemp}
	Let $D$ be a closed pro-$p$ subgroup of $G$ and $H$ a closed subgroup of $G$ with $D\tn{C}_{G}(D)\leq H\leq \tn{N}_{G}(D)$. If $b$ is a block of $H$ with block idempotent $f$, define the \emph{Brauer correspondent} $b^{G}$ of $b$ to be the unique block of $G$ with block idempotent $e$ such that  $f=f\cdot \tn{Br}_{D}(e)$. 
\end{defi}

We may finally prove our main theorem -- Brauer's first main theorem for profinite groups. 


\begin{teo}\label{teo:Brauercorrespforprofinitegroups}
	Let $G$ be a profinite group, $P$ a $p$-Sylow subgroup of $G$ and $D$ an open subgroup of $P$. There is a bijective correspondence between the blocks of $\tn{N}_{G}(D)$ with defect group $D$ and the blocks of $G$ with defect group $D$, given by sending the block $b$ of $\tn{N}_{G}(D)$ with defect group $D$ to its Brauer correspondent $b^G$.	
\end{teo}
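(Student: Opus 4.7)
The plan is to derive the profinite Brauer correspondence from its finite analogue by applying the finite case to each sufficiently fine quotient $G/N$ and assembling the results through the inverse system of Proposition \ref{prop:invlimblocks}. The central identity I aim to establish is $\tn{Br}_D(e_B) = e_b$, whenever $B$ is a block of $G$ with defect $D$ and $b$ is its Brauer correspondent, in complete analogy with the finite case.

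Given a block $B$ of $G$ with defect group $D$ and block idempotent $e_B$, I would first restrict to a cofinal system $\mathcal{N}$ of open normal subgroups $N$ of $G$ such that (i) $D$ is a $p$-Sylow subgroup of $DN$, so that Frattini's argument yields $\tn{N}_{G/N}(DN/N) = \tn{N}_G(D)N/N$, and (ii) the finite block $X_N$ has defect group $DN/N$, which is possible by Corollary \ref{cor:defgrpsG/N-blocksX_N}. For each such $N$, the finite Brauer first main theorem produces a unique block $Y_N$ of $\tn{N}_G(D)N/N$ with defect $DN/N$ whose block idempotent is $e_{Y_N} = \tn{Br}_{DN/N}(\varphi_N(e_B))$. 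The commutation $\varphi_N \tn{Br}_D = \tn{Br}_{DN/N}\varphi_N$ established inside the proof of Lemma \ref{lem:Brdiagramcommutes} guarantees that the $e_{Y_N}$ form a compatible family in the sense of Remark \ref{obs:directsystemofcentralprimidemp}, so Proposition \ref{prop:invlimblocks} assembles the $Y_N$ into a block $b$ of $\tn{N}_G(D)$ with $e_b = \tn{Br}_D(e_B)$. Corollary \ref{cor:defgrpsG/N-blocksX_N} applied now to $\tn{N}_G(D)$ shows $b$ has defect group exactly $D$, and the identity $e_b \cdot \tn{Br}_D(e_B) = e_b^2 = e_b$ together with the uniqueness clause of Lemma \ref{lem:existb^G} yields $b^G = B$.

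This construction gives surjectivity of $b \mapsto b^G$ onto the blocks of $G$ with defect $D$. Injectivity follows because any block $b'$ of $\tn{N}_G(D)$ with defect $D$ satisfying $(b')^G = B$ must have, at each finite level, block idempotents matching the $e_{Y_N}$ by the uniqueness part of the finite Brauer correspondence, forcing $b' = b$. Well-definedness in the opposite direction --- that the map $b \mapsto b^G$ sends a defect-$D$ block of $\tn{N}_G(D)$ to a defect-$D$ block of $G$ --- follows by running the same construction with the roles of $G$ and $\tn{N}_G(D)$ interchanged at each finite level.

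The main obstacle is verifying that the finite Brauer correspondences at the quotients $G/N$ are compatible with the direct system of Remark \ref{obs:directsystemofcentralprimidemp}, which is precisely where Lemma \ref{lem:Brdiagramcommutes} becomes indispensable: the scaling factor in the right-hand vertical map of that diagram, and the need to work within the carefully chosen cofinal system $\mathcal{N}$, reflect the fundamental reason the hypothesis that $D$ be open in a $p$-Sylow subgroup --- equivalently, $|P:D|_p < \infty$ --- is essential, matching the failure of the correspondence described in Remark \ref{Remark:BrauerCorrFails}.
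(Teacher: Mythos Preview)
Your inverse-limit strategy is a natural idea and genuinely different from the paper's proof, which works entirely at the profinite level using the modified trace $\widetilde{\tn{Tr}}$ and the commutative square of Lemma~\ref{lem:Brdiagramcommutes} together with Theorem~\ref{theorem:CompleteDefectGroupChar}, never invoking the finite Brauer theorem at all.  However, there is a genuine gap in your execution.

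The step that fails is the assertion that $e_{Y_N} = \tn{Br}_{DN/N}(\varphi_N(e_B))$ is a \emph{primitive} central idempotent of $k[\tn{N}_G(D)N/N]$.  The finite Brauer theorem applies to a single block of $G/N$, and gives $e_{Y_N} = \tn{Br}_{DN/N}(e_{X_N})$.  But $\varphi_N(e_B)$ is in general not equal to $e_{X_N}$: it is a central idempotent of $k[G/N]$, hence a sum $\sum_i c_i$ of block idempotents, only one of which is $e_{X_N}$ (this is exactly why Proposition~\ref{prop:invlimblocks} has to project $B_N$ onto $X_N$).  Since $B$ is $\delta(D)$-projective, every $c_i$ has defect group inside a conjugate of $DN/N$, but several of them may have defect group exactly $DN/N$; for each such $c_i$, the element $\tn{Br}_{DN/N}(c_i)$ is a nonzero block idempotent of $\tn{N}_{G/N}(DN/N)$.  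Thus $\tn{Br}_{DN/N}(\varphi_N(e_B))$ is a sum of possibly many block idempotents, and your identity $e_b = \tn{Br}_D(e_B)$ is left unproved.  Note that this identity is precisely the sharp form of the correspondence --- knowing it \emph{is} knowing that the map $b\mapsto b^G$ is injective --- so you cannot assume it as an intermediate step.

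You could try to repair the argument by working instead with the genuine finite correspondents $e_{Y_N} := \tn{Br}_{DN/N}(e_{X_N})$, and your compatibility check (via $\varphi_{MN}\tn{Br}_{DN/N}=\tn{Br}_{DM/M}\varphi_{MN}$ and $\varphi_{MN}(e_{X_N})e_{X_M}=e_{X_M}$) then does go through.  This produces a block $b$ of $\tn{N}_G(D)$ with defect group $D$.  But now you must show $b^G = B$ using only $e_b\cdot\tn{Br}_D(e_B) = e_b$ (which follows), and you must separately establish injectivity and well-definedness of $b\mapsto b^G$.  Your sketch for these (``matching at each finite level'' and ``interchanging roles'') does not work directly, because the relation $(b')^G = B$ is defined via $e_{b'}\cdot\tn{Br}_D(e_B) = e_{b'}$ at the profinite level and does not immediately descend to the statement that the finite block $Y'_N$ associated to $b'$ is the finite Brauer correspondent of $X_N$: again, $\varphi_N(e_{b'})$ and $\varphi_N(e_B)$ need not be primitive.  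Making this descent precise requires an argument of roughly the same weight as the paper's direct proof.
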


\begin{proof}
	Given the results already obtained, we are in a position to proceed as with finite groups.  We mostly follow \cite[Theorem 12.6.4]{Web}.
	
	By Lemma \ref{lem:centralidemincentralizer}, the block idempotents of $k\db{\tn{N}_{G}(D)}$ lie in $k\db{\tn{C}_{\tn{N}_{G}(D)}(D)} = k\db{\tn{C}_{G}(D)}$.
	
	Let $b$ be a block of $\tn{N}_{G}(D)$ with block idempotent $f$ and defect group $D$.  By Lemma \ref{lem:existb^G} the Brauer correspondent $b^G$ exists.  Let $e$ be the block idempotent of $b^G$, so that $f = f\cdot \tn{Br}_D(e)$ by definition.  Since $\tn{Br}_D(e)\neq 0$, $b^G$ has a defect group $R$ containing $D$ by Theorem \ref{theorem:CompleteDefectGroupChar}.  We claim that $R=D$.
	
	The subspace $e\widetilde{\tn{Tr}}_{D}^{G}(k\db{G}^{D})$ is an ideal of $e\tn{Z}(k\db{G})$ by Part 3 of Lemma \ref{Lemma tildeTr properties}.  By Lemma \ref{lem:Brdiagramcommutes} and since $\tn{Br}_{D}$ is an algebra homomorphism,
	$$\tn{Br}_{D}\left(e\widetilde{\tn{Tr}}_{D}^{G}(k\db{G}^{D})\right)=\tn{Br}_{D}(e)\tn{Br}_{D}\left(\widetilde{\tn{Tr}}_{D}^{G}(k\db{G}^{D})\right)=\tn{Br}_{D}(e)\widetilde{\tn{Tr}}_{D}^{\tn{N}_{G}(D)}(k\db{\tn{C}_{G}(D)}),$$
	which contains $f$.  Note that $f\not\in J(k\db{C_{G}(D)})$, since if it were then $\varphi_N(f)$ would be a non-zero idempotent in $J(k\db{C_{G}(D)}_N)$ for some $N$, which is impossible since the radical of a finite dimensional algebra is nilpotent.  Hence $\tn{Br}_{D}\left(e\widetilde{\tn{Tr}}_{D}^{G}(k\db{G}^{D})\right)\not\subseteq J(k\db{C_{G}(D)})$.  It follows that $e\widetilde{\tn{Tr}}_{D}^{G}(k\db{G}^{D})\not\subseteq J(e\tn{Z}(k\db{G}))$.  But the idempotent $e$ is primitive in $\tn{Z}(k\db{G})$ so that $e\tn{Z}(k\db{G})$ is local by Lemma \ref{lem:Roslem}, and hence $e\tn{Z}(k\db{G}) = e\widetilde{\tn{Tr}}_{D}^{G}(k\db{G}^{D})=\widetilde{\tn{Tr}}_{D}^{G}(ek\db{G}^{D})$.  In particular $e\in \tn{Im}(\widetilde{\tn{Tr}}_{D}^{G})$, showing that $R$ is contained in $D$ by Theorem \ref{theorem:CompleteDefectGroupChar}.  So $R=D$ as claimed.
	
	For convenience denote by $\Phi$ the map from the blocks of $N_G(D)$ with defect group $D$ to the blocks of $G$ with defect group $D$, sending the block $b$ to its Brauer correspondent $b^G$. The argument above shows that $\Phi$ is well-defined, so it remains to check that it is bijective.
	
	Suppose that ${b_1}^G = {b_2}^G$ has block idempotent $e$.  Then, letting $f_i$ be the block idempotent of $b_i$ we have
	$$f_i = f_i\tn{Br}_D(e) \in \tn{Br}_D(e\tn{Z}(k\db{G})).$$
	But $\tn{Br}_D(e\tn{Z}(k\db{G}))$ is local and hence $f_1=f_2$, showing that $\Phi$ is injective.
	
	Let $B$ be a block of $G$ with defect group $D$ and block idempotent $e$, so that $\tn{Br}_{D}(e)\neq 0$ and $e\in \widetilde{\tn{Tr}}_{D}^{G}(k\db{G}^{D})$.  Let $f$ be a block idempotent of $k\db{\tn{N}_G(D)}$ such that $f\cdot\tn{Br}_D(e) = f$.  If $b = k\db{\tn{N}_G(D)}f$ has defect group $D$, then $B = \Phi(b)$ and $\Phi$ is surjective.  It thus remains to check that $b$ has defect group $D$.  By Lemma \ref{lem:Brdiagramcommutes}, $\tn{Br}_D(e) = \widetilde{\tn{Tr}}_D^{\tn{N}_G(D)}(a)$ for some $a$, so that by Lemma \ref{Lemma tildeTr properties},
	$$f = f\cdot \widetilde{\tn{Tr}}_D^{\tn{N}_G(D)}(a)
	= \widetilde{\tn{Tr}}_D^{\tn{N}_G(D)}(fa)$$
	and hence $b$ has a defect group $R$ contained in $D$ by Theorem \ref{theorem:CompleteDefectGroupChar}.  On the other hand, $R$ is an intersection of $p$-Sylow subgroups of $\tn{N}_G(D)$ by Proposition \ref{prop:defectgroupsintersectionofsylows}, while $D$ is the intersection of all the $p$-Sylow subgroups of $\tn{N}_G(D)$ by Corollary \ref{cor:defectgrpmaximalinnormalizer}.  Hence $D\leqslant R$ and we are done.
\end{proof}

\begin{cor}
	Let $G$ be a profinite group, $P$ a $p$-Sylow subgroup of $G$, $D$ an open subgroup of $P$ and $L$ a closed subgroup of $G$ containing $\tn{N}_G(D)$.  There is a canonical bijection between the blocks of $G$ with defect group $D$ and the blocks of $L$ with defect group $D$.
\end{cor}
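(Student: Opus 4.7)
The natural strategy is to apply Theorem \ref{teo:Brauercorrespforprofinitegroups} twice, using $\tn{N}_G(D)$ as the common middle group, and then compose the resulting bijections.

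First I would check that the hypothesis of Theorem \ref{teo:Brauercorrespforprofinitegroups} applies with $L$ in place of $G$, i.e.\ that $D$ is open in a $p$-Sylow subgroup of $L$.  The key observation is that if $P_1,P_2$ are two $p$-Sylow subgroups of $G$ each containing $D$, then $|P_1:D| = |P_2:D|$ as supernatural numbers: indeed $|G:P_1|=|G:P_2|$ (the common $p'$-part of $|G|$) so multiplicativity of the index gives $|G:P_1|\cdot|P_1:D|=|G:D|=|G:P_2|\cdot|P_2:D|$.  Hence $D$ is open in \emph{every} $p$-Sylow of $G$ containing it.  Now let $P_L$ be a $p$-Sylow subgroup of $L$ containing $D$; since $P_L$ is a closed pro-$p$ subgroup of $G$ it is contained in some $p$-Sylow $Q$ of $G$, and $D$ is open in $Q$ by the previous sentence, so $D$ is open in $P_L$ as required.

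Next I would record the easy identity $\tn{N}_L(D) = L\cap \tn{N}_G(D) = \tn{N}_G(D)$, which holds because $\tn{N}_G(D)\subseteq L$ by hypothesis.  Applying Theorem \ref{teo:Brauercorrespforprofinitegroups} to $G$ yields a bijection $\Phi_G$ between the blocks of $G$ with defect group $D$ and the blocks of $\tn{N}_G(D)$ with defect group $D$; applying the same theorem to $L$ (using the $p$-Sylow $P_L$ above) yields a bijection $\Phi_L$ between the blocks of $L$ with defect group $D$ and the blocks of $\tn{N}_L(D)=\tn{N}_G(D)$ with defect group $D$.  The composition $\Phi_L^{-1}\circ \Phi_G$ is the desired canonical bijection; concretely, it sends a block $B$ of $G$ with defect group $D$ to the unique block of $L$ with defect group $D$ having the same Brauer correspondent in $\tn{N}_G(D)$ as $B$.

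I do not anticipate any real obstacle beyond the $p$-Sylow verification in the first paragraph; once that is in hand, the result is a direct double application of Brauer's first main theorem.
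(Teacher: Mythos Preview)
Your proposal is correct and follows exactly the paper's approach: observe that $\tn{N}_L(D)=\tn{N}_G(D)$ and apply Theorem \ref{teo:Brauercorrespforprofinitegroups} once to $G$ and once to $L$, composing the resulting bijections. You are in fact more careful than the paper's one-line proof, since you explicitly verify that $D$ is open in a $p$-Sylow subgroup of $L$ (a hypothesis the paper leaves implicit).
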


\begin{proof}
	Since $\tn{N}_G(D) = \tn{N}_L(D)$, Theorem \ref{teo:Brauercorrespforprofinitegroups} applies for both $G$ and $L$, yielding the result. 
\end{proof}

\begin{rem}\label{Remark:BrauerCorrFails}
	Given that a defect group is open in a $p$-Sylow subgroup of $G$ that contains it by Proposition \ref{prop:defgrpopen}, Theorem \ref{teo:Brauercorrespforprofinitegroups} applies to arbitrary blocks of $G$.  It is never-the-less important to note that Theorem \ref{teo:Brauercorrespforprofinitegroups} is false when we replace the word ``open'' in its statement with the word ``closed'':  Let $G$ be the free pro-$p$ group of rank $2$, freely generated by $x$ and $y$.  If $D$ is the closed subgroup generated by $x$, then $D = \tn{N}_G(D)$.  There is of course a block of $k\db{\tn{N}_G(D)}$ with defect group $D$ (namely $k\db{\tn{N}_G(D)}$ itself), but there is not a block of $k\db{G}$ with defect group $D$ ($k\db{G}$ has only one block and its defect group is $G$). 
\end{rem}

\bibliography{refpap}

\end{document}